\documentclass[english,reqno,11pt]{amsart}
\usepackage{amsmath, amsthm, amsfonts}
\setcounter{tocdepth}{3}
\usepackage{hyperref} 
\usepackage{hyperref}
\hypersetup{
	colorlinks=true,
		citecolor=blue!60!black,
		linkcolor=red!60!black,
		urlcolor=green!40!black,
		filecolor=yellow!50!black,
	breaklinks=true,
	pdfpagemode=UseNone,
	bookmarksopen=false,
}
\usepackage{amsmath,amsthm,amssymb}
\usepackage{amscd,indentfirst,epsfig}
\usepackage{latexsym}
\usepackage{times}
\usepackage{enumerate}
\usepackage{mathrsfs}
\usepackage{stmaryrd}
\usepackage{amsopn}
\usepackage{amsmath}
\usepackage{amssymb,dsfont,mathtools}
\usepackage{amsfonts,bm}
\usepackage{amsbsy,amsmath}
\usepackage{amscd}
\usepackage{xcolor}

\linespread{1.1}

\usepackage[mono=false]{libertine}
\usepackage[T1]{fontenc}
\usepackage{amsthm}
\usepackage[normalem]{ulem} 
\usepackage[cal=euler, scr=boondoxo]{}
\usepackage{microtype}

\usepackage{numprint}

\hsize=126mm \vsize=180mm
\parindent=5mm
\setlength{\oddsidemargin}{.5cm} \setlength{\evensidemargin}{.5cm}
\setlength{\textwidth}{15.0cm} \setlength{\textheight}{21.2cm}

\begin{document}

\makeatletter
\def \leq {\leqslant}
\def \le {\leq}
\def \geq {\geqslant}
\def\e{\varepsilon}
\def \ge {\geq}
\def\tend{\rightarrow}
\def\R{\mathbb R}
\def\S{\mathbb S}
\def\Z{\mathbb Z}
\def\N{\mathbb N}
\def\C{\mathscr{C}}
\def\D{\mathcal D}
\def\T{\mathcal T}
\def\E{\mathcal E}
\def\s{\sigma}
\def\k{\kappa}
\def \a{\beta}
 \def\be{\beta}
\def\t{\theta}
\def\l{\ell}
\def \M {\mathcal{M}}
\def\L{\Lambda}
\def\O{\Omega}
\def\g{\gamma}
\def \ds {\displaystyle}
\def\G{\Gamma}
\def \mM {\mathfrak{m}}
\def\f{\varphi}
\def\o{\omega} 
\def \d {\mathrm{d}}
\def \lm {\bm{m}}
\def \lM {\mathds{M}}
\def \lD {\mathds{D}}
\def\U{\Upsilon}
\def\z{\zeta}
\def \Q {\mathcal{Q}}
\def\over{\bm}
\def\b{\backslash}
\def\fet{f_{\ast}}
\def \get{g_{\ast}}
\def\fprim{f^{\prime}}
\def\fprimet{f^{\prime}_\ast }
\def\vet{v_{\ast}}
\def  \pa {\partial}
\def \var {\dd}
\def\vprim{v^{\prime}}
\def\vprimet{v^{\prime}_{\ast}}
\def\grad{\nabla}

\def\Log{\textrm{Log }}
\newtheorem{theo}{Theorem}[section]
\newtheorem{prop}[theo]{Proposition}
\newtheorem{cor}[theo]{Corollary}
\newtheorem{lem}[theo]{Lemma}
\newtheorem{hyp}[theo]{Assumptions}
\newtheorem{defi}[theo]{Definition}
\newtheorem{rmq}[theo]{Remark}
\def \vr {\vartheta}
\def \up {\Upsilon}
\DeclarePairedDelimiter\lnorm{\llbracket}{\rrbracket}		

\def \dd {\bm{\varepsilon}}

\def \ind {\mathbf{1}}
\numberwithin{equation}{section}
 
\title[The Landau equation with moderate soft potentials]{The Landau equation with moderate soft potentials: An approach using $\varepsilon$-Poincar\'e inequality and Lorentz spaces}

\author{R. Alonso}
\address{Texas A\&M University at Qatar, Division of Arts and Sciences, Education City--Doha, Qatar.}
\email{ricardo.alonso@qatar.tamu.edu}

\author{V. Bagland}
\address{Universit\'{e} Clermont Auvergne, LMBP, UMR 6620 - CNRS,  Campus des C\'ezeaux, 3, place Vasarely, TSA 60026, CS 60026, F-63178 Aubi\`ere Cedex,
France.}\email{Veronique.Bagland@uca.fr}

\author{B.  Lods}
\address{Universit\`{a} degli
Studi di Torino \& Collegio Carlo Alberto, Department of Economics, Social Sciences, Applied Mathematics and Statistics ``ESOMAS'', Corso Unione Sovietica, 218/bis, 10134 Torino, Italy.}\email{bertrand.lods@unito.it}

\maketitle
 
\begin{abstract} This document presents an elementary approach using $\varepsilon$-Poincar\'e inequality to prove generation of $L^{p}$-bounds, $p\in(1,\infty)$, for the homogeneous Landau equation with moderate soft potentials $\gamma\in[-2,0)$.  The critical case $\gamma=-2$ uses an interpolation approach in the realm of Lorentz spaces and entropy.  Alternatively, a direct approach using the Hardy-Littlewood-Sobolev (HLS) inequality and entropy is also presented.   On this basis, the generation of pointwise bounds $p=\infty$ is deduced from a De Giorgi argument.\\

\noindent
\textsc{Keywords:} Landau equation, $\e$-Poincar\'e inequality, De Giorgi argument,  Lorentz spaces, Entropy.\end{abstract}


\section{Introduction}
The spatially homogeneous Landau equation describes the evolution of a density distribution $f:=f(t,v)\geq0$ of a plasma of particles having velocity $v\in \R^d$ at a time $t>0$.  It is given by
\begin{equation}\label{LFD1}
\partial_{t} f(t,v) =  \Q(f,f)\,,
\qquad t\geq0, \quad v \in \R^{d}\,,
\end{equation}
where the collision operator is given by
\begin{equation}\label{eq:Landau}
\Q(f,f)(v):= {\grad}_v \cdot \int_{\R^{d}} |v-\vet|^{\g+2} \, \Pi(v-\vet) 
\Big\{ \fet \grad f - f {\grad f}_\ast \Big\}
\, \d\vet\, ,
\end{equation}
with the usual shorthand $f:= f(v)$, $f_* := f(v_*)$ and $\Pi(z)=\mathrm{Id}-\frac{z \otimes z}{|z|^{2}}$.  The Landau equation \eqref{LFD1} is supplemented with initial condition
\begin{equation}\label{eq:Init}
f(t=0,v)=f_{\mathrm{in}}(v), \qquad v \in \R^{d}.
\end{equation}
Results regarding \textit{a priori} energy estimates involving $L^{p}$-norms, mainly in the moderately soft potential case $- 2 \leq \gamma < 0$, are revisited.

\smallskip
\noindent
The spirit of this paper combines techniques of \cite{Wu} and \cite{GG} bringing a more direct proof and improved estimates of those of \cite{Wu} which include generation of norms with specific rates and better long time behaviour.  It also includes the pointwise estimation, the case $p=\infty$, of solutions for both norm creation and propagation using a De Giorgi's type of approach, see \cite{degiorgi} for an original implementation in the classical parabolic context.  In the kinetic context of the spatially homogeneous Boltzmann equation refer to \cite{ricardo} and for the homogeneous Landau-Fermi-Dirac to \cite{ABDL1}.  The main tools used are the $\varepsilon$-Poincar\'e inequality and an interesting interpolation in Lorentz spaces that deal with the critical case of $\gamma=-2$.  Lorentz interpolation is used to circumvent the problematic application of the HLS inequality in $L^1-L^p$ case.  As noticed in \cite{Wu}, entropy propagation can be added to the argument to properly control the size of the most singular term.  In addition, we present an alternative proof to the aforementioned approach using Hardy-Littlewood-Sobolev inequality in a way that avoids the $L^1-L^{p}$ case.  This later approach involves fewer interpolation steps giving explicit constants in the control of the most singular term.  Entropy is also needed to control the size of such term. 

\smallskip
\noindent
We mention that an alternative approach to obtain pointwise bounds is developed in \cite{silvestre} using techniques for parabolic equations in non-divergence form.  The technique is used in the context of classical solutions of the equation using a quantitative maximum principle approach in the spirit of Aleksandrov-Bakelman-Pucci.  The estimates obtained using this method are quite sharp, yet solutions are assumed in the classical sense.  This is an important difference with the spirit of the techniques brought here that work in the realm of weak solutions which presumably simplify the rigorous implementation of an approximation scheme for the equation.

\subsection{Notations} Define the Lebesgue space $L^{p}_{s}(\R^{d})$, with $s \in \R $ and $ p\in[1,\infty)$, through the norm
$$
\displaystyle \|f\|_{L^p_{s}} := \left(\int_{\R^{d}} \big|f(v)\big|^p \, 
\langle v\rangle^s \, \d v\right)^{\frac{1}{p}}, \qquad L^{p}_{s}(\R^{d}) :=\Big\{f\::\R^{d} \to \R\;;\,\|f\|_{L^{p}_{s}} < \infty\Big\}\,,
$$
where $\langle v\rangle :=\sqrt{1+|v|^{2}}$. 


%
\begin{defi} \label{defi:fin}
We say that $f \in \mathcal{Y}_{0}(f_{\mathrm{in}})$ if $f\in L^{1}_{2}(\R^{d})$ and 
\begin{equation}\label{eq0}
\int_{\R^{d}}f(v)\left(\begin{array}{c}1 \\v \\ |v|^{2}\end{array}\right)\d v=\int_{\R^{d}}f_{\mathrm{in}}(v)\left(\begin{array}{c}1 \\v \\ |v|^{2}\end{array}\right)\d v=:\left(\begin{array}{c}\varrho  \\ \varrho u \\ d\varrho \theta +\varrho|u|^{2}\end{array}\right),
\end{equation}
and $H(f) \leq H(f_{\mathrm{in}})$
where
$$
H(f)=\int_{\R^{d}}f(v)\log f(v)\d v\,.
$$
\end{defi}
\noindent
By a simple scaling argument, there is no loss in generality to assume that 
\begin{equation}\label{eq:Mass}
\varrho =\theta =1, \qquad u=0\,.
\end{equation}
We introduce for $i,j\in\{1,\cdots,d\}$
\begin{equation*}
\left\{
\begin{array}{rcl}
a(z) & = & \left(a_{i,j}(z)\right)_{i,j} \quad \mbox{ with }
\quad a_{i,j}(z) 
= |z|^{\g+2} \,\left( \delta_{i,j} -\frac{z_i  z_j}{|z|^2} \right),\medskip\\
 b_i(z) & = & \sum_k \partial_k a_{i,k}(z) = -(d-1) \,z_i \, |z|^\g,  \medskip\\
 c(z) & = & \sum_{k,l} \partial^2_{kl} a_{k,l}(z) = -(d-1) \,(\g+d) \, |z|^\g. \\
\end{array}\right.
\end{equation*}
For any $f \in L^{1}_{2+\g}(\R^{d})$, we define the matrix-valued mapping  $\mathcal{A}[f]$ by
$$
\mathcal{A}[f]=\big(\mathcal{A}_{ij}[f]\big)_{ij}:=\big(a_{ij}\ast f\big)_{ij}\,.
$$
In the same way, define the vector-valued mapping $\bm{b}[f]$ and the scalar mapping $\bm{c}_{\g}[f]$ given by 
$$
\bm{b}[f] = \big(\bm{b}_{i}[f]\big)_i = \big(b_{i} \ast f\big)_i\,, \quad\text{and}\quad \bm{c}_{\g}[f](v)=\big(c \ast f\big)(v)\,.
$$
The dependency with respect to the parameter $\g$ in $\bm{c}_{\g}[f]$ is stressed since, in several places, we apply the same definition with $\g+1$ replacing $\g$.

\smallskip
\noindent
With these notations, the Landau equation can then be written alternatively under the form
\begin{equation}\label{LFD}
\left\{
\begin{array}{ccl}
\;\partial_{t} f &= &\grad \cdot \big(\,\mathcal{A}[f]\, \grad f
- \bm{b}[f]\, f\big)=\mathrm{Tr}\left(\mathcal{A}[f] D^{2}f\right) - \bm{c}_{\g}[f]f , \medskip\\
\;f(t=0)&=&f_{\mathrm{in}}\,.
\end{array}\right.
\end{equation}
where $D^{2}f$ is the Hessian matrix of $f$ (for two matrices $A=(A_{i,j})$, $B=(B_{i,j})$, $\mathrm{Tr}(AB)=\sum_{i,j}A_{i,j}B_{j,i}$).\\  

Let us make precise the notion of weak solution we consider in this paper.
\begin{defi}
 Consider $-2\le \g<0$. Let $f_{\mathrm{in}}\in L^1_2(\R^d)\cap L\log L(\R^d)$. We say that $f$ is a weak solution to the Cauchy problem \eqref{LFD} if the following conditions are fullfilled:
  \begin{enumerate}
  \item $f\ge 0$ with $f\in\C([0,\infty);\D'(\R^d))\cap L^\infty ([0,\infty); L^1_2\cap L\log L(\R^d))$.
  \item For any $t\ge 0$ $$\int_{\R^d} f(t,v)\varphi(v)\,\d v=\int_{\R^d} f_{\mathrm{in}}(v)\varphi(v)\,\d v \qquad \mbox{ for }\quad \varphi(v)=\big\{ 1,v,|v|^2 \big\} $$
    and $$\int_{\R^d} f(t,v)\log f(t,v) \,\d v\le \int_{\R^d} f_{\mathrm{in}}(v)\log f_{\mathrm{in}}(v)\, \d v\,.$$
  \item For any $\varphi\in \C^1( [0,\infty);\C^\infty_{c}(\R^d))$ and $t\ge 0$,
    \begin{multline*}
      \int_{\R^d} f(t,v)\varphi(t,v) \,\d v= \int_{\R^d} f_{\mathrm{in}}(v) \varphi(0,v) \,\d v +\int_0^t \int_{\R^d} f(\tau,v)\partial_t\varphi(\tau,v) \, dv \,\d\tau \\
      + \frac12 \sum_{i,j=1}^d \int_0^t \d\tau\int_{\R^d\times \R^d} f(\tau,v) f(\tau,\vet)\, a_{i,j}(v-\vet)\, \big[\partial^2_{i,j} \varphi(\tau,v) +\partial^2_{i,j} \varphi(\tau,\vet) \big] \,\d v \,\d \vet \\
      +\sum_{i=1}^d \int_0^t \d\tau\int_{\R^d\times \R^d} f(\tau,v) f(\tau,\vet) \, b_{i}(v-\vet)\,\big[\partial_{i} \varphi(\tau,v) -\partial_{i} \varphi(\tau,\vet) \big] \,\d v \,\d \vet\,.
      \end{multline*}
  \end{enumerate} 
\end{defi}
In the case of moderately soft potentials $\g\in[-2,0)$, C. Villani proved in \cite{villH} the existence of global weak solutions to \eqref{LFD}. Some useful properties of these solutions are gathered in Appendix~\ref{sec:known}.

\subsection{The role of the $\e$-Poincar\'e inequality}

In the work \cite{GG}, the role played in showing $L^{p}$ \textit{a priori} bounds by a variant of the so-called Poincar\'e inequality has been emphasised. It is common, to derive energy estimates for solution to \eqref{LFD}, to investigate for smooth functions $f=f(v)$ the properties of the inner product 
$$
-\langle f,\Q(f)\rangle= - \int_{\R^{d}}f\Q(f)\d v=\int_{\R^{d}}\mathcal{A}[f]\,\nabla f\,\cdot \nabla f \d v+  {\frac12}\int_{\R^{d}}\bm{c}_{\g}[f]\,f^{2}\d v\,.
$$
The function $-\bm{c}_{\g}[f] \geq0$ acts as a potential interacting with the diffusive part involving $\mathcal{A}[f]$. In particular, the coercivity of $-\Q(f)$ 
amounts to 
$$
\int_{\R^{d}} \mathcal{A}[f]\,\nabla f\cdot\nabla f\d v \geq - {\frac12} \int_{\R^{d}}\bm{c}_{\g}[f]\,f^{2}\d v.
$$
A weaker inequality would be enough to deduce a control of the growth of solutions to \eqref{LFD}, say
$$
\int_{\R^{d}}\mathcal{A}[f]\,\nabla f\cdot\nabla f\d v + C\,\int_{\R^{d}}f^{2}\d v \geq - {\frac12} \int_{\R^{d}}\bm{c}_{\g}[f]\,f^{2}\d v.
$$
A refined version of such inequality has been obtained for $-2 < \g <0$ in \cite{GG} showing that, for smooth $\phi$ and $f \in \mathcal{Y}_{0}(f_{\rm in})$, for any $\e \in (0,1)$ there exist $\Lambda(\e) >0$ such that
\begin{equation}\label{eq:epoincare}
-\int_{\R^{d}}\bm{c}_{\g}[f]\,\phi^{2}\d v \leq \e \int_{\R^{d}}\mathcal{A}[f]\nabla \phi\cdot\nabla \phi \d v + \Lambda(\e)\int_{\R^{d}}\phi^{2}\d v.
\end{equation}
The proof of \eqref{eq:epoincare} in \cite{GG} uses tools from harmonic analysis, namely Muckenhoupt $A_{p}$-weights.  
We present here a path to show inequality \eqref{eq:epoincare} using classical estimates of the Riesz potential in Lorentz spaces deriving a version that holds up to the critical case $\g=-2$, see Proposition \ref{propo:GG2}.  In fact, we actually provide an alternative proof of the inequality using Hardy-Littlewood-Sobolev inequality.  In both procedures the key is to find a way around the problematic $L^{1}-L^{p}$ case to the HLS inequality and use entropy to control the size of the most singular term.

\smallskip
\noindent 

\subsection{Main results}
The following version of the $\e$-Poincar\'e inequality holds for weak solutions to \eqref{LFD}.
\begin{theo}\label{cor:MAIN} Let $-2\le\g<0$ and $T>0$ and a nonnegative initial datum $f_{\rm in} \in L^{1}_{2} (\R^{d})\cap L\log L(\R^{d})$ be given satisfying \eqref{eq:Mass} and consider any  {global weak solution} $f(t,v)$ to \eqref{LFD} with initial datum $f_{\rm in}$. 
  Then, for any $\e>0$,  there exists $C >0$ such that for any smooth function $\phi$,
\begin{equation}\label{eq:poin0}
-\int_{\R^{d}}\phi^{2}\,\bm{c}_{\g}\big[ f(t) \big]\,\d v \leq \e\,\int_{\R^{d}}\left|\nabla \left(\langle v\rangle^{\frac{\g}{2}}\phi(v)\right)\right|^{2}\d v \\
+C\int_{\R^{d}}\phi^{2}\langle v\rangle^{\g}\d v, \quad \forall t\in[0,T]\,,
\end{equation}
where $C$ depends on $\e$, $\g$, $f_{\rm in}$ and $T$. 
Moreover, if $\g\in(-2,0)$, one has $C=C_0\left(1+\e^{\frac{\g}{2+\g}}\right)$ where $C_0$ only depends on $\|f_{\rm in}\|_{L^{1}_{2}}$.
\end{theo}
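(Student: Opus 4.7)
The plan is to substitute $\psi(v):=\langle v\rangle^{\gamma/2}\phi(v)$, which turns the right-hand side of \eqref{eq:poin0} into $\varepsilon\|\nabla\psi\|_{L^{2}}^{2}+C\|\psi\|_{L^{2}}^{2}$, while the left-hand side becomes
$$
(d-1)(\gamma+d)\int_{\R^{d}}\psi(v)^{2}\,\langle v\rangle^{-\gamma}\bigl(|\cdot|^{\gamma}\ast f(t)\bigr)(v)\,\d v.
$$
Writing $G_{t}(v):=\langle v\rangle^{-\gamma}\bigl(|\cdot|^{\gamma}\ast f(t)\bigr)(v)$, the target reduces to the Schr\"odinger-type bound $\int \psi^{2}G_{t}\,\d v\leq \varepsilon\|\nabla\psi\|_{L^{2}}^{2}+C\|\psi\|_{L^{2}}^{2}$, which, by H\"older's inequality and the Sobolev embedding $H^{1}(\R^{d})\hookrightarrow L^{2d/(d-2)}(\R^{d})$, would follow from a decomposition $G_{t}=G_{t}^{\sharp}+G_{t}^{\flat}$ with $\|G_{t}^{\sharp}\|_{L^{d/2}}\leq \varepsilon/C_{S}$ and $\|G_{t}^{\flat}\|_{L^{\infty}}\leq C(\varepsilon)$, uniformly in $t\in[0,T]$.

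First, I would verify that $f(t,\cdot)\in\mathcal{Y}_{0}(f_{\rm in})$ uniformly in $t$: by the weak-solution definition, conservation of mass, momentum, and energy gives $\|f(t)\|_{L^{1}_{2}}=\|f_{\rm in}\|_{L^{1}_{2}}$ for all $t\ge 0$, and the entropy inequality yields $H(f(t))\leq H(f_{\rm in})$. This connects Theorem \ref{cor:MAIN} to the generic $\varepsilon$-Poincar\'e inequality stated as Proposition \ref{propo:GG2} and reduces matters to checking that the constants there depend on $f$ only through $\|f\|_{L^{1}_{2}}$ and, in the critical case, through $H(f)$.

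Next, I would control $G_{t}$ via the splitting $f(t)=f(t)\wedge M+(f(t)-M)_{+}$. The bounded piece $f(t)\wedge M\in L^{1}\cap L^{\infty}$ interpolates to any $L^{p}$, and its convolution with $|v|^{\gamma}$ is bounded by HLS (or Young in Lorentz scales), producing a uniformly bounded contribution $G_{t}^{\flat}$. For the large piece, the elementary bound $\int_{\{f>M\}} f\,\d v\leq (\log M)^{-1}\bigl(H(f_{\rm in})+C\|f_{\rm in}\|_{L^{1}_{2}}\bigr)$ makes its $L^{1}$-mass arbitrarily small as $M\to\infty$, so the resulting contribution $G_{t}^{\sharp}$ is small in $L^{d/2}$ after HLS (applied in Lorentz scales to skirt the $L^{1}$-$L^{p}$ endpoint). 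A suitable choice $M=M(\varepsilon)$ then produces the decomposition and completes the proof.

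The main obstacle is the critical case $\gamma=-2$, where $|v|^{\gamma}$ sits at the Hardy/Sobolev endpoint: convolution of an $L^{1}$-function with $|v|^{-2}$ lands in weak $L^{d/2,\infty}$ and just fails to lie in $L^{d/2}$. This is precisely where the Lorentz interpolation alluded to in the introduction is indispensable, and it is the one place where the entropy bound cannot be bypassed; consequently the final constant in the critical case depends on both $\|f_{\rm in}\|_{L^{1}_{2}}$ and $H(f_{\rm in})$. For $\gamma\in(-2,0)$, HLS is strictly subcritical, entropy plays no role, and tracking how $\|G_{t}^{\sharp}\|_{L^{d/2}}$ and $\|G_{t}^{\flat}\|_{L^{\infty}}$ scale with $M$ reproduces the announced quantitative form $C_{0}\bigl(1+\varepsilon^{\gamma/(2+\gamma)}\bigr)$ with $C_{0}$ depending only on $\|f_{\rm in}\|_{L^{1}_{2}}$.
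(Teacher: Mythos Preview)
Your reduction to a Schr\"odinger-type inequality $\int G_t\psi^2\le\varepsilon\|\nabla\psi\|_{L^2}^2+C\|\psi\|_{L^2}^2$ with $\psi=\langle v\rangle^{\gamma/2}\phi$ is indeed the right starting point, but the decomposition you propose does not deliver the claimed bounds. The obstruction is the growing weight $\langle v\rangle^{-\gamma}=\langle v\rangle^{|\gamma|}$ that sits \emph{outside} the convolution in your $G_t$. With the split $f=(f\wedge M)+(f-M)_+$, the piece $G_t^\flat=\langle v\rangle^{|\gamma|}\bigl(|\cdot|^\gamma\ast(f\wedge M)\bigr)$ is in general \emph{not} in $L^\infty$: for $d=3$, $\gamma=-2$, take $f=M\sum_n\ind_{B(v_n,r_n)}$ with $|v_n|=2^n$, $r_n=2^{-n}$; this $f$ lies in $L^1_2\cap L^\infty\cap L\log L$, one has $f\wedge M=f$, and yet $G_t^\flat(v_n)\ge|v_n|^2\,M\int_{|z|\le r_n}|z|^{-2}\,\d z\simeq 4\pi M\,2^n\to\infty$. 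The same outer weight prevents any clean $L^{d/2}$ (or $L^{d/2,\infty}$) control of $G_t^\sharp$. The paper's key device is the pointwise inequality
\[
|v-\vet|^\gamma\le 2^{-\gamma}\langle v\rangle^{\gamma}\Bigl(\ind_{|v-\vet|\ge\langle v\rangle/2}+\langle \vet\rangle^{-\gamma}|v-\vet|^{\gamma}\ind_{|v-\vet|<\langle v\rangle/2}\Bigr),
\]
which transfers the weight to the $\vet$-variable \emph{before} convolving, replacing $f$ by $F=\langle\cdot\rangle^{-\gamma}f\in L^1$. One then estimates $\int\psi^2\,\mathcal{I}_{d+\gamma}[F]$ via the Lorentz--H\"older inequality and the weak-type mapping $\mathcal{I}_{d+\gamma}:L^1\to L^{d/|\gamma|,\infty}$. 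For $-2<\gamma<0$ no splitting of $f$ is performed at all: the $\varepsilon$ arises from Lorentz interpolation of $\|\psi\|_{L^{2p,2}}$ (with $p=\frac{d}{d+\gamma}$) between $\|\psi\|_{L^2}$ and $\|\psi\|_{L^{2d/(d-2),2}}\le C_d\|\nabla\psi\|_{L^2}$, which directly produces the constant $C_0(1+\varepsilon^{\gamma/(2+\gamma)})$ depending only on $\|f_{\rm in}\|_{L^1_2}$.

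For the critical case $\gamma=-2$ there is a second gap: entropy alone is not enough. After the weight has been moved inside, one must make $\|F\ind_{\{F>R\}}\|_{L^1}=\int_{\{\langle v\rangle^2 f>R\}}\langle v\rangle^2 f\,\d v$ small, and doing so \emph{uniformly in} $t\in[0,T]$ requires uniform integrability of $\{\langle\cdot\rangle^2 f(t)\}_{t}$, not merely a bound on $H(f(t))$. The paper secures this through de la Vall\'ee Poussin: one extracts a convex superlinear $\bm\Psi$ with $\int f_{\rm in}\bm\Psi(|v|^2)\,\d v<\infty$ and proves that this generalized moment propagates (with at most linear growth in $t$) along weak solutions. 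This is why the constant $C$ in the statement for $\gamma=-2$ depends on $f_{\rm in}$ in a non-quantitative way and on $T$, rather than only on $\|f_{\rm in}\|_{L^1_2}$ and $H(f_{\rm in})$ as your sketch suggests.
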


 Let us note that, in the above theorem, the initial datum is only assumed to satisfy natural (physical) bounds. {In the case $\gamma=-2$, the de la Vall\'ee Poussin Theorem \cite{Lau} is used in order to avoid extra assumptions on moments of the initial datum. The $\varepsilon$-Poincar\'e inequality result is stated on a finite time interval. For $-2<\g<0$, it actually holds on $[0,\infty)$. For $\g=-2$, it can be extended to $[0,\infty)$ by assuming some additional finite moments for the initial condition. Indeed, combining the linear growth of moments and the large time behavior enables then to control additional moments uniformly in time. This is what we need in order to extend this result to  $[0,\infty)$.}
 \medskip

 {We subsequently use the above $\e$-Poincar\'e inequality in combination with a De Giorgi argument to show the derivation of pointwise bounds for solutions to \eqref{LFD}.  Such bound was the scope of the work \cite{GG} for the case $-2< \g <0$ providing an estimate of the form
$$
f(v,t) \leq C_{0}\left(1+\frac{1}{t}\right)^{\frac{d}{2}}\,\left(1+|v|\right)^{\frac{d}{2}|\g|}\,.
$$
The estimates presented here includes $\gamma=-2$ and will not deteriorate as the velocity increases. This allows us to recover results obtained in \cite{silvestre} which, by methods borrowed from the nonlinear parabolic equations theory, L. Silvestre derived for $-2 \leq \gamma < 0$ the appearance of pointwise bounds for strong solutions to \eqref{LFD}. We provide here a different approach which seems more direct and does not require any knowledge about parabolic equations.. We mention that for the homogeneous Landau-Fermi-Dirac equation, a quantum correction of the Landau equation, pointwise estimates have been obtained in \cite{ABDL1} using similar techniques (for $\gamma>-2$). Typically our main pointwise estimates can be stated as follows;
\begin{theo}\label{theo:Pointwise} Let $-2\leq \g<0$ and $T>0$. Let a nonnegative initial datum $f_{\mathrm{in}} \in L^1_2(\R^d)\cap L\log L(\R^d)$ satisfying \eqref{eq:Mass} be given. Let  $f(t,\cdot)$ be a weak-solution to \eqref{LFD}. Let us assume that $f_{\rm in}\in L^1_{s}(\R^d)$ for some $s >\frac{d}{2}|\g|$. Then, for any $T > t_{*} >0$, there exists $C_{T} >0$ depending on $T >0$ and $f_{\rm in}$ such that
\begin{equation}\label{eq:Linf}
\sup_{t \in [t_{*},T)}\left\|f(t)\right\|_{L^{\infty}} \leq C_{T}\max\left(1,t_{*}^{-\beta}\right),
\end{equation}
for some explicit $\beta>0$ depending on $s,d,\g$.\end{theo}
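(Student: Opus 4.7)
The plan is to combine the $\e$-Poincar\'e inequality from Theorem \ref{cor:MAIN} with a De Giorgi type iteration on truncations $(f-K)_{+}$, after first producing $L^{p}$-integrability for every finite $p$ from the physical bounds only.

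\textbf{Step 1 (Generation of $L^{p}$-bounds).} Testing \eqref{LFD} against $pf^{p-1}$ and integrating by parts gives
\[
\frac{d}{dt}\|f(t)\|_{L^{p}}^{p}+\tfrac{4(p-1)}{p}\int_{\R^{d}}\mathcal{A}[f]\,\grad(f^{p/2})\cdot\grad(f^{p/2})\,\d v=(p-1)\int_{\R^{d}}\bm{c}_{\g}[f]\,f^{p}\,\d v.
\]
Applying Theorem \ref{cor:MAIN} with $\phi=f^{p/2}$ bounds the right-hand side by $\e\int|\grad(\langle v\rangle^{\g/2}f^{p/2})|^{2}\,\d v+C\int f^{p}\langle v\rangle^{\g}\,\d v$; combining with the classical coercivity of $\mathcal{A}[f]$ (weighted by $\langle v\rangle^{\g}$ on a suitable cone) and choosing $\e$ small enough to absorb the gradient term yields a differential inequality which, via a Nash/Moser interpolation with the conserved mass, produces the generation estimate
\[
\|f(t)\|_{L^{p}}\leq C_{p}\,\max\bigl(1,t^{-\alpha_{p}}\bigr),\qquad t\in(0,T],
\]
for every $p\in(1,\infty)$, with explicit $\alpha_{p}>0$.

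\textbf{Step 2 (De Giorgi iteration).} Fix a level $K_{0}>0$ (to be determined) and set, for $n\in\N$,
\[
K_{n}=K_{0}\bigl(2-2^{-n}\bigr),\qquad t_{n}=t_{\ast}\bigl(1-2^{-n}\bigr),\qquad f_{n}=(f-K_{n})_{+}.
\]
Multiplying \eqref{LFD} by $f_{n}$ and integrating on $[\sigma,T]\times\R^{d}$ for $\sigma\in[t_{n-1},t_{n}]$,
\[
\tfrac{1}{2}\|f_{n}(T)\|_{L^{2}}^{2}+\int_{\sigma}^{T}\!\!\int_{\R^{d}}\mathcal{A}[f]\,\grad f_{n}\cdot\grad f_{n}\,\d v\,\d\tau\leq\tfrac{1}{2}\|f_{n}(\sigma)\|_{L^{2}}^{2}+\tfrac{1}{2}\int_{\sigma}^{T}\!\!\int_{\R^{d}}\bm{c}_{\g}[f]\,\bigl(f_{n}^{2}+2K_{n}f_{n}\bigr)\,\d v\,\d\tau.
\]
Using Theorem \ref{cor:MAIN} again (with $\phi=f_{n}$) to absorb the singular term into the diffusion, then averaging over $\sigma\in[t_{n-1},t_{n}]$ and applying a weighted Sobolev embedding to $\langle v\rangle^{\g/2}f_{n}$, one arrives at a nonlinear recurrence
\[
U_{n}\leq C\,M^{n}\,U_{n-1}^{1+\mu},\qquad U_{n}:=\sup_{t\in[t_{n},T]}\|f_{n}(t)\|_{L^{2}}^{2}+\int_{t_{n}}^{T}\!\!\int_{\R^{d}}\bigl|\grad(\langle v\rangle^{\g/2}f_{n})\bigr|^{2}\,\d v\,\d\tau,
\]
for some $\mu>0$ and $M>1$ depending only on $d$ and $\g$. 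The classical De Giorgi lemma then forces $U_{n}\to 0$ as soon as $U_{0}$ is small; by Step 1 and Chebyshev, this smallness is guaranteed by choosing $K_{0}\sim C_{T}\max(1,t_{\ast}^{-\beta})$ for a suitable $\beta$.

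\textbf{Step 3 (Role of the moment assumption and conclusion).} Closing the recurrence requires controlling $\int f_{n}^{2}\langle v\rangle^{\g}\,\d v$ by $\|f_{n}\|_{L^{2}}^{2}$ up to remainders that decay along the iteration. This is where the hypothesis $f_{\mathrm{in}}\in L^{1}_{s}$ with $s>\tfrac{d}{2}|\g|$ enters: propagation of moments (see Appendix \ref{sec:known}) yields a uniform $L^{1}_{s}$ bound on $f(t)$, and $L^{p}$--$L^{1}_{s}$ interpolation then absorbs the weight $\langle v\rangle^{\g}$ with constants independent of $n$. Letting $n\to\infty$ gives $f\leq K_{\infty}=2K_{0}$ on $[t_{\ast},T]\times\R^{d}$, which is precisely \eqref{eq:Linf}, with $\beta$ explicit in terms of $\alpha_{p_{0}}$, $d$, $\g$ and $s$.

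The hard part is Step 2: the degeneracy of the diffusive weight $\langle v\rangle^{\g}$ becomes critical at $\g=-2$, and one must carefully track the interplay between the Sobolev exponent, the power $\g$ and the moment $s$ in order to keep a strictly positive gain $\mu$ independent of $n$. Theorem \ref{cor:MAIN}, and in particular the HLS/Lorentz route behind it, is precisely what allows the critical case $\g=-2$ to be handled within the same scheme, with uniform-in-$v$ bounds rather than the polynomially growing estimate $(1+|v|)^{d|\g|/2}$ of \cite{GG}.
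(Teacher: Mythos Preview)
Your overall architecture matches the paper's: generate $L^{p}$ bounds via the $\e$-Poincar\'e inequality (this is Theorem \ref{theo:boundedL2}), then run a De Giorgi iteration on level sets (Propositions \ref{main-energy-functional}, \ref{main-energy-functional-2} and Theorems \ref{Linfinito*}, \ref{Linfinito*-2}). However, two points in Steps 2--3 are genuine gaps rather than harmless abbreviations.

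First, the term linear in the level, $K_{n}\int(-\bm{c}_{\gamma}[f])f_{n}\,\d v$, cannot be absorbed by Theorem \ref{cor:MAIN}: that inequality controls $\int\phi^{2}(-\bm{c}_{\gamma}[f])\,\d v$, whereas here the integrand is $f_{n}$, not $f_{n}^{2}$, and setting $\phi=\sqrt{f_{n}}$ produces $|\nabla f_{n}|^{2}/f_{n}$ on the right, which the diffusion does not control. The paper treats this term by separate tools: for $-2<\gamma<0$ it invokes \cite[Proposition 2.4]{ABDL1} to bound it by weighted $L^{1}$ and $L^{p_{\gamma}}$ norms of $f_{\ell}^{+}$ (see \eqref{ef-f1}), and for $\gamma=-2$ it uses HLS combined with the interpolation \eqref{eq:flLd/(d-1)} in Proposition \ref{main-energy-functional-2}. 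These additional contributions are why the actual recurrences \eqref{DeG-ineq} and \eqref{DeG-ineq-2} contain three terms with distinct powers of $K$ and $E_{n}$, not the clean $U_{n}\leq CM^{n}U_{n-1}^{1+\mu}$ you wrote; the choice of $K$ must make all three small simultaneously.

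Second, the role of the moment hypothesis $s>\tfrac{d}{2}|\gamma|$ is misplaced. The bound $\int f_{n}^{2}\langle v\rangle^{\gamma}\,\d v\leq\|f_{n}\|_{L^{2}}^{2}$ is free since $\gamma<0$; the actual difficulty is the opposite. The energy functional $\mathscr{E}_{\ell}$ carries the \emph{unweighted} $\|f_{\ell}^{+}\|_{L^{2}}^{2}$ (from $\partial_{t}$), yet the diffusion only controls the \emph{weighted} gradient $\|\nabla(\langle\cdot\rangle^{\gamma/2}f_{\ell}^{+})\|_{L^{2}}$, and the time-averaged term $\frac{1}{T_{2}-T_{1}}\int\|f_{\ell}^{+}\|_{L^{2}}^{2}\d\tau$ must be re-expressed through the latter. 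This bridge is the interpolation \eqref{eq:flLq}, which introduces the moment $\|\langle\cdot\rangle^{s}f_{k}^{+}\|_{L^{1}}$ with $s=-\frac{\gamma d}{2d+4-qd}>\tfrac{d}{2}|\gamma|$; that is where the threshold on $s$ actually originates, not from any weight removal on the $\kappa_{0}$ term.
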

A precise statement is given in Theorem \ref{Linfinito*} for $-2 < \g <0$ and Theorem \ref{Linfinito*-2} for $\g=-2$ where the precise expression of the constant $C_{0}$ and the parameter $\beta >0$ can be found.}  {As said, the method of proof is based upon the implementation of a De Giorgi type argument \cite{degiorgi}. The method relies into two main steps:
\begin{itemize}
\item First, we use the $\e$-Poincar\'e inequality to show the appearance of suitable $L^{p}$-bound, $p < \infty$ (see Theorem \ref{theo:boundedL2} for a precise statement). 
\item Second, a modification of the original De Giorgi argument proposed in \cite{ricardo} for the study of the Boltzmann equation without cut-off and implemented in \cite{ABDL1} for the Landau-Fermi-Dirac equation, allows to show that the $L^{p}$-bounds derived in the previous steps can be upgraded into $L^{\infty}$-bounds. Such a typical $L^{p}-L^{\infty}$ argument in the spirit of \cite{degiorgi}  has already proved to be useful, even in the context of spatially inhomogeneous kinetic equations \cite{guo,AMSY}. 
\end{itemize} }
\subsection{Related literature}  The study of the spatially homogeneous Landau equation has a long history and the case of hard potentials, corresponding to $\g \geq 0$ has been thoroughly  investigated in \cite{DeVi1} showing existence, uniqueness, appearance of smoothness and of moments. The convergence towards equilibrium is also known to hold at some exponential rate and similar results hold true for the case of Maxwell molecules corresponding to $\g=0.$ 

Regarding the case of moderately soft potentials $\g \in [-2,0)$, the situation is not too different as far as the Cauchy theory is concerned and the main differences are related to:
\begin{enumerate}[(i)]
\item moments which are no longer created but are only propagated with a growth which is at most linear in time.
\item Convergence to equilibrium which is not exponential anymore but only algebraic. Notice that  "stretched exponential" convergence rate can be proven under some restrictive assumptions on the initial datum \cite{kleber,ABDL1}.
\end{enumerate} 
We refer to \cite{Wu, DesvJFA, DeVi2} for a precise description of the existing results as well as the work \cite{ALL} for a complete review of the Landau equation with moderately soft potentials which are the main object of the present paper. 

The situation changes drastically in the case of very soft potentials for which $-d \leq \g <-2$ (including the Coulomb case $\g=-d$) for which the Cauchy theory is much more involved.  Indeed, only weak solutions (including H-solutions) are known to exist whereas uniqueness is an open problem \cite{DesvJFA}. Notice that stretched exponential convergence to equilibrium still holds \cite{CDH}. As far as regularity of solutions is concerned, it is possible to get a bound on the Hausdorff dimension of the times in which the solution might be singular \cite{GGIV,GIV}. Various perturbative results are also available in the literature and in particular local-in-time solutions for large data and global-in-time solutions for data close to equilibrium have been recently obtained in \cite{DesvHe}.  In the case $-d \leq \g < -2$, the global well-posedness of \eqref{LFD} for general initial data  is still an open question and, in the most recent years, several important \emph{conditional results} have been derived. We refer the reader to \cite{ABDL2} for a more complete description of the literature in this case as well as with new conditional results under some Prodi-Serrin like criterion.

\subsection{Organization of the paper} The proof of Theorem  \ref{cor:MAIN}  is given in Section \ref{sec:Poin} where the two different cases $-2 < \g <0$ and $\g=-2$ are treated separately. The consequences of the above $\e$-Poincar\'e inequality are then given in Section \ref{sec:DeG}. The appearance of $L^{p}$-norm for $1 < p < \infty$ are given in Section \ref{sec:Lp} resulting in Theorem \ref{theo:boundedL2}. The $L^{p}-L^{\infty}$ argument of De Giorgi is then given in Section \ref{sec:level} with a full proof of Theorem \ref{theo:Pointwise} where again we distinguish the two cases $-2 < \g <0$ and $\g=-2$. In Appendix \ref{sec:known} we recall some known results about weak solutions to the Landau equation whereas Appendix \ref{sec:Lorentz} recalls some results about Lorentz spaces used in the core of the text.

\subsection*{Acknowledgments} B.L. gratefully acknowledges the financial support from the
Italian Ministry of Education, University and Research (MIUR), Dipartimenti di Eccellenza
grant 2022-2027 as well as the support from the de Castro Statistics Initiative, Collegio Carlo Alberto (Torino). The authors are grateful to L. Desvillettes for  insightful discussions about the Landau equation.

\section{The $\e$-Poincar\'e inequality extended}\label{sec:Poin}
We recall here inequality \eqref{eq:epoincare} in the known case $-2 < \g <0$ and provide the extended proof valid for the critical case $\g=-2$. The proof uses elementary knowledge of Lorentz spaces whose main properties are given in Appendix \ref{sec:Lorentz}.
\subsection{Case $-2 < \g <0$} We first show the $\e$-Poincar\'e inequality in the case $-2 < \g <0.$
\begin{prop}\label{prop:GG} Assume that $-2 < \g < 0$ and let a nonnegative mapping $f_{\mathrm{in}} \in L^1_2(\R^d)\cap L\log L(\R^d)$ satisfying \eqref{eq:Mass} be given. {There exists $C_{0} >0$ depending only on $\|f_{\rm in}\|_{L^{1}_{2}}$ such that  for any $\e>0$, any $f \in \mathcal{Y}_{0}(f_{\mathrm{in}})$ and any smooth function $\phi$, }
\begin{equation}\label{eq:estimat}
-\int_{\R^{d}}\phi^{2}\bm{c}_{\g}[f]\d v \leq \e\,\int_{\R^{d}}\left|\nabla \left(\langle v\rangle^{\frac{\g}{2}}\phi(v)\right)\right|^{2}\d v 
+C_{0}(1+\e^{\frac{\g}{2+\g}})\int_{\R^{d}}\phi^{2}\langle v\rangle^{\g}\d v.
\end{equation}
\end{prop}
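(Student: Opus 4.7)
The plan is to reduce \eqref{eq:estimat} to a classical Hardy inequality after a weight-absorbing substitution. From the definition of $c(z)$ one has $\bm{c}_{\gamma}[f](v)=-(d-1)(d+\gamma)\,(|\cdot|^{\gamma}\ast f)(v)$. Setting $\psi:=\langle v\rangle^{\gamma/2}\phi$, so that $\phi^{2}=\langle v\rangle^{|\gamma|}\psi^{2}$ and the right-hand side of \eqref{eq:estimat} is expressed directly in terms of $\psi$, the quantity to control becomes
\[
-\int_{\R^{d}}\phi^{2}\,\bm{c}_{\gamma}[f]\,\d v\;=\;(d-1)(d+\gamma)\int_{\R^{d}}\psi^{2}(v)\,\langle v\rangle^{|\gamma|}\,(|\cdot|^{\gamma}\ast f)(v)\,\d v.
\]
The strategy is to bound the weighted Riesz potential $\langle v\rangle^{|\gamma|}(|\cdot|^{\gamma}\ast f)$ by an unweighted Riesz potential of an $L^{1}$ function, then feed that into Hardy's inequality via a parameter-dependent split of $|v-v_{*}|^{\gamma}$.

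The first step is to absorb the weight $\langle v\rangle^{|\gamma|}$ by a near-far splitting in $v_{*}$. On the region $\{|v-v_{*}|\leq\langle v\rangle/2\}$ one has $\langle v\rangle\sim\langle v_{*}\rangle$, so $\langle v\rangle^{|\gamma|}\leq 2^{|\gamma|}\langle v_{*}\rangle^{|\gamma|}$; on the complementary region the bound $|v-v_{*}|^{\gamma}\leq 2^{|\gamma|}\langle v\rangle^{\gamma}$ cancels the weight outright. Together these yield the pointwise inequality
\[
\langle v\rangle^{|\gamma|}\,(|\cdot|^{\gamma}\ast f)(v)\;\leq\;C\bigl((|\cdot|^{\gamma}\ast F)(v)+\|f\|_{L^{1}}\bigr),\qquad F:=\langle\cdot\rangle^{|\gamma|}f,
\]
with $\|F\|_{L^{1}}\leq\|f\|_{L^{1}_{2}}$ since $|\gamma|\leq 2$. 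Conservation of mass and kinetic energy along $\mathcal{Y}_{0}(f_{\mathrm{in}})$ then bounds both norms purely in terms of $\|f_{\mathrm{in}}\|_{L^{1}_{2}}$, which is exactly what we need for the prescribed dependence of $C_{0}$.

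The second step exploits the one-parameter interpolation
\[
|v-v_{*}|^{\gamma}\;\leq\;R^{\gamma}+R^{\gamma+2}\,|v-v_{*}|^{-2}\qquad(R>0),
\]
valid precisely because $\gamma\in(-2,0)$ (the first term handles $|v-v_{*}|\geq R$; for $|v-v_{*}|<R$ one writes $|v-v_{*}|^{\gamma}=|v-v_{*}|^{\gamma+2}|v-v_{*}|^{-2}\leq R^{\gamma+2}|v-v_{*}|^{-2}$). Inserting this into $\int\psi^{2}(|\cdot|^{\gamma}\ast F)\,\d v$ and using Fubini, one piece is controlled trivially by $R^{\gamma}\|F\|_{L^{1}}\|\psi\|_{L^{2}}^{2}$, and the singular piece reduces to
\[
R^{\gamma+2}\int_{\R^{d}}F(v_{*})\int_{\R^{d}}\frac{\psi^{2}(v)}{|v-v_{*}|^{2}}\,\d v\,\d v_{*}.
\]
The classical Hardy inequality centered at $v_{*}$ gives $\int\psi^{2}(v)|v-v_{*}|^{-2}\,\d v\leq C_{H}\|\nabla\psi\|_{L^{2}}^{2}$ uniformly in $v_{*}$ (for $d\geq 3$), so this singular piece is bounded by $C_{H}R^{\gamma+2}\|F\|_{L^{1}}\|\nabla\psi\|_{L^{2}}^{2}$.

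Finally, choosing $R$ so that the total prefactor of $\|\nabla\psi\|_{L^{2}}^{2}$ equals exactly $\varepsilon$ forces $R^{\gamma+2}\sim\varepsilon$ and hence $R^{\gamma}\sim\varepsilon^{\gamma/(2+\gamma)}$; gathering all pieces yields \eqref{eq:estimat} with $C_{0}$ depending only on $d$, $\gamma$, the Hardy constant, and $\|f_{\mathrm{in}}\|_{L^{1}_{2}}$. I expect the main obstacle to be the weight-absorption of the first step: without the near-far splitting that replaces $\langle v\rangle^{|\gamma|}(|\cdot|^{\gamma}\ast f)$ by a bona fide Riesz convolution of an $L^{1}$ function $F$, one cannot subsequently reduce to $(|\cdot|^{-2}\ast F)$ where Hardy applies, and the correct $\varepsilon^{\gamma/(2+\gamma)}$ scaling on the low-order term would fail to emerge. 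Everything else is explicit balancing, carried out in the physical dimensions $d\geq 3$ where Hardy's inequality is available.
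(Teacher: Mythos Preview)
Your proof is correct and proceeds along a genuinely different, more elementary route than the paper's. Both arguments share the first step exactly: the near--far split in $v_{*}$ that trades the weight $\langle v\rangle^{|\gamma|}$ against the moment $\langle v_{*}\rangle^{|\gamma|}$, reducing matters to bounding $\int\psi^{2}\,(|\cdot|^{\gamma}\ast F)$ with $F=\langle\cdot\rangle^{|\gamma|}f\in L^{1}$. From there the paper estimates the Riesz potential $\mathcal{I}_{d+\gamma}[F]$ in the weak space $L^{d/|\gamma|,\infty}$, pairs it with $\psi^{2}$ via H\"older in Lorentz spaces, interpolates $\|\psi\|_{2p,2}$ between $\|\psi\|_{2}$ and $\|\psi\|_{2d/(d-2),2}$, and closes with the Lorentz--Sobolev inequality. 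You instead use the pointwise split $|v-v_{*}|^{\gamma}\leq R^{\gamma}+R^{\gamma+2}|v-v_{*}|^{-2}$ and the classical Hardy inequality, then optimise in $R$. Your argument is shorter and avoids the Lorentz machinery entirely; the paper's approach, while heavier, is designed so that the same framework survives at the endpoint $\gamma=-2$, where your $R$-split degenerates ($R^{\gamma+2}\equiv 1$ leaves no small parameter) and the extra Lorentz refinement together with the entropy bound becomes essential.
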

\begin{proof} Let $f \in \mathcal{Y}_{0}(f_{\mathrm{in}})$ be fixed. For a given nonnegative $\phi$ set
$$
I[\phi] :=-\int_{\R^{d}}\phi^{2}\bm{c}_{\g}[f]\d v=(d-1)\,(\g+d)\int_{\R^{d}\times \R^{d}}|v-\vet|^{\g}\phi^{2}(v)f(\vet)\d v\d\vet\,.
$$
For any $v,\vet \in \R^{d}$, if $|v-\vet| < \frac{1}{2}\langle v\rangle$, then $\langle v\rangle\leq 2\langle \vet\rangle$, and
we deduce from this, see \cite[Eq. (2.5)]{amuxy}, that
$$|v-\vet|^{\g}\leq 2^{-\g}\langle v\rangle^{\g}\left(\ind_{\left\{|v-\vet| \geq\frac{\langle v\rangle}{2}\right\}}+
\langle \vet\rangle^{-\g}|v-\vet|^{\g}\ind_{\left\{|v-\vet|< \frac{\langle v\rangle}{2}\right\}}\right).$$
Therefore,
\begin{equation}\label{eq:ineq}
I[\phi] \leq  2^{-\g}(d-1)\,(\g+d)\left(I_{1}+I_{2}\right)
\end{equation}
with
\begin{multline*}
I_{1}:=\int_{\R^{d}}\langle v\rangle^{\g}\phi^{2}(v)\d v \int_{|v-\vet| \geq \frac{\langle v\rangle}{2}} f(\vet)\d\vet\,,\\
\text{and}\qquad I_{2}:=\int_{\R^{d}}\langle \vet\rangle^{-\g}f(\vet)\d \vet\int_{|v-\vet| < \frac{1}{2}\langle v\rangle }|v-\vet|^{\g}\langle v\rangle^{\g}\phi^{2}(v)\d v.\end{multline*}
Setting $\psi(v)=\langle v\rangle^{\frac{\g}{2}}\phi(v)$ and $F(v)=\langle v\rangle^{-\g}f(v)$ one has that
\begin{equation}\label{eq:ineqI1}
I_{1} \leq \|f_{\rm in}\|_{L^{1}}\|\psi\|_{L^{2}}^{2}\,,
\end{equation}
and
$$
I_{2} \leq \int_{\R^{d}\times \R^{d}}|v-\vet|^{\g}F(\vet)\psi^{2}(v)\d v\d\vet=:J\,.
$$ 
We observe that
$$
J=\int_{\R^{d}}\mathcal{I}_{d+\gamma}[F]\,\psi^{2}(v)\d v,
$$
where, for any $\alpha \in (0,d)$, $\mathcal{I}_{\alpha}[g]$ denotes the Riesz operator
$$
\mathcal{I}_{\alpha}[g](v)=\int_{\R^{d}}\frac{g(\vet)}{|v-\vet|^{d-\alpha}}\d \vet, \qquad \alpha \in (0,d)\,.
$$
Then, using the H\"older inequality in Lorentz spaces it holds that
$$
J  \leq \left\|\mathcal{I}_{d+\g}[F]\right\|_{q,\infty}\,\left\|\psi^{2}\right\|_{p,1}, \qquad \frac{1}{q}+\frac{1}{p}=1\,.
$$
Recall that $\mathcal{I}_{\alpha}$ is a bounded operator from $L^{1}$ to $L^{q,\infty}$ if $1=\frac{\alpha}{d}+\frac{1}{q}$ which means that, choosing $q=\frac{d}{|\g|}$, $p=\frac{d}{d+\g},$
there exists $C_1=C_1(d,\g)$ such that
$$
\left\|\mathcal{I}_{d+\g}[F]\right\|_{ \frac{d}{|\g|},\infty} \leq C_1(d,\g)\left\|F\right\|_{L^{1}}
$$
and
$$
J \leq C_1(d,\g)\,\|F\|_{L^{1}}\,\|\psi^{2}\|_{{p,1}}=C_1(d,\g)\,\|F\|_{L^{1}}\,\|\psi\|_{{2p,2}}^{2}, \qquad p=\frac{d}{d+\g}\,.
$$
Since $-2< \g <0$, note that $2 < 2p < \frac{2d}{d-2}$.  Then, using Young's inequality together with \eqref{eq:interpopq} shows that for any $\delta >0$, 
$$
{C_1(d,\g)}\|\psi\|_{2p,2}^{2} \leq  {C_2({d,\g})}\delta^{-\frac{1-\theta}{\theta}}\|\psi\|_{{2,2}}^{2} + \delta\,\|\psi\|_{{\frac{2d}{{d-2}},2}}^{2}
$$
where $\frac{1}{p}=\theta+(1-\theta)\frac{ {d-2}}{d}$, that is $\theta=\frac{2+\g}{2}$. Since $L^{2,2}=L^{2}$, this means that
$$
J \leq C_2(d,\g)\delta^{\frac{\g}{2+\g}}\|F\|_{L^{1}}\|\psi\|_{L^{2}}^{2}\,+\delta\,\|F\|_{L^{1}}\,\|\psi\|_{{\frac{2d}{d-2},2}}^{2}\,.
$$
Furthermore, Sobolev inequality in Lorentz spaces, see Theorem \ref{theo:Sob}, gives that
\begin{equation}\label{eq:precised}
\|\psi\|_{{\frac{2d}{d-2},2}} \leq C_{d}\,\|\nabla \psi\|_{L^{2}}\,.
\end{equation}
Consequently, it follows that 
\begin{equation}\label{eq:JJ}
J  \leq C_2(d,\g)\delta^{\frac{\g}{2+\g}}\|F\|_{L^{1}}\|\psi\|_{L^{2}}^{2}\,+\delta\,C_{d}\,\|F\|_{L^{1}}\,\|\nabla \psi\|^{2}_{L^{2}}, \qquad \forall \delta >0\,.
\end{equation}
Notice that $\|f_{\rm in}\|_{L^{1}} \leq \|F\|_{L^{1}} \leq \|f_{\rm in}\|_{L^{1}_{2}}$ since $-\g < 2$.  Putting together with \eqref{eq:ineq} and \eqref{eq:ineqI1}, after choosing $\e=2^{-\g}(d-1)\,(\g+d)C_{d}\|F\|_{L^{1}}\delta$, there exists $C=C(d,\g, {\|f_{\rm in}\|_{L^{1}_{2}}}) >0$ such that
$$
I[\phi] \leq C\,\e^{\frac{\g}{\g+2}}\,\|\psi\|^{2}_{L^{2}}+\e\,\|\nabla \psi\|_{L^{2}}^{2} +   {2^{-\g}(d-1)\,(\g+d)}\|f_{\rm in}\|_{L^{1}_{2}}\|\psi\|^{2}_{L^{2}}, \qquad \forall \e >0\,,
$$
which is the desired result recalling that $\psi=\langle \cdot\rangle^{\frac{\g}{2}}\phi$.\end{proof}
\subsection{Case $\g=-2$} This is a critical case for which the aforementioned Sobolev inequality does not leave a necessary room to introduce a small parameter $\e$. To overcome this issue, we resort to Lorentz space and fully exploits the fact that entropy is bounded.  %

\begin{prop}\label{propo:GG2} Let $\g=-2$ and let a nonnegative mapping $f_{\mathrm{in}} \in L^1_2(\R^d)\cap L\log L(\R^d)$ satisfying \eqref{eq:Mass} be given.  We assume that there exists $\bm{\Psi}\in\C([0,\infty))$ such that
    \begin{equation}\label{eq:mpsi}
      m_{\bm{\Psi}}(f):=\int_{\R^d}f(v)\,\bm{\Psi}\!\left(| v|^2\right)\d v<+\infty
      \end{equation}
    and
    \begin{equation}\label{eq:limpsi}
      \lim_{r\to\infty}\frac{\bm{\Psi}(r)}{r}=+\infty.
      \end{equation}
  Then, for any $\e>0$, there exists $C >0$ depending only on $\e$, $\|f\|_{L^{1}_{2}}$, an upper bound of $m_{\bm{\Psi}}(f)$ and an upper bound of $H(f)$ such that
for any  smooth function $\phi$,
  \begin{equation}\label{eq:estimatc}
-\int_{\R^{d}}\phi^{2}\bm{c}_{\g}[f]\d v \leq \e\int_{\R^{d}}\left|\nabla \left(\langle v\rangle^{\frac{\g}{2}}\phi(v)\right)\right|^{2}\d v 
+C \int_{\R^{d}}\phi^{2}\langle v\rangle^{\g}\d v\,.
\end{equation}
\end{prop}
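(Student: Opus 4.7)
The plan is to follow the proof of Proposition~\ref{prop:GG} as far as possible, and then repair the critical step $\g=-2$ by a truncation of $F:=\langle v\rangle^{2}f$ that exploits \emph{both} the weighted moment $m_{\bm{\Psi}}(f)$ and the entropy $H(f)$. Running the same splitting $|v-\vet|\lessgtr\tfrac12\langle v\rangle$ and setting $\psi:=\langle v\rangle^{-1}\phi$, one arrives at $I[\phi]\le 2^{-\g}(d-1)(d-2)(I_{1}+I_{2})$ with $I_{1}\le\|f\|_{L^{1}}\|\psi\|_{L^{2}}^{2}$ and $I_{2}\le J:=\int_{\R^{d}}\mathcal{I}_{d-2}[F]\,\psi^{2}\,\d v$. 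At $\g=-2$ the interpolation exponent $\theta=(2+\g)/2$ collapses to $0$, so the mere bound $F\in L^{1}$ together with $\mathcal{I}_{d-2}\colon L^{1}\to L^{d/2,\infty}$ and the critical Sobolev embedding $L^{2,2}\hookrightarrow L^{2d/(d-2),2}$ produces only an inequality of the form $J\le C\|F\|_{L^{1}}\|\nabla\psi\|_{L^{2}}^{2}$, with no small parameter available to absorb the gradient.

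To recover smallness, I introduce parameters $R,M>1$ and decompose $F=F_{R,M}+\widetilde F_{R,M}$ with $F_{R,M}:=F\,\ind_{\{|v|\le R\}\cap\{f\le M\}}$, so that $F_{R,M}$ is bounded by $\langle R\rangle^{2}M$ with compact support, while the remainder is \emph{quantitatively} small in $L^{1}$:
\[
\|\widetilde F_{R,M}\|_{L^{1}}\le m_{\bm{\Psi}}(f)\,\sup_{r>R^{2}}\frac{1+r}{\bm{\Psi}(r)}+\langle R\rangle^{2}\int_{\{f>M\}}f\,\d v.
\]
The first term tends to $0$ as $R\to\infty$ by \eqref{eq:limpsi}; the second, after $R$ is fixed, tends to $0$ as $M\to\infty$ by a standard entropy-based estimate using $H(f)\le H_{0}$ and $\|f\|_{L^{1}_{2}}$. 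Consequently, for every $\eta>0$ one may choose $R=R(\eta),M=M(\eta)$ so that $\|\widetilde F_{R,M}\|_{L^{1}}\le \eta$.

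Splitting $J=J^{(1)}+J^{(2)}$ accordingly, the piece $J^{(2)}$ built on $\widetilde F_{R,M}$ is treated exactly as at the critical exponent in Proposition~\ref{prop:GG} and yields $J^{(2)}\le C_{d}\|\widetilde F_{R,M}\|_{L^{1}}\|\nabla\psi\|_{L^{2}}^{2}\le C_{d}\eta\,\|\nabla\psi\|_{L^{2}}^{2}$. For $J^{(1)}$ I fix any $p\in(1,d/(d-2))$; the boundedness of $\mathcal{I}_{d-2}\colon L^{p}\to L^{r}$ with $1/r=1/p-(d-2)/d>0$, H\"older's inequality, and the \emph{strictly subcritical} Gagliardo--Nirenberg inequality at the exponent $2r'<2d/(d-2)$ give, for every $\delta>0$,
\[
J^{(1)}\le C_{p,d}\,\|F_{R,M}\|_{L^{p}}\bigl(\delta\,\|\nabla\psi\|_{L^{2}}^{2}+C_{\delta}\,\|\psi\|_{L^{2}}^{2}\bigr).
\]
Given $\e>0$ one first picks $\eta$ (hence $R,M$) so that the $\|\nabla\psi\|_{L^{2}}^{2}$-coefficient coming from $J^{(2)}$ is $\le\e/2$, and then $\delta$ small enough that $C_{p,d}\|F_{R,M}\|_{L^{p}}\delta\le\e/2$; combining with the $I_{1}$ bound and recalling $\psi=\langle v\rangle^{-1}\phi$ yields \eqref{eq:estimatc} with $C$ depending only on $\e$, $\|f\|_{L^{1}_{2}}$, an upper bound of $m_{\bm{\Psi}}(f)$, and an upper bound of $H(f)$.

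The main obstacle is precisely this quantitative decomposition step: producing $L^{1}$-smallness for $\widetilde F_{R,M}$ with constants controlled \emph{only} by the two scalar inputs $m_{\bm{\Psi}}(f)$ and $H(f)$. Once this de la Vall\'ee Poussin-type bound is in place, the critical Sobolev obstruction of Proposition~\ref{prop:GG} is circumvented by applying its argument to the tail $\widetilde F_{R,M}$ (where the resulting coefficient is proportional to $\eta$) and a strictly subcritical Gagliardo--Nirenberg argument to the bulk $F_{R,M}$, where the endpoint Sobolev embedding is never invoked.
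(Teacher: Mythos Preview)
Your argument is correct and uses the same two ingredients as the paper (the weighted moment $m_{\bm\Psi}(f)$ for the velocity tail, the entropy $H(f)$ for the height tail), but the decomposition and the treatment of the ``bulk'' differ. The paper first splits $J$ according to $|v-v_*|\lessgtr 1$, then splits $F$ only by the level set $\{F>R_1\}$; on $\{|v-v_*|\le 1,\,F\le R_1\}$ the kernel $|v-v_*|^{-2}$ is integrated directly over the unit ball, yielding the elementary bound $J_{2,R_1}^{-}\le C_d R_1\|\psi\|_{L^2}^2$ without any HLS or Gagliardo--Nirenberg step. The singular part $\{F>R_1\}$ is then handled by the same critical Lorentz--Sobolev estimate you use for $J^{(2)}$, and its $L^1$-smallness is obtained by a further splitting in $f\gtrless\sqrt{R_1}$ and $|v|^2\lessgtr R_2$.

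Your route is a bit more streamlined: you truncate $F$ simultaneously in velocity and height, send the \emph{whole} tail through the critical Lorentz--Sobolev estimate (with a small $L^1$-coefficient), and on the bulk you exploit that $F_{R,M}\in L^p$ for some $p\in(1,\tfrac{d}{d-2})$ to run HLS at a subcritical exponent, where an honest Gagliardo--Nirenberg interpolation provides the $\delta$-room. This avoids the intermediate $|v-v_*|$ cut-off at the cost of invoking HLS at the $L^p$-level. Both approaches yield a constant $C$ depending (through the choice of $R,M$ or $R_1,R_2$) on $\e$, $\|f\|_{L^1_2}$, the function $\bm\Psi$, an upper bound on $m_{\bm\Psi}(f)$, and an upper bound on $H(f)$, exactly as stated.
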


\begin{proof}
The proof is quite similar to the aforementioned for $\g \in (-2,0)$. In particular, estimates \eqref{eq:ineq} and \eqref{eq:ineqI1} hold.  Moreover, still with the notations
$$
F(v)=\langle v\rangle^{-\g}f(v)=\langle v\rangle^{2}f(v), \qquad \psi(v)=\langle v\rangle^{\frac{\g}{2}}\phi(v)=\langle v\rangle^{-1}\phi(v)\,,
$$
and
$$
J = \int_{\R^{d}\times \R^{d}}|v-\vet|^{-2}\psi(v)^{2}F(\vet)\d \vet\d v\,,
$$
the computations previously performed show that
$$
J \leq C_{d}\|F\|_{L^{1}}\,\|\psi\|_{L^{2p,2}}^{2}, \qquad p=\tfrac{d}{d+\g}=\tfrac{d}{d-2}\,.
$$
Now we are in the situation in which the exponent $2p=\frac{2d}{d-2}$ is precisely the Sobolev exponent that, using again \eqref{eq:precised}
$$
J \leq C_{d}\|F\|_{L^{1}}\,\|\nabla \psi\|_{L^{2}}^{2}\,,
$$
which gives
$$
I_{-2}[\phi]=-\int_{\R^{d}}\phi^{2}\bm{c}_{-2}[f]\d v \leq C_{d}\left(\|F\|_{L^{1}}\,\|\nabla \psi\|_{L^{2}}^{2}+\|f\|_{L^{1}}\|\psi\|^{2}_{L^{2}}\right)\,.
$$
To create room here and make the coefficient in front of $\|\nabla \psi\|_{L^{2}}^{2}$ small we use the entropy.  Indeed, split the integral defining $J$ according to $|v-\vet| >1$ and $|v-\vet| \leq 1$ to get
$J=J_{1}+J_{2}$ with
$$J_{1}:=\int_{|v-\vet|>1}|v-\vet|^{-2}F(\vet)\psi^{2}(v)\d v\d\vet \leq \|F\|_{L^{1}}\|\psi^{2}\|_{L^{1}} \leq \|f\|_{L^{1}_{2}}\|\psi\|_{L^{2}}^{2}\,,
$$
and
$$J_{2}:=\int_{|v-\vet|\leq 1}|v-\vet|^{-2}F(\vet)\psi^{2}(v)\d v\d\vet\,.$$
For any $R_1 >0$, one further splits
$$F=F^{+}_{R_1}+F^{-}_{R_1}, \qquad F^{+}_{R_1}=F\ind_{F >R_1}, \qquad F^{-}_{R_1}=F\ind_{F \leq R_1}\,,$$
therefore,
$$
J_{2} \leq J^{+}_{2,R_1}+J^{-}_{2,R_1}\,,
$$
with 
$$
J_{2,R_1}^{\pm}:=\int_{|v-\vet|\leq1}F^{\pm}_{R_1}(\vet)|v-\vet|^{-2}\psi^{2}(v)\d v\d\vet\,.
$$
Observe that
$$
J_{2,R_1}^{-} \leq R_1\int_{|v-\vet|\leq 1}\psi^{2}(v)|v-\vet|^{-2}\d v\d\vet\leq R_1\|\psi\|^{2}_{L^{2}}\sup_{v}\int_{|v-\vet|\leq 1}|v-\vet|^{-2}\d \vet\,,
$$
where this last integral is independent of $v$ and given by
$$
\int_{|v-\vet|\leq 1}|v-\vet|^{-2}\d \vet=|\S^{d-1}|\int_{0}^{1}r^{d-3}\d r=\frac{|\S^{d-1}|}{d-2}\,.$$
Thus,
$$
J_{2,R_1}^{-}\leq C_{d}\,R_1\,\|\psi\|^{2}_{L^{2}}\,.
$$
We estimate now the most singular term $J_{2,R_1}^{+}$. To do so, we use the estimate in Lorentz spaces as 
$$
J_{2,R_1}^{+} \leq \int_{\R^{d}}\mathcal{I}_{d-2}[F^{+}_{R_1}]\,\psi^{2}\d v \leq C_{d}\|\mathcal{R}_{d-2}[F^{+}_{R_1}]\|_{L^{\frac{d}{2},\infty}}\,\|\psi^{2}\|_{L^{\frac{d}{d-2},1}}\,,
$$
so that,
$$J_{2,R_1}^{+}\leq C_{d}\|F^{+}_{R_1}\|_{L^{1}}\,\|\psi\|_{L^{\frac{2d}{d-2},2}}\leq \tilde{C}_{d}\|F^{+}_{R_1}\|_{L^{1}}\,\|\nabla \psi\|_{L^{2}}^{2}.$$
Combining all this estimates, we end up now with
\begin{equation}\label{eq:I-2}
I_{-2}[\phi] \leq C_d \|f\|_{L^{1}_{2}}\|\psi\|_{L^{2}}^{2}+ C_{d}R_1\|\psi\|_{L^{2}}^{2}+ \tilde{C}_{d}\|F^{+}_{R_1}\|_{L^{1}}\,\|\nabla \psi\|_{L^{2}}^{2}, \qquad \forall R_1 >0\,.
\end{equation}
{Using $m_{\bm{\Psi}}(f)$ and the entropy estimate the term $\tilde{C}_{d}\|F_{R_1}^{+}\|_{L^{1}}$ is small as $R_1$ is sufficiently large.  Indeed, for any $R_2>0$, we have
\begin{equation*}\begin{split}
  \|F^{+}_{R_1}\|_{L^{1}}& = \int_{\{F>R_1, f\ge\sqrt{R_1}\}}f(v)\langle v\rangle^2\d v+\int_{\{F>R_1, f<\sqrt{R_1}\}} f(v)\langle v \rangle^2 \d v\\
  & \le  (1+R_2)\int_{\{F>R_1, f\ge\sqrt{R_1}, | v|^2\le R_2\}}f(v)\d v+\int_{\{|v|^2> R_2\}}f(v)(1+|v|^2)\d v \\
  & \hspace{3cm}+\int_{\{\langle v\rangle^2 > \sqrt{R_1}\}}f(v)\langle v\rangle^2 \d v\,.
\end{split}\end{equation*}
Since $f$ satisfies \eqref{eq:mpsi} with a bounded energy and a bounded entropy, we obtain  
\begin{multline}\label{eq:F+R}
  \|F^{+}_{R_1}\|_{L^{1}} \le 2\;\frac{1+R_2}{\log(R_1)}\,H(f)+ \frac1{R_2} \int_{\R^d} f(v)|v|^2\d v \\
  + m_{\bm{\Psi}}(f)\;\sup_{r\ge R_2} \frac{r}{{\bm{\Psi}}(r)} +m_{\bm{\Psi}}(f)\; \sup_{r\ge \sqrt{R_1}-1} \frac{1+r}{{\bm{\Psi}}(r)} \,.
\end{multline}
We then deduce from \eqref{eq:limpsi} the existence of $R_2>0$ such that
$$ \tilde{C}_{d}\left(\frac1{R_2} \int_{\R^d} f(v)|v|^2\d v 
+ m_{\bm{\Psi}}(f)\;\sup_{r\ge R_2} \frac{r}{{\bm{\Psi}}(r)} \right)\le \frac{\e}{2},$$
and then the existence of $R_1>0$ such that
$$ \tilde{C}_{d}\left( 2\;\frac{1+R_2}{\log(R_1)}\,H(f) + m_{\bm{\Psi}}(f)\; \sup_{r\ge \sqrt{R_1}-1} \frac{1+r}{{\bm{\Psi}}(r)} \right)\le \frac{\e}{2}.$$
It thus follows from \eqref{eq:I-2} and \eqref{eq:F+R} that \eqref{eq:estimatc} holds with $C=C_d\|f\|_{L^1_2}+C_dR_1$. }
\end{proof}
\begin{rmq}
  If $f\in L^1_s(\R^d)$ with $s>2$, on can of course choose ${\bm{\Psi}}(r)=r^{\frac{s}{2}}$ in Proposition \ref{propo:GG2}. In such a case, one checks that
  $$\sup_{r\ge R_2}\frac{r}{{\bm{\Psi}}(r)}=R_2^{\frac{2-s}2} \: \text{ and } \;
 \sup_{r\ge \sqrt{R_1}-1} \frac{1+r}{{\bm{\Psi}}(r)}\le \sup_{r\ge \sqrt{R_1}-1} \frac{2r}{{\bm{\Psi}}(r)}= 2(\sqrt{R_1}-1)^{\frac{2-s}2}$$
for any $R_{1} \ge4.$  Therefore, in the above proof, one can take 
  $$R_2= \max\left(\frac{4 \tilde{C}_{d}}{\e} \int_{\R^d}f(v)|v|^2\d v, \left(\frac{4 \tilde{C}_{d}}{\e}\; m_{\bm{\Psi}}(f)\right)^{\frac{2}{s-2}}\right),$$
  and
  $$ R_1=\max\left(4,\exp\left(\frac{8 \tilde{C}_{d}(1+R_2)}{\e}\,H(f)\right), \left(\left(\frac{8 \tilde{C}_{d}}{\e} \;m_{\bm{\Psi}}(f)\right)^{\frac{2}{s-2}}+1\right)^2\right).$$
  \end{rmq}
One deduce then the following version of the $\e$-Poincar\'e inequality   valid for solutions to the Landau equation \eqref{LFD}.
\begin{cor}\label{cor:epsilon_poincare}
 Let $\g=-2$, $T>0$ and let a nonnegative initial datum $f_{\mathrm{in}} \in L^1_2(\R^d)\cap L\log L(\R^d)$ satisfying \eqref{eq:Mass} be given.  Let $f(t,\cdot)$ be a weak solution to \eqref{LFD}. 
  Then, for any $\e>0$, there exists $C >0$ depending only on $\e$, $f_{\mathrm{in}}$ and $T$ such that, for  any  smooth function $\phi$ and any $t\in [0,T]$ 
    \begin{equation}\label{eq:estimatc_sol}
-\int_{\R^{d}}\phi^{2}(v)\bm{c}_{\g}[f](t,v)\d v \leq \e\int_{\R^{d}}\left|\nabla \left(\langle v\rangle^{\frac{\g}{2}}\phi(v)\right)\right|^{2}\d v 
+C \int_{\R^{d}}\phi^{2}(v)\langle v\rangle^{\g}\d v\,.
\end{equation}
\end{cor}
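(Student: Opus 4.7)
The plan is to verify that every weak solution $f(t,\cdot)$ satisfies, uniformly in $t\in[0,T]$, the three standing hypotheses of Proposition \ref{propo:GG2}: a bound on $\|f(t)\|_{L^1_2}$, a bound on $H(f(t))$, and a bound on $m_{\bm{\Psi}}(f(t))$ for some continuous superlinear function $\bm{\Psi}$. Once these three uniform bounds are in hand, one simply applies Proposition \ref{propo:GG2} at each time $t\in[0,T]$ and collects a constant $C$ depending only on $\varepsilon$, $f_{\rm in}$ and $T$.

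First I would dispatch the two easy ingredients. By the very definition of weak solution, conservation of mass and energy gives $\|f(t)\|_{L^1_2}=\|f_{\rm in}\|_{L^1_2}$, while the entropy inequality yields $H(f(t))\leq H(f_{\rm in})$; both hold for all $t\geq 0$.

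The heart of the matter is the construction and propagation of $\bm{\Psi}$. I would apply the de la Vall\'ee Poussin theorem \cite{Lau} to the nonnegative mapping $v\mapsto (1+|v|^{2})\,f_{\rm in}(v)\in L^{1}(\R^{d})$, producing a convex, nondecreasing function $\bm{\Psi}\in\C([0,\infty))$ with $\bm{\Psi}(0)=0$, $\lim_{r\to\infty}\bm{\Psi}(r)/r=+\infty$, and $m_{\bm{\Psi}}(f_{\rm in})<\infty$. Crucially, the theorem leaves complete freedom to choose $\bm{\Psi}$ with \emph{arbitrarily slow} superlinear growth, and I would exploit this freedom to make $\bm{\Psi}$ compatible with the known moment-propagation results for weak solutions to the Landau equation with moderately soft potentials recalled in Appendix \ref{sec:known}. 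These yield
$$\sup_{t\in[0,T]} m_{\bm{\Psi}}(f(t)) \leq M_T<\infty,$$
with $M_T$ depending on $\bm{\Psi}$, $f_{\rm in}$ and $T$.

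With $\|f(t)\|_{L^1_2}$, $H(f(t))$ and $m_{\bm{\Psi}}(f(t))$ all controlled uniformly on $[0,T]$, Proposition \ref{propo:GG2} is applicable at each fixed $t\in[0,T]$ with a single choice of $\bm{\Psi}$, and produces \eqref{eq:estimatc_sol} with a constant $C>0$ depending on $\varepsilon$ and the three uniform bounds, hence on $\varepsilon$, $f_{\rm in}$, and $T$, but not on $t$. The main obstacle is the second step: exhibiting a \emph{single} superlinear $\bm{\Psi}$ which is both integrable against the initial datum and whose moment stays finite for the weak solution on the whole of $[0,T]$. This is exactly the delicate point justifying the remark in the introduction that the de la Vall\'ee Poussin theorem is used precisely to avoid extra moment assumptions on $f_{\rm in}$; without its flexibility in prescribing the growth of $\bm{\Psi}$ one would be forced to assume some $L^{1}_{s}$ bound on the initial datum with $s>2$.
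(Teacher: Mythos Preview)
Your overall strategy matches the paper's exactly: invoke de la Vall\'ee Poussin to produce a superlinear $\bm{\Psi}$ with $m_{\bm{\Psi}}(f_{\rm in})<\infty$, propagate $m_{\bm{\Psi}}(f(t))$ uniformly on $[0,T]$, then apply Proposition \ref{propo:GG2}. The first and last steps are fine.

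The gap is in the propagation step. You write that you would ``exploit this freedom to make $\bm{\Psi}$ compatible with the known moment-propagation results for weak solutions \ldots\ recalled in Appendix \ref{sec:known}''. But Appendix \ref{sec:known} contains only Theorem \ref{theo:prop_mom}, which propagates \emph{polynomial} moments $\lm_{s}(t)$ for $s>2$ under the hypothesis $f_{\rm in}\in L^{1}_{s}$. That hypothesis is precisely what is \emph{not} assumed here, and is the very thing the de la Vall\'ee Poussin argument is designed to circumvent. There is no result in the appendix (or elsewhere in the paper prior to this corollary) that propagates a general superlinear Orlicz moment $m_{\bm{\Psi}}(f(t))$, so your appeal to Appendix \ref{sec:known} is vacuous. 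Saying that one can choose $\bm{\Psi}$ ``with arbitrarily slow superlinear growth'' does not help: no matter how slowly $\bm{\Psi}(r)/r\to\infty$, it is never dominated by a polynomial of degree $\leq 1$, and Theorem \ref{theo:prop_mom} gives nothing.

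What the paper actually does is prove the needed propagation from scratch. It takes the sharper form of de la Vall\'ee Poussin in which $\bm{\Psi}\in C^{\infty}$, $\bm{\Psi}(0)=\bm{\Psi}'(0)=0$, $\bm{\Psi}$ convex and $\bm{\Psi}'$ \emph{concave}. Multiplying \eqref{LFD} by $\bm{\Psi}(|v|^{2})$ and integrating gives an identity for $\frac{\d}{\d t}m_{\bm{\Psi}}(f(t))$ containing two terms. The first, involving $(|v_{\ast}|^{2}-|v|^{2})\bm{\Psi}'(|v|^{2})$, is shown to be nonpositive by symmetrisation and the monotonicity of $\bm{\Psi}'$. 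The second, involving $\bm{\Psi}''(|v|^{2})$, is controlled using the concavity of $\bm{\Psi}'$, which forces $0\leq\bm{\Psi}''(r)\leq\bm{\Psi}''(0)$, together with the elementary bound $|v|^{2}|v_{\ast}|^{2}-(v\cdot v_{\ast})^{2}\leq |v|^{2}|v-v_{\ast}|^{2}$. This yields $\frac{\d}{\d t}m_{\bm{\Psi}}(f(t))\leq 4\bm{\Psi}''(0)\|f_{\rm in}\|_{L^{1}}\|f_{\rm in}\|_{L^{1}_{2}}$, hence linear-in-time growth on $[0,T]$. The concavity of $\bm{\Psi}'$ is the structural input that makes the computation close; it is not something you can extract from the polynomial moment theorem.
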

\begin{proof}
Since $| \cdot |^2 \in L^1(\R^d,f_{\rm in}(v)\d v)$, we deduce from the de la Vall\'ee Poussin Theorem (see \cite[Theorem 8]{Lau}) that there exists a  convex function ${\bm{\Psi}}\in\C^\infty([0,+\infty))$ such that ${\bm{\Psi}}(0)={\bm{\Psi}}'(0)=0$, ${\bm{\Psi}}'$ is a concave function, ${\bm{\Psi}}'(r)>0$ for $r> 0$,
  $$\lim_{r\to+\infty}\frac{{\bm{\Psi}}(r)}{r}=\lim_{r\to+\infty} {\bm{\Psi}}(r)=+\infty, $$
  and
  \begin{equation}\label{eq:Poussin}
    \int_{\R^d}f_{\mathrm{in}}(v)\,{\bm{\Psi}}(| v|^2)\,\d v<\infty.
  \end{equation}
  Let us show that the bound \eqref{eq:Poussin} propagates over time for the associated solution $f(t,\cdot)$ to \eqref{LFD}. Mutliplying \eqref{LFD} with $ {\bm \Psi}(| v|^2)$ and integrating over $\R^d$, we get, after some integrations by parts, 
  \begin{equation}\label{eq:mom_evol}
    \dfrac{\d}{\d t}\int_{\R^{d}}f(t,v)\, {\bm \Psi}(|v|^{2})\d v=4\int_{\R^{d}}\int_{\R^{d}}f\,f_{\ast}|v-\vet |^{-2}\,\Lambda_{{\bm \Psi}}(v,\vet )\,\d \vet \d v ,
    \end{equation}
where 
\begin{equation*}
\Lambda_{ {\bm \Psi}}(v,\vet )=\frac{d-1}{2}\left[|\vet |^{2}-|v|^2\right] {\bm \Psi}'(|v|^{2})+\left[|v|^{2}|\vet |^{2}-(v \cdot \vet )^{2}\right]\, {\bm \Psi}''(|v|^{2}).\end{equation*}
First, a symmetry argument combined with the convexity of $ {\bm \Psi}$ enables to obtain that
\begin{equation*}\begin{split}
  &   \int_{\R^{d}}\int_{\R^{d}}f\,f_{\ast}|v-\vet |^{-2}\, \left[|\vet|^2-|v|^2\right] {\bm \Psi}'(|v|^{2}) \,\d \vet \d v\\
  &  \hspace{2cm} = \int_{\R^{d}}\int_{\R^{d}}f\,f_{\ast}|v-\vet |^{-2}\, \left[|v|^2-|\vet|^2\right]   {\bm \Psi}'(|\vet|^{2})\, \d v \d \vet\\
  &  \hspace{2cm} = - \frac12 \int_{\R^{d}}\int_{\R^{d}}f\,f_{\ast}|v-\vet |^{-2}\, \left(|v|^2-|\vet|^2\right)  \left[ {\bm \Psi}'(|v|^{2})- {\bm \Psi}'(|\vet |^{2})\right] \, \d v \d \vet \le 0.
  \end{split}\end{equation*}
Thus, \eqref{eq:mom_evol} becomes
$$ \dfrac{\d}{\d t}\int_{\R^{d}}f(t,v)\, {\bm \Psi}(|v|^{2})\d v \le 4\int_{\R^{d}}\int_{\R^{d}}f\,f_{\ast}|v-\vet |^{-2}\,\left[|v|^{2}|\vet |^{2}-(v \cdot \vet )^{2}\right]\,  {\bm \Psi}''(|v|^{2})\,\d \vet \d v. $$
Now, since $ {\bm \Psi}'$ is concave, $ {\bm \Psi}''$ is nonincreasing. This implies, with the convexity of $ {\bm \Psi}$, that
$$0\le  {\bm \Psi}''(r)\le  {\bm \Psi}''(0)\qquad \mbox{ for }\quad r\ge0.$$
The above inequality with the estimate
$0\le |v|^2|\vet|^2-(v\cdot\vet)^2\le |v|^2 \,|v-\vet|^2$
leads to 
\begin{equation}
  \dfrac{\d}{\d t}\int_{\R^{d}}f(t,v)\, {\bm \Psi}(|v|^{2})\,\d v \le  4  {\bm \Psi}''(0) \int_{\R^{d}}\int_{\R^{d}}f\,f_{\ast}\,|v|^2\,\d \vet\,\d v 
 \le   4  {\bm \Psi}''(0) \|f_{\mathrm{in}}\|_{L^1}  \|f_{\mathrm{in}}\|_{L^1_2}.  
\end{equation}
Consequently, for any $t\in[0,T]$,
$$\int_{\R^{d}}f(t,v)\, {\bm \Psi}(|v|^{2})\d v \le \int_{\R^{d}}f_{\mathrm{in}} (v)\, {\bm \Psi}(|v|^{2})\d v+4\, T\, {\bm \Psi}''(0) \|f_{\mathrm{in}}\|_{L^1}  \|f_{\mathrm{in}}\|_{L^1_2} .$$
Applying Proposition \ref{propo:GG2} to $f(t,\cdot)$ with $t\in[0,T]$ then completes the proof.  
\end{proof}
{\begin{rmq}
    If we assume that $f_{\mathrm{in}}\in L^1_s(\R^d)\cap L\log L(\R^d)$ for some $s>2$, then, denoting by $f$ a global weak solution to \eqref{LFD} associated to $f_{\mathrm{in}}$, we deduce from Theorem \ref{theo:prop_mom} that
    $$ \sup_{t\in [0,T]} \int_{\R^d} f(t,v) |v|^s \d v<\infty.$$
    In this case,  Proposition \ref{propo:GG2} immediately implies that \eqref{eq:estimatc_sol} holds. 
\end{rmq}}

The above results allow to prove the main result in the Introduction:

\begin{proof}[Proof of Theorem \ref{cor:MAIN}] The proof of Theorem \ref{cor:MAIN} is a simple consequence of the $\e$-Poincar\'e in the various cases. The case $\g=-2$ corresponds to Corollary \ref{cor:epsilon_poincare}. If $-2 < \g<0$, the result just comes from Proposition \ref{prop:GG} and the fact that solution $f(t) \in \mathcal{Y}_{0}(f_{\rm in})$ for any $t \in[0,T].$ \end{proof}

{We end this Section dedicated to the $\e$-Poincar\'e inequality by a simpler proof of Proposition \ref{propo:GG2} in the case $d=3$. We state the result in the simplified case in which $f \in L^{1}_{s}(\R^{d})$ with $s >2$. The alternative proof is based upon the Hardy-Littlewood-Sobolev inequality in dimension $d=3$.}
\begin{prop}
    Assume that $\g=-2$, $d=3$  and let a nonnegative mapping $f_{\mathrm{in}} \in L^1_2(\R^d)\cap L\log L(\R^d)$ satisfying \eqref{eq:Mass} be given. There exist $C_{0} >0$ depending only on $\|f_{\rm in}\|_{L^{1}}$ and $C_{1}>0$ depending only on $\|f_{\rm in}\|_{L^{1}_{2}}$ and $H(f_{\rm in})$ such that, for any $s >2$, any $\e >0$ and any $f \in \mathcal{Y}_{0}(f_{\mathrm{in}})$,
    \begin{multline*}
-\int_{\R^{3}}f^{2}\bm{c}_{\g}[f]\d v \leq \e\int_{\R^{3}}\left|\nabla \left(\langle v\rangle^{\frac{\g}{2}}f(v)\right)\right|^{2}\d v 
\\
+C_{0}\left(1+\exp\left(C_{1} \,\e^{-\frac{2}{s-2}}\,m_{s}(f)^{\frac{2}{s-2}}\right)\right)\int_{\R^{3}}f^{2}\langle v\rangle^{\g}\d v\,,
\end{multline*}
    where, for any $s \geq0$, the statistical moment $m_{s}(f)$ is defined as
$$
m_{s}(f):=\int_{\R^{3}}\langle v\rangle^{s}f(v)\d v\,.
$$
\end{prop}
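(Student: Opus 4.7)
The strategy mirrors that of the proof of Proposition \ref{propo:GG2}, but replaces the Lorentz-space HLS step by Hardy's inequality in $\R^3$, which is a direct consequence of the Hardy--Littlewood--Sobolev inequality at its critical exponent. Setting $\psi(v)=\langle v\rangle^{-1}f(v)$ and $F(v)=\langle v\rangle^{2}f(v)$, the same decomposition based on $|v-v_*|\gtrless\tfrac{1}{2}\langle v\rangle$ used at the beginning of the proof of Proposition \ref{propo:GG2} gives
$$
-\int_{\R^3}f^2\,\bm{c}_\g[f]\,\d v \leq 2^{-\g}(d-1)(\g+d)\,(I_1+I_2),
$$
with $I_1\leq \|f\|_{L^1}\|\psi\|_{L^2}^2$ and
$$
I_2 \leq J := \int_{\R^3\times\R^3} |v-v_*|^{-2}\,F(v_*)\,\psi^2(v)\,\d v\,\d v_*.
$$

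The decisive input in $\R^3$ is Hardy's inequality $\int_{\R^3}|v-v_*|^{-2}\psi^2(v)\,\d v\leq 4\|\nabla\psi\|_{L^2}^2$, uniformly in $v_*$, which corresponds precisely to the HLS endpoint matching the dimension $d=3$ and the singularity $|v-v_*|^{-2}$ (the Riesz exponent $d+\g=1$ coincides with the critical Hardy exponent $d-2=1$). Fubini yields $J\leq 4\|F\|_{L^1}\|\nabla\psi\|_{L^2}^2$. To introduce the small parameter $\e$, split $F=F_M^{\le}+F_M^{>}$ with $F_M^{>}=F\,\mathbf{1}_{\{F>M\}}$ and apply Hardy only to $F_M^{>}$, contributing $4\|F_M^{>}\|_{L^1}\|\nabla\psi\|_{L^2}^2$. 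For the bounded part $F_M^{\le}\leq M$, decompose the $v_*$-integration: on $\{|v-v_*|\geq 1\}$ use $|v-v_*|^{-2}\leq 1$, giving a $\|F\|_{L^1}\|\psi\|_{L^2}^2$ contribution; on $\{|v-v_*|<1\}$ pull out $F_M^{\le}\leq M$ and use $\int_{|v-v_*|<1}|v-v_*|^{-2}\,\d v_*=4\pi$, a finite integral specifically in dimension $d=3$, giving $4\pi M\|\psi\|_{L^2}^2$. Altogether,
$$
J\leq 4\|F_M^{>}\|_{L^1}\|\nabla\psi\|_{L^2}^2 + \bigl(\|F\|_{L^1}+4\pi M\bigr)\|\psi\|_{L^2}^2.
$$

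Finally, choose $M$ so that $4\|F_M^{>}\|_{L^1}\leq\e$. Using the inclusion $\{\langle v\rangle^2 f>M\}\subset\{|v|>N\}\cup\{f>M/(1+N^2)\}$, together with the moment bound $\int_{|v|>N}\langle v\rangle^2 f\,\d v\leq m_s(f)\,N^{-(s-2)}$ and the entropy bound $\int_{\{f>K\}}f\,\d v\leq H^+(f)/\log K$ (where $H^+(f):=\int f(\log f)_+\,\d v$ is controlled in terms of $H(f_{\mathrm{in}})$ and $\|f_{\mathrm{in}}\|_{L^1_2}$ by standard entropy/moment arguments), an optimization in $N\sim (m_s(f)/\e)^{1/(s-2)}$ forces the minimal $M$ to be of exponential form in $(m_s(f)/\e)^{2/(s-2)}$. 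Substituting this $M$ into the coefficient $4\pi M$ of $\|\psi\|_{L^2}^2=\int f^2\langle v\rangle^{-2}\,\d v$ produces the announced constant $C_0\bigl(1+\exp(C_1\e^{-2/(s-2)}m_s(f)^{2/(s-2)})\bigr)$. The main obstacle is the quantitative bookkeeping of this last step: the interplay between the moment estimate (controlling large velocities) and the entropy estimate (controlling large values of $f$) must be tuned precisely to yield the stated exponent while keeping $C_1$ depending only on $\|f_{\mathrm{in}}\|_{L^1_2}$ and $H(f_{\mathrm{in}})$. The analytic simplification over the proof of Proposition \ref{propo:GG2} is essentially a one-line calculation made possible by the coincidence $d+\g=d-2=1$ specific to $d=3$, $\g=-2$.
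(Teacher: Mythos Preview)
Your argument is correct in spirit but takes a different route from the paper's. The paper's alternative proof applies the Hardy--Littlewood--Sobolev inequality at the \emph{subcritical} exponent $p=2r=\tfrac{9}{4}$ to the singular piece $J_R=\int_{|v-v_*|\le 1} f\mathbf{1}_{\{f>R\}}(v_*)\,|v-v_*|^{-2}\phi^2(v)\,\d v\,\d v_*$, then interpolates $\|f\mathbf{1}_{\{f>R\}}\|_{L^{9/4}}$ and $\|\phi\|_{L^{9/2}}$ between $L^1$ (weighted) and $L^6$, and finally uses Sobolev to land on $\|\nabla(\langle\cdot\rangle^{-1}f)\|_{L^2}^2$. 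The smallness comes from the factor $\log(R)^{-\theta_s/3}$ produced by the entropy/moment bound on $\|\langle\cdot\rangle^2 f\mathbf{1}_{\{f>R\}}\|_{L^1}$. Note the splitting is by level sets of $f$, and the HLS step is applied \emph{after} the cutoff, which is why the $L^1$--$L^p$ endpoint is avoided.

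Your approach instead stays within the framework of Proposition~\ref{propo:GG2}: the same $\psi,F$ decomposition, the same splitting $F=F_M^{\le}+F_M^{>}$ by level sets of $F=\langle v\rangle^2 f$, but you replace the Lorentz--Sobolev step by the translation-invariant Hardy inequality $\int_{\R^3}|v-v_*|^{-2}\psi^2\,\d v\le 4\|\nabla\psi\|_{L^2}^2$. This is arguably cleaner (a one-line coincidence in $d=3$), though it is Hardy rather than a consequence of HLS proper. What the paper's $L^{9/4}$ argument buys is a structurally different path that genuinely uses HLS and produces the decay factor $\log(R)^{-\theta_s/3}$ in a single interpolation step; what your approach buys is simplicity and a direct parallel with Proposition~\ref{propo:GG2}.

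One quantitative caveat: your final optimization does not quite deliver the stated $\varepsilon$-exponent. Requiring $(1+N^2)H^+(f)/\log K\lesssim\varepsilon$ with $N^2\sim(m_s/\varepsilon)^{2/(s-2)}$ forces $\log M\gtrsim N^2/\varepsilon\sim m_s^{2/(s-2)}\varepsilon^{-s/(s-2)}$, not $\varepsilon^{-2/(s-2)}$. You flag this bookkeeping as the ``main obstacle'' but do not actually close it; in fact, this scheme produces an extra factor $\varepsilon^{-1}$ that cannot be absorbed into $C_1$. (To be fair, tracing the paper's own proof gives $\varepsilon^{-3s/(s-2)}$, so the exponent in the statement appears optimistic regardless.) The structural proof stands; only the precise dependence on $\varepsilon$ is weaker than claimed.
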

\begin{proof} Being the proof quite similar to that of Prop. \ref{propo:GG2}, let us describe only how HLS inequality provides an alternative way to estimate the most singular term.  Define, for any function $\phi$, 
\begin{equation*}
J_{R} := \int_{|v-\vet|\leq1}f\ind_{f >R}(\vet)|v-\vet|^{-2}\phi^{2}(v)\d v\d\vet,\qquad R\geq1\,.
\end{equation*}
Applying then  Hardy-Littlewood-Sobolev \cite[Theorem 4.3]{LL} with $1 < p, r < \infty$, it holds
$$J_{R} \leq C_{\textrm{HLS}} \|f\ind_{f>R}\|_{L^{p}}\|\phi\|^{2}_{L^{2r}}, \qquad \frac{1}{p}+\frac{2}{3}=2-\frac{1}{r}.$$
We choose then $p=2r$ which amounts to $p=\frac{9}{4}$ (notice that, for $d >3$, the choice of $p=2r$ would amount to $r=\frac{3d}{4d-4} \le1$ which is not admissible). 
Using then Lebesgue's interpolation,
\begin{align*}
\| f\ind_{f >R} \|_{L^{\frac{9}{4}}}\leq \| \langle\cdot\rangle^{2} f\ind_{f >R} \|^{\frac{1}{3}}_{L^{1}}\| \langle\cdot\rangle^{-1} f \|^{\frac{2}{3}}_{L^{6}} , \qquad 
 \|\phi \|_{L^{\frac{9}{4}}}\leq \| \langle \cdot \rangle^{2}\phi \|^{\frac{1}{3}}_{L^{1}}\| \langle \cdot \rangle^{-1} \phi \|^{\frac{2}{3}}_{L^{6}}\,.
\end{align*}
In addition, for $\theta_{s}=1-\frac{2}{s}$, for $s>2$, it holds that
$$\| \langle\cdot\rangle^{2} f\ind_{f >R} \|_{L^{1}}\leq \| \langle\cdot\rangle^{s} f\ind_{f >R} \|^{1-\theta_s}_{L^{1}}\| f\ind_{f >R} \|^{\theta_s}_{L^{1}}\leq \| \langle\cdot\rangle^{s} f \|^{1-\theta_s}_{L^{1}}\bigg(\frac{\| f\log f  \|_{L^{1}}}{\log(R)}\bigg)^{\theta_s}\,.$$
 Overall,
\begin{equation*}
J_{R} \leq  \frac{C}{\log(R)^{\frac{\theta_{s}}{3}}}\| \langle\cdot\rangle^{s} f \|^{\frac{1-\theta_s}{3}}_{L^{1}}\| f\log f  \|^{\frac{\theta_s}{3}}_{L^{1}}\| \langle \cdot \rangle^{2}\phi \|^{\frac{2}{3}}_{L^{1}}\| \langle \cdot \rangle^{-1} f \|^{\frac{2}{3}}_{L^{6}}\| \langle \cdot \rangle^{-1} \phi \|^{\frac{4}{3}}_{L^{6}}\,.
\end{equation*}
When considering a solution $f$ with moment $s>2$ and entropy bounded, this implies for $\phi=f$ that
\begin{equation*}
J_R \leq \frac{C(f_{\mathrm{in}},t)}{\log(R)^{\frac{\theta_{s}}{3}}}\| \langle \cdot \rangle^{-1} f \|^{2}_{L^{6}} \leq \frac{\tilde{C}(f_{\mathrm{in}},t)}{\log(R)^{\frac{\theta_{s}}{3}}}\| \nabla\big(\langle \cdot \rangle^{-1} f\big ) \|^{2}_{L^{2}}\end{equation*}
where we used Sobolev inequality and $\tilde{C}(f_{\rm in},t)$ depends on $\|f(t)\|_{L^{1}_{s}}$, $H(f_{\rm in})$ and energy. Then, for any $\e >0$, one obtains
$$J_{R} \leq  \varepsilon\| \nabla\big(\langle \cdot \rangle^{-1} f\big ) \|^{2}_{L^{2}}\,,\qquad \varepsilon>0\,,$$
by taking $R>1$ sufficiently large so that $R\geq \exp\left((\tilde{C}(f_{\rm in},t)/\varepsilon)^{\frac{3}{\theta_s}}\right)$.
\end{proof}

 \section{Appearances of $L^{p}$-norms in the soft potential case}\label{sec:DeG}
 
We investigate here the regularizing effects induced by the $\e$-Poincar\'e inequality and in particular the appearance of $L^{p}$-norms $(p >1)$. 
 \subsection{$L^{p}$-estimates for $1 < p < \infty$}\label{sec:Lp} More precisely, for any weak solution $f(t,\cdot)$ to \eqref{LFD1}, we introduce the notation
\begin{equation}\label{eq:notMsp}
\lM_{s,p}(t)=\int_{\R^{d}}f(t,v)^{p}\langle v\rangle^{s}\d v\,, \qquad \lD_{s,p}(t)=\int_{\R^{d}}\left|\nabla \left(\langle v\rangle^{\frac{s}{2}}f^{\frac{p}{2}}(t,v)\right)\right|^{2}\d v\end{equation}
where $p \in (1,\infty)$ is fixed in all the sequel and $s \in \R$. We also set
$$\lm_{s}(t)=\int_{\R^{d}}f(t,v)\langle v\rangle^{s}\d v, \qquad t \geq0, s \in \R.$$
One has the following evolution of $\lM_{s,p}(t)$. 
\begin{prop}
  \label{prop:L2} Let $-2\le\g<0$ and $T>0$. Let a nonnegative initial datum {$f_{\mathrm{in}} \in L^{1}_{2}(\R^{d}) \cap L\log L(\R^{d})$  satisfying  \eqref{eq:Mass} be given}. Let  $f(t,\cdot)$ be a weak-solution to \eqref{LFD}. Given $s \geq0$ and $1 < p<\infty$, there exists some positive constant $\bar{C}_{p,\g,s}(f_{\mathrm{in}})$ depending on  $p$, $\g$, $s$, $T$ and $f_{\rm in}$,  such that
\begin{equation}\label{final-L2}
 \dfrac{\d}{\d t}\lM_{s,p}(t) +  \frac{K_{0}(p-1)}{2p}\lD_{s+\g,p}(t) \leq \bar{C}_{p,\g,s}(f_{\mathrm{in}}) \lm_{\frac{s+\g}{p}}(t)^{p}\,
\end{equation} holds for any $t\in[0,T].$
\end{prop}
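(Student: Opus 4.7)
The plan is to multiply the equation $\partial_t f=\nabla\cdot(\mathcal{A}[f]\nabla f-\bm{b}[f]f)$ by $p\,f^{p-1}\langle v\rangle^s$ and integrate over $\R^d$. Two successive integrations by parts---the second one applied to the drift contribution using the identity $\nabla\cdot\bm{b}[f]=\bm{c}_\g[f]$ that follows directly from the definitions of $b$ and $c$---yield
$$\frac{\d}{\d t}\lM_{s,p}(t)=-\frac{4(p-1)}{p}\int_{\R^d}\langle v\rangle^s\,\mathcal{A}[f]\nabla f^{p/2}\cdot\nabla f^{p/2}\,\d v+\mathcal{R}_1+\mathcal{R}_2+\mathcal{R}_3,$$
where $\mathcal{R}_1=-p\int f^{p-1}\mathcal{A}[f]\nabla f\cdot\nabla\langle v\rangle^s\,\d v$, $\mathcal{R}_2=-(p-1)\int f^p\langle v\rangle^s\bm{c}_\g[f]\,\d v$ and $\mathcal{R}_3=\int f^p\bm{b}[f]\cdot\nabla\langle v\rangle^s\,\d v$. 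The ellipticity estimate $\mathcal{A}[f](v)\xi\cdot\xi\ge K_0\langle v\rangle^\g|\xi|^2$ for weak solutions (recalled in Appendix \ref{sec:known}, with $K_0>0$ depending only on $f_{\rm in}$) converts the principal term into $-\frac{4K_0(p-1)}{p}\int\langle v\rangle^{s+\g}|\nabla f^{p/2}|^2\,\d v$; expanding $|\nabla(\langle v\rangle^{(s+\g)/2}f^{p/2})|^2$ identifies this quantity with $-\frac{4K_0(p-1)}{p}\lD_{s+\g,p}$ modulo remainders of lower weight controlled by $\int f^p\langle v\rangle^{s+\g-2}\,\d v$.

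The three auxiliary terms are then estimated as follows. For $\mathcal{R}_1$ and $\mathcal{R}_3$, one uses $|\nabla\langle v\rangle^s|\lesssim\langle v\rangle^{s-1}$ together with the pointwise bound $|\bm{b}[f](v)|\lesssim\langle v\rangle^{\g+1}$ (a consequence of conservation of mass, momentum and energy) and the ellipticity of $\mathcal{A}[f]$; Cauchy--Schwarz in the weighted inner product induced by $\mathcal{A}[f]$ followed by Young's inequality then dominate $\mathcal{R}_1+\mathcal{R}_3$ by $\eta\,\lD_{s+\g,p}+C_\eta\int f^p\langle v\rangle^{s+\g}\,\d v$ for any $\eta>0$. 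The crucial singular contribution $\mathcal{R}_2$ is controlled by applying Theorem \ref{cor:MAIN} with $\phi=\langle v\rangle^{s/2}f^{p/2}$, giving $\mathcal{R}_2\le(p-1)\varepsilon\,\lD_{s+\g,p}+(p-1)C(\varepsilon)\int f^p\langle v\rangle^{s+\g}\,\d v$. Choosing $\varepsilon$ and $\eta$ small enough that every $\lD_{s+\g,p}$ contribution on the right-hand side is reabsorbed into a fraction of the principal dissipation produces
$$\frac{\d}{\d t}\lM_{s,p}(t)+\frac{K_0(p-1)}{p}\lD_{s+\g,p}(t)\le A\int_{\R^d}f^p(t,v)\langle v\rangle^{s+\g}\,\d v,$$
where $A$ depends on $p,\g,s,f_{\rm in}$ (and on $T$ when $\g=-2$, through Corollary \ref{cor:epsilon_poincare}).

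To conclude, I recast the residual integral as $\|\psi\|_{L^2}^2$ with $\psi=\langle v\rangle^{(s+\g)/2}f^{p/2}$ and interpolate: Hölder between $L^{2/p}$ and the Sobolev exponent $L^{2d/(d-2)}$ combined with the Sobolev embedding gives
$$\|\psi\|_{L^2}^2\le C\,\|\psi\|_{L^{2/p}}^{2\alpha}\|\nabla\psi\|_{L^2}^{2(1-\alpha)},\qquad \alpha=\frac{2}{(p-1)d+2}\in(0,1).$$
Since $\|\psi\|_{L^{2/p}}^{2}=\lm_{(s+\g)/p}(t)^p$ and $\|\nabla\psi\|_{L^2}^2=\lD_{s+\g,p}(t)$, Young's inequality with conjugate exponents $(1/(1-\alpha),1/\alpha)$ dominates the interpolation by $\delta\,\lD_{s+\g,p}+C_\delta\,\lm_{(s+\g)/p}^p$; taking $\delta$ small absorbs the last $\lD_{s+\g,p}$ on the right into the dissipation on the left, yielding exactly the coefficient $\frac{K_0(p-1)}{2p}$ claimed in \eqref{final-L2}. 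The main obstacle is the careful bookkeeping of all the small-parameter absorptions so that the coercive coefficient remains positive throughout; the case $\g=-2$ additionally requires invoking Corollary \ref{cor:epsilon_poincare} in place of Proposition \ref{prop:GG}, which is the reason why $\bar{C}_{p,\g,s}(f_{\rm in})$ carries a $T$-dependence in the critical case.
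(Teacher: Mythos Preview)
Your overall strategy---multiply by $p\,f^{p-1}\langle v\rangle^s$, apply ellipticity, control the $\bm{c}_\g$ term via the $\e$-Poincar\'e inequality, then interpolate the residual $\int f^p\langle v\rangle^{s+\g}$ between $L^1$ and the Sobolev endpoint---matches the paper exactly, including the final interpolation step.

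There is however a genuine gap in your treatment of $\mathcal{R}_3$ when $\g\in[-2,-1)$. The pointwise bound $|\bm{b}[f](v)|\lesssim\langle v\rangle^{\g+1}$ you invoke is \emph{false} in this range: since $|\bm{b}[f](v)|\le (d-1)\int|v-w|^{\g+1}f(w)\,\d w$ and $\g+1<0$, the kernel is singular and the conserved quantities (mass, momentum, energy) give only $f\in L^1_2$, which does not control this convolution pointwise. The paper handles $\mathcal{R}_3$ differently: it observes that
\[
|\mathcal{R}_3|\le s(d-1)\int_{\R^{2d}}\langle v\rangle^{s-1}f^p(v)\,|v-v_\ast|^{\g+1}f(v_\ast)\,\d v_\ast\d v
\;\sim\; -\int\langle v\rangle^{s-1}f^p\,\bm{c}_{\g+1}[f]\,\d v,
\]
and then applies Theorem~\ref{cor:MAIN} a \emph{second} time, now with exponent $\g+1$ in place of $\g$ and $\phi=\langle\cdot\rangle^{(s-1)/2}f^{p/2}$, yielding exactly the desired $\delta\,\lD_{s+\g,p}+C_\delta\lM_{s+\g,p}$ bound. (For $\g\ge-1$ your pointwise argument does work, and the paper treats that sub-case directly as well.)

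A minor remark: your Cauchy--Schwarz treatment of $\mathcal{R}_1$ is a legitimate alternative to the paper's approach, which instead integrates $\mathcal{R}_1$ by parts once more to eliminate $\nabla f$ entirely and produce only zero-order terms. Your route implicitly requires an \emph{upper} bound $\mathcal{A}[f]\nabla\langle v\rangle^s\cdot\nabla\langle v\rangle^s\lesssim s^2\langle v\rangle^{2s+\g}$, which does hold since $\g+2\ge 0$ and $\|f\|_{L^1_{\g+2}}\le\|f_{\rm in}\|_{L^1_2}$, so this step is fine.
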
 
\begin{proof} One checks easily that, for any $s \geq 0$,
 \begin{multline*}
\frac{1}{p}\frac{\d}{\d t} \int_{\R^{d}} f^p(t,v) \langle v\rangle^{s} \d v =\int_{\R^{d}}\langle v\rangle^{s}f^{p-1}(t,v)\nabla \cdot \left(\mathcal{A}[f]\nabla f-\bm{b}[f] f\right)\d v\\
=-(p-1)\int_{\R^{d}}\langle v\rangle^{s}f^{p-2}\mathcal{A}[f]\nabla f\cdot   \nabla f\d v + (p-1)\int_{\R^{d}}\langle v\rangle^{s}f^{p-1} \bm{b}[f]\cdot \nabla f\d v\\
-s\int_{\R^{d}}\langle v\rangle^{s-2}\,f^{p-1}\left(\mathcal{A}[f]\nabla f\right)\cdot v\d v +s \int_{\R^{d}}\langle v\rangle^{s-2}f^{p} \bm{b}[f]\cdot v\d v
\end{multline*}
where we used that $\nabla f^{p-1}=(p-1)f^{p-2}\nabla f,$ $\nabla \langle v\rangle^{s}=s\langle v\rangle^{s-2} v.$ Then, noticing that
$$\mathcal{A}[f]\nabla f^{\frac{p}{2}}\cdot \nabla f^{\frac{p}{2}}=\frac{p^{2}}{4}f^{p-2}\mathcal{A}[f]\nabla f\cdot \nabla f$$
and using the uniform ellipticity of the diffusion matrix $\mathcal{A}[f]$ (recall Proposition \ref{diffusion}), we deduce that
$$(p-1)\int_{\R^{d}}\langle v\rangle^{s}f^{p-2}\mathcal{A}[f]\nabla f\cdot   \nabla f\d v 
 \geq \frac{4K_{0}(p-1)}{p^{2}} \int_{\R^{d}}\langle v\rangle^{s+\g}\,\left|\grad f^{\frac{p}{2}}\right|^{2}\d v\,.$$
Moreover, writing
\begin{equation*}
\nabla \left(\langle v\rangle^{\frac{s+\g}{2}}\,f^{\frac{p}{2}}\right)=\langle v\rangle^{\frac{s+\g}{2}}\nabla f^{\frac{p}{2}} + \frac{s+\g}{2}v\,\langle v\rangle^{\frac{s+\g}{2}-2}f^{\frac{p}{2}}\,,
\end{equation*}
from which
\begin{equation}\label{eq:Gradient}
\langle v\rangle^{s+\g}\left|\nabla f^{\frac{p}{2}}\right|^{2} \geq \frac{1}{2}\left|\nabla \left(\langle v\rangle^{\frac{s+\g}{2}}f^{\frac{p}{2}}\right)\right|^{2} - \frac{1}{4}(s+\g)^{2} \langle v\rangle^{s+\g-2}f^{p}(v), \end{equation}
and 
\begin{multline*}
(p-1)\int_{\R^{d}}\langle v\rangle^{s}f^{p-2}\mathcal{A}[f]\nabla f\cdot   \nabla f\d v 
 \geq \frac{2K_{0}(p-1)}{p^{2}} \int_{\R^{d}}\left|\nabla \left(\langle v\rangle^{\frac{s+\g}{2}}f^{\frac{p}{2}}\right)\right|^{2}\d v\\
 -\frac{K_{0}(p-1)(s+\g)^{2}}{p^{2}}\int_{\R^{d}} \langle v\rangle^{s+\g-2}f^{p}(v)\d v\,.\end{multline*}
We also have
\begin{multline*}
 \int_{\R^{d}}\langle v\rangle^{s} f^{p-1}  \bm{b}[f] \cdot \grad f  \d v
 =  -\frac{1}{p} \int_{\R^{d}}  f^p \grad \cdot \Big(\bm{b}[f] \langle v\rangle^{s}\Big) \d v\\
=-\frac{s}{p}\int_{\R^{d}}\langle v\rangle^{s-2} f^p(t,v)  \bm{b}[f]\cdot v\,\d v
- \frac{1}{p}\int_{\R^{d}}\langle v\rangle^{s} f^p(t,v) \bm{c}_{\g}[f]\, \d v.
\end{multline*}
Therefore,   we get 
\begin{multline*}
 \dfrac{\d}{\d t}\lM_{s,p}(t) + \frac{2K_{0}(p-1)}{p} \lD_{s+\g,p}(t) \\
 \leq  -  {(p-1)} \int_{\R^{d}}\langle v\rangle^{s} f^p(t,v)\,\bm{c}_{\g}[f]\d v
+s \int_{\R^{d}}\langle v\rangle^{s-2} f^p(t,v)\,\left(\bm{b}[f]\cdot v\right)\d v  \\
+ \frac{K_{0}(p-1)(s+\g)^{2}}{p}\int_{\R^{d}} \langle v\rangle^{s+\g-2}f^{p}(v)\d v
-sp\int_{\R^{d}}\langle v\rangle^{s-2}f^{p-1} \left(\mathcal{A}[f]\nabla f\cdot v \right)\d v.
\end{multline*}
Let us investigate more carefully the last term. Integration by parts shows that \begin{align*}
-sp\int_{\R^{d}}\langle v\rangle^{s-2}f^{p-1} \left(\mathcal{A}[f]\nabla f\cdot v \right)\d v &= -s\int_{\R^{d}}\nabla f^{p} \cdot \Big(\mathcal{A}[f]
\, \langle v\rangle^{s-2}v \Big) \,\d v\\
&= s\int_{\R^{d}} f^{p} \; \nabla\cdot \Big(\mathcal{A}[f]\,\langle v\rangle^{s-2} v \Big) \,\d v\,.
\end{align*}
Using the product rule 
$$\nabla\cdot \Big(\mathcal{A}[f]\,\langle v\rangle^{s-2} v \Big)=\langle v\rangle^{s-2}\,{\bm{b}}[f]\cdot v\, + \mathrm{Trace}\left(\mathcal{A}[f] \cdot \,
D_{v}\left(\langle v\rangle^{s-2} v\right)\right), $$ 
where $D_{v}\big( \langle v\rangle^{s-2} v\big)$ is the matrix with entries $\partial_{v_{i}}\big( \langle v\rangle^{s-2} v_{j}\big)$, $i,j=1,\ldots,d$, or more compactly, 
$$D_{v}\big( \langle v\rangle^{s-2} v\big)=\langle v\rangle^{s-4}\bm{A}(v)\,,$$
where $\bm{A}(v)=\langle v\rangle^{2}\mathbf{Id}+(s-2)\,v\otimes v$, $v \in \R^{d}$. We obtain

\begin{multline}\label{eq:dtMs}
 \dfrac{\d}{\d t}\lM_{s,p}(t) + \frac{2K_{0}(p-1)}{p}\lD_{s+\g,p}(t) \leq -  {(p-1)}\int_{\R^{d}}\langle v\rangle^{s} \bm{c}_{\g}[f(t)](v)\,f^p(t,v) \d v\\
+s \int_{\R^{d}}\langle v\rangle^{s-2}\,f^p(t,v) \left(\bm{b}[f]\cdot v\right)\d v  +  \frac{K_{0}(p-1)(s+\g)^{2}}{p}\int_{\R^{d}}\langle v\rangle^{s+\g-2}f^{p}(t,v)\d v
\\
+s\int_{\R^{d}}\langle v\rangle^{s-4}f^{p}\,\, \mathrm{Trace}\left(\mathcal{A}[f]\cdot \bm{A}(v)\right)\d v\,.
\end{multline}
We denote by $I_{1},I_{2},I_{3},I_{4}$ the various terms on the right-hand-side of \eqref{eq:dtMs}
and we control each term separately. Applying Theorem \ref{cor:MAIN} with   $\phi =\langle \cdot \rangle^{\frac{s}{2}}f^{\frac{p}{2}}(t,v)$, we deduce that, for any $\delta \in (0,1)$,
$$|I_{1}| \leq  {(p-1)}\left(\delta\,\lD_{s+\g,p}(t)+C_{1}\lM_{s+\g,p}(t)\right)\,,$$
where $C_{1}$ is defined in Theorem \ref{cor:MAIN}. For the term $I_{2}$,  it  holds that
$$|I_{2}| \leq s\int_{\R^{d}}\langle v\rangle^{s-1}f^{p}(t,v)\,|\bm{b}[f(t)](v)|\d v \leq s(d-1)\,\int_{\R^{2d}}\langle v\rangle^{s-1}f^{p}(t,v)|v-\vet|^{\g+1}f(t,\vet)\d \vet\d v\,.$$
Therefore, if $\g+1 <0$, applying Theorem \ref{cor:MAIN} with  {$\bm{c}_{\g+1}[f(t)]$ instead of $\bm{c}_{\g}[f(t)]$} and $\phi^{2}=\langle \cdot\rangle^{s-1}f^{p}(t)$, we get
$$|I_{2}| \leq  {\frac{s}{d+\g+1}}\left(\delta\,\lD_{s+\g,p}(t) + C_{1}\,\lM_{s+\g,p}(t)\right),$$
 whereas, if $\g+1 >0$, one has obviously $|I_{2}| \leq s(d-1)\|\langle \cdot\rangle^{\gamma+1}f(t)\|_{L^{1}}\lM_{s+\g,p}(t).$
In both cases, for any  {$\delta >0$}, 
$$|I_{2}| \leq  {s \,C(d,\g)}\left(\delta\,\lD_{s+\g,p}(t) + C_{1}\,\lM_{s+\g,p}(t)\right).$$
Clearly,
$$|I_{3}| \leq  \frac{K_{0}(p-1)(s+\g)^{2}}{p}\lM_{s+\g,p}(t).$$
For the term $I_{4}$, one checks easily that, for any $i,j \in \{1,\ldots,d\}$,
$$\left|\mathcal{A}_{i,j}[f]\right| \leq |\cdot|^{\g+2}\ast f, \qquad \left|\bm{A}_{i,j}(v)\right| \leq s\langle v\rangle^{2}, $$
and 
$$|I_{4}| \leq d^{2}s^{2}\int_{\R^{2d}}\langle v\rangle^{s-2}f^{p}(t,v)|v-\vet|^{\g+2}f(t,\vet)\d v\d\vet.$$
One has, since $\g+2 \geq0$,
$$|I_{4}| \leq s^{2}\,d^{2}\|\langle \cdot \rangle^{\g+2}f(t)\|_{L^{1}}\,\|\langle\cdot\rangle^{\g+s}f^{p}(t)\|_{L^{1}}=s^{2}\,d^{2}\|\langle \cdot \rangle^{\g+2}f(t)\|_{L^{1}}\,\lM_{s+\g,p}(t).$$
Overall, recalling mass and energy conservation to estimate all the weighted $L^{1}$-terms, one sees that, for any ${\delta \in (0,1)}$,  there is some positive constant $C_{p,\delta, \g}(f_{\mathrm{in}})$ depending on  $f_{\mathrm{in}}$, $\g$, $\delta$  and  $p$ such that
\begin{equation}\label{eq:Ms+g0}\begin{split} \dfrac{\d}{\d t}\lM_{s,p}(t) &+ \frac{2K_{0}(p-1)}{p}\lD_{s+\g,p}(t) \\
&\leq C_{p,\delta,\g}(f_{\mathrm{in}})(1+s^{2})\lM_{s+\g,p}(t) +   {(sC(d,\g)+p)}\delta\lD_{s+\g,p}(t)\,.\end{split}\end{equation}
Picking then $\delta \in (0,1)$ such that $ {(sC(d,\g)+p)}\delta \leq \frac{K_{0}(p-1)}{p}$
 one deduces that
\begin{equation}\label{eq:Ms+gs0}
\dfrac{\d}{\d t}\lM_{s,p}(t) + \frac{K_{0}}{q}\lD_{s+\g,p}(t)
\leq \widetilde{C}_{p,\g,s}(f_{\mathrm{in}})\lM_{s+\g,p}(t), \qquad s \geq0,\quad \frac{1}{p}+\frac{1}{q}=1\,,\end{equation}
for some positive constant $\widetilde{C}_{p,\g,s}(f_{\rm in})$ depending only on $p$, $\g$, $s$ and $f_{\rm in}$. We now estimate the right-hand-side $\lM_{s+\g,p}(t)$ by suitable interpolation between $L^{1}$-moment of $f$ and $\lD_{s+\g,p}(t).$ First, observe that Sobolev'inequality reads
\begin{equation}\label{eq:sobo-p}
\left\|\langle \cdot\rangle^{\frac{s+\g}{p}}f\right\|_{L^{q}}^{p}=\left\|\langle \cdot\rangle^{\frac{s+\g}{2}}f^{\frac{p}{2}}\right\|_{L^{\frac{2d}{d-2}}}^{2} \leq C_{\mathrm{Sob}}\lD_{s+\g,p}(t), \qquad q=\frac{p\,d}{d-2}.\end{equation}
Now, recall the interpolation inequality
\begin{equation}\label{int-ineq}
\|\langle \cdot \rangle^{a}g\|_{L^{r}} \leq \|\langle \cdot \rangle^{a_{1}}g\|_{L^{r_{1}}}^{\theta}\,\|\langle \cdot \rangle^{a_{2}}g\|_{L^{r_{2}}}^{1-\theta}\,,
\end{equation}
with 
$$\frac{1}{r}=\frac{\theta}{r_{1}}+\frac{1-\theta}{r_{2}}, \quad a=\theta\,a_{1}+(1-\theta)a_{2},  \quad \theta \in (0,1).$$
Applying this with $r=p$, $r_{1}=1,$ $r_{2}=q=\frac{pd}{d-2} >p$  (so that $\theta=\frac{2}{d(p-1)+2}$)
and $a=a_{1}=a_{2}=\frac{s+\g}{p}$ we deduce
\begin{equation}\begin{split}\label{eq:ms+gMs+g}
\lM_{s+\g,p}(t) &\leq \lm_{\frac{s+\g}{p}}(t)^{\frac{2p}{d(p-1)+2}}\,\left\|\langle \cdot\rangle^{\frac{s+\g}{p}}f\right\|_{L^{q}}^{\frac{d(p-1)p}{d(p-1)+2}}\\
&\leq C_{\mathrm{Sob}}^{\frac{d(p-1)}{d(p-1)+2}}\lm_{\frac{s+\g}{p}}(t)^{\frac{2p}{d(p-1)+2}}\,\lD_{s+\g,p}(t)^{\frac{d(p-1)}{d(p-1)+2}}\end{split}\end{equation}
where we used \eqref{eq:sobo-p} in that last estimate. Now, Young's inequality implies that there is $C   >0$ such that, for any $\alpha >0$,  
\begin{equation*}\label{eq:MsgYo}
\,\lM_{s+\g,p}(t) \leq C {\alpha}^{-\frac{d(p-1)}{2}} \lm_{\frac{s+\g}{p}}(t)^{p}+\alpha\lD_{s+\g,p}(t).\end{equation*}
Choosing now $\alpha >0$ such that $\widetilde{C}_{p,\g,s}(f_{\rm in})\alpha= \frac{K_{0}(p-1)}{2p}$, we end up with
$$\dfrac{\d}{\d t}\lM_{s,p}(t) + \frac{K_{0}}{2q}\lD_{s+\g,p}(t) \leq \bar{C}_{p,\g,s}(f_{\mathrm{in}})\lm_{\frac{s+\g}{p}}(t)^{p}\,.$$
This shows \eqref{final-L2}.
\end{proof}

The above result provides the following \emph{appearance} for $\lM_{s,p}(\cdot)$. 

\begin{theo}\label{theo:boundedL2}  Let $-2\le \g <0$, $s\ge0$, $p>1$ and $T>0$. Let a nonnegative initial datum $f_{\mathrm{in}} \in L^{1}_{2}(\R^{d}) \cap L\log L(\R^{d})$ satisfying  \eqref{eq:Mass} be given. Assume additionally that
{  \begin{equation}\label{eq:extra_assumption}
    f_{\mathrm in} \in L^{1}_{\mu(s,p)}(\R^d), \qquad \mu(s,p):= \frac{2s-\g d(p-1)}{2p}\,,
    \end{equation}}
and  let  $f(t,\cdot)$ be a weak-solution to \eqref{LFD}. Then, there exists a constant $c_{s,\g,p}(f_{\mathrm{in}})$  such that
\begin{equation}\label{appearLMs}
\lM_{s,p}(t) \leq c_{s,\g,p}(f_{\rm in})\max\left(1,t^{-\frac{d(p-1)}{2}}\right),\qquad  {t\in(0,T)}\,.
\end{equation} \end{theo}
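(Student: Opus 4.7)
The plan is to convert the production-dissipation inequality \eqref{final-L2} of Proposition \ref{prop:L2} into a closed ordinary differential inequality of ultracontractive type $y' \leq A - B\,y^{1+\beta}$ for $y(t) = \lM_{s,p}(t)$, by using a weighted Gagliardo-Nirenberg type interpolation to bound $\lM_{s,p}(t)$ from above by the dissipation $\lD_{s+\gamma,p}(t)$ and a controlled moment of $f$. An ODE comparison then yields \eqref{appearLMs}.

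First, I would apply the weighted interpolation inequality \eqref{int-ineq} with $r=p$, $r_1=1$, $r_2=q=pd/(d-2)$ to $f(t,\cdot)$ in order to interpolate $\|\langle\cdot\rangle^{s/p}f\|_{L^p}$ between a weighted $L^1$-moment and the weighted $L^q$-norm controlled by the Sobolev embedding \eqref{eq:sobo-p}. A direct computation gives the interpolation exponent $\theta = 2/[d(p-1)+2]$, and the matching of the weights forces the $L^1$ exponent to be precisely $a_1 = (2s - \gamma d(p-1))/(2p) = \mu(s,p)$, which is exactly the moment provided by the extra assumption \eqref{eq:extra_assumption}. Combined with \eqref{eq:sobo-p}, this produces
\[\lM_{s,p}(t) \leq C\,\lm_{\mu(s,p)}(t)^{p\theta}\,\lD_{s+\gamma,p}(t)^{1-\theta},\]
which I rewrite in reverse form as
\[\lD_{s+\gamma,p}(t) \geq c\,\lm_{\mu(s,p)}(t)^{-p\theta/(1-\theta)}\,\lM_{s,p}(t)^{1+\beta}, \qquad \beta := \frac{2}{d(p-1)}.\]

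Second, since moderately soft potentials propagate $L^1$-moments with at most linear growth in time (as already invoked in Corollary \ref{cor:epsilon_poincare} and recalled in Appendix \ref{sec:known}), the hypothesis $f_{\rm in}\in L^1_{\mu(s,p)}$ ensures $\sup_{t\in[0,T]}\lm_{\mu(s,p)}(t) \leq K_T$ for a constant $K_T$ depending only on $T$ and $f_{\rm in}$. Since $(s+\gamma)/p \leq \mu(s,p)$, the right-hand side moment $\lm_{(s+\gamma)/p}(t)^p$ in \eqref{final-L2} is a fortiori uniformly bounded on $[0,T]$. Substituting the above lower bound on $\lD_{s+\gamma,p}(t)$ into \eqref{final-L2} then yields
\[\frac{\d}{\d t}\lM_{s,p}(t) \leq A - B\,\lM_{s,p}(t)^{1+\beta}\]
for constants $A,B>0$ depending only on $p,\gamma,s,T,f_{\rm in}$.

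Finally, a standard ODE comparison --- splitting $[0,t]$ according to whether $\lM_{s,p}$ lies below or above the stationary level $(A/B)^{1/(1+\beta)}$ and integrating the inequality $y' \leq -(B/2)\,y^{1+\beta}$ on the region where $y$ is large --- yields the bound $\lM_{s,p}(t) \leq \max\!\bigl((2A/B)^{1/(1+\beta)},\,(B\beta t/2)^{-1/\beta}\bigr)$, which is exactly \eqref{appearLMs} since $1/\beta = d(p-1)/2$. The main technical point I expect is the rigorous justification of the differential inequality when $\lM_{s,p}(0) = +\infty$ (as $L^1_2\cap L\log L$ does not embed into $L^p$): this is handled by a standard regularization of $f_{\rm in}$ together with the fact that the super-linear dissipation $-By^{1+\beta}$ makes the ODE argument tolerant of an infinite initial value, so that the bound passes to the limit.
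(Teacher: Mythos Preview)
Your proposal is correct and follows essentially the same route as the paper: the same weighted interpolation \eqref{int-ineq} with $r=p$, $r_{1}=1$, $r_{2}=pd/(d-2)$ and weight $a_{2}=(s+\gamma)/p$ (forcing $a_{1}=\mu(s,p)$), the same inversion giving a lower bound on $\lD_{s+\gamma,p}$ in terms of $\lM_{s,p}^{1+2/(d(p-1))}$, the same use of moment propagation on $[0,T]$, and the same ODE comparison for $y'+by^{1+\beta}\leq a$. Your remark on the case $\lM_{s,p}(0)=+\infty$ is a welcome addition; the paper handles this implicitly by invoking the comparison argument of \cite[proof of Proposition 3.12]{ABDL1}, which is insensitive to the initial value for the same super-linear reason you mention.
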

\begin{proof} Let us pick $s \geq 0$. Recall estimate \eqref{final-L2} 
\begin{equation*}
\dfrac{\d}{\d t}\lM_{s,p}(t) + {\frac{K_{0}(p-1)}{2p}}\lD_{s+\g,p}(t) \leq  \bar{C}_{p,\g,s}(f_{\mathrm{in}}) \lm_{\frac{s+\g}{p}}(t)^{p}\,,\qquad t>0\,.
\end{equation*} We resort again to the interpolation inequality \eqref{int-ineq} to estimate now $\lM_{s,p}(t)$ in terms of moments of $f$ and $\lD_{s+\g,p}(t)$. Namely, applying \eqref{int-ineq} with $r=p$, $r_{1}=1,$ $r_{2}=q=\frac{pd}{d-2}$, i.e. $\theta=\frac{2}{d(p-1)+2}$,  { $a=\frac{s}{p}$} and $a_{2}=\frac{s+\g}{p}$, we see that 
$$a_{1}=\frac{2s-\g d(p-1)}{2p}$$
and, as in \eqref{eq:ms+gMs+g}, we obtain 
$$\lM_{s,p}(t) \leq C_{\mathrm{Sob}}^{\frac{d(p-1)}{d(p-1)+2}}\lm_{a_{1}}(t)^{\frac{2p}{d(p-1)+2}}\,\lD_{s+\g,p}(t)^{\frac{d(p-1)}{d(p-1)+2}}.$$
Thus,
\begin{equation}\label{eq:compare}
\lD_{s+\g,p}(t) \geq C_{d,p}\,\lm_{\frac{2s-\g d(p-1)}{2p}}(t)^{-\frac{2p}{d(p-1)}}\,\lM_{s,p}(t)^{\frac{d(p-1)+2}{d(p-1)}}\end{equation}
which, with \eqref{final-L2}, results in the inequality valid for any $t>0$
\begin{equation}\label{eq:MspK}
\dfrac{\d}{\d t}\lM_{s,p}(t) + \bm{C}_{d,p}K_{0}\lm_{\frac{2s-\g d(p-1)}{2p}}(t)^{-\frac{2p}{d(p-1)}}\,\lM_{s,p}(t)^{\frac{d(p-1)+2}{d(p-1)}} \leq  \bar{C}_{p,\g,s}(f_{\mathrm{in}})\lm_{\frac{s+\g}{p}}(t)^{p}\,.
\end{equation}
Now, let us note that $\frac{2s-\g d(p-1)}{2p}> \frac{s+\g}{p}$. If $\frac{2s-\g d(p-1)}{2p}\le2$,  we clearly have 
$$\sup_{t\in[0,T]}\max\left(\lm_{\frac{2s-\g d(p-1)}{2p}}(t),\lm_{\frac{s+\g}{p}}(t)\right) \le\|f_{\mathrm{in}}\|_{L^1_2}.$$
Otherwise, we deduce from the assumption \eqref{eq:extra_assumption} and Theorem \ref{theo:prop_mom} that there exists $C_{s,\g}(p)$ depending on $s$, $\g$, $p$ and $f_{\mathrm{in}}$ such that
$$\sup_{t\in[0,T]}\max\left(\lm_{\frac{2s-\g d(p-1)}{2p}}(t),\lm_{\frac{s+\g}{p}}(t)\right)\le C_{s,\g}(p).$$
Therefore, there exist $\bm{a}_{s}(f_{\rm in})$ and $\bm{k}_{s}(f_{\rm in})$ depending on $\g,p,s,T$ and $f_{\mathrm{in}}$ such that 
\begin{equation}\label{eq:xtyt}
\dfrac{\d}{\d t}\lM_{s,p}(t) + \bm{a}_{s}(f_{\rm in})\lM_{s,p}(t)^{\frac{d(p-1)+2}{d(p-1)}}  \leq  \bm{k}_s(f_{\rm in})\,,\qquad t \in[0,T]\,.
\end{equation}
The conclusion then follows by a comparison argument. Namely, one shows that \eqref{appearLMs} holds true with
$$c_{s,\g,p}(f_{\mathrm{in}})= \max\left\{\left(\frac{d(p-1)}{\bm{a}_{s}(f_{\rm in})}\right)^{\frac{d(p-1)}{2}}\,,\,{2^{\frac{d(p-1)}{d(p-1)+2}}} \left(\frac{\bm{k}_{s}(f_{\rm in})}{\bm{a}_{s}(f_{\rm in})}\right)^{\frac{d(p-1)}{d(p-1)+2}} \right\}$$
by simply resuming the arguments of \cite[proof of Proposition 3.12]{ABDL1}.
\end{proof} 

  \subsection{De Giorgi's approach to pointwise bounds}\label{sec:level}

We now show how the above appearance of $L^{p}$-norms implies the appearance of $L^{\infty}$ bounds. For parabolic equations, the passage from $L^{2}$ to $L^{\infty}$ is made through the so-called De Giorgi-Moser iteration procedure \cite{degiorgi} and such an approach has been introduced in \cite{ricardo} for spatially homogenous kinetic equations. 
We introduce, as in \cite{ricardo}, for any \emph{fixed} $\l >0$,
$$f_{\l}(t,v) :=(f(t,v)-\l), \qquad f_{\l}^{+}(t,v) :=f_{\l}(t,v)\ind_{\{f\geq\l\}}.$$
To prove an $L^{\infty}$ bound for $f(t,v)$, one looks for an $L^{2}$-bound for $f_{\l}$. We start with the following estimate.  
\begin{prop}\label{lem:fl+}  Let $-2\le \g<0$ and $T>0$. Let a nonnegative initial datum $f_{\mathrm{in}}\in L^1_2(\R^d)\cap L\log L(\R^d)$ satisfying \eqref{eq:Mass} be given. Let  $f(t,\cdot)$ be a weak-solution to \eqref{LFD}.
There exist $c_{0},\kappa_{0} >0$ depending only on $\g$, $T$ and $f_{\rm in}$  such that, for any $\l >0$, any $t\in[0,T]$, 
\begin{equation}\label{eq:13Ric}\begin{split}
\frac{1}{2}\frac{\d}{\d t}\|f_{\l}^{+}(t)\|_{L^{2}}^{2} &+ c_{0}\int_{\R^{d}}\left|\nabla \left(\langle v \rangle^{\frac{\g}{2}}f_{\l}^{+}(t,v)\right)\right|^{2}\d v \\
&\leq \kappa_{0} \|\langle \cdot \rangle^{\frac{\g}{2}}f_{\l}^{+}(t)\|_{L^{2}}^{2}-\l \int_{\R^{d}}\bm{c}_{\g}[f](t,v)\,f_{\l}^{+}(t,v)\d v.\end{split}\end{equation}
\end{prop}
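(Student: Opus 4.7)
The plan is to test the divergence form of the Landau equation \eqref{LFD} against $f_{\l}^{+}$, and then combine the uniform ellipticity of $\mathcal{A}[f]$ with the $\e$-Poincar\'e inequality (Theorem \ref{cor:MAIN}) exactly as in the proof of Proposition \ref{prop:L2}. Formally, multiplying the equation $\pa_{t}f=\grad\cdot(\mathcal{A}[f]\grad f-\bm{b}[f]f)$ by $f_{\l}^{+}$ and integrating by parts (using that $\grad f=\grad f_{\l}^{+}$ on $\{f>\l\}$ and vanishes elsewhere, so that $f_{\l}^{+}\grad f=f_{\l}^{+}\grad f_{\l}^{+}$ and $f\grad f_{\l}^{+}=(f_{\l}^{+}+\l)\grad f_{\l}^{+}$) yields
$$
\tfrac{1}{2}\tfrac{\d}{\d t}\|f_{\l}^{+}\|_{L^{2}}^{2}+\int_{\R^{d}}\mathcal{A}[f]\grad f_{\l}^{+}\cdot\grad f_{\l}^{+}\,\d v=\int_{\R^{d}}\bm{b}[f]\cdot\grad f_{\l}^{+}\,\big(f_{\l}^{+}+\l\big)\,\d v.
$$
The key algebraic observation is that $\grad\cdot\bm{b}[f]=\bm{c}_{\g}[f]$. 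Hence a further integration by parts on the right-hand side produces
$$
\int_{\R^{d}}\bm{b}[f]\cdot\grad f_{\l}^{+}\,f_{\l}^{+}\,\d v=-\tfrac{1}{2}\int_{\R^{d}}\bm{c}_{\g}[f]\,(f_{\l}^{+})^{2}\,\d v,\qquad \l\int_{\R^{d}}\bm{b}[f]\cdot\grad f_{\l}^{+}\,\d v=-\l\int_{\R^{d}}\bm{c}_{\g}[f]\,f_{\l}^{+}\,\d v,
$$
which already explains the appearance of the two terms involving $\bm{c}_{\g}[f]$ on the right-hand side of \eqref{eq:13Ric}.

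Next I would control the diffusive term from below using the uniform ellipticity of $\mathcal{A}[f]$ (Proposition \ref{diffusion}), which gives $\mathcal{A}[f]\grad f_{\l}^{+}\cdot\grad f_{\l}^{+}\geq K_{0}\langle v\rangle^{\g}|\grad f_{\l}^{+}|^{2}$, and then pass to the weighted gradient via the elementary identity already used in \eqref{eq:Gradient}, namely
$$
\langle v\rangle^{\g}|\grad f_{\l}^{+}|^{2}\geq\tfrac{1}{2}\bigl|\grad\bigl(\langle v\rangle^{\g/2}f_{\l}^{+}\bigr)\bigr|^{2}-\tfrac{\g^{2}}{4}\langle v\rangle^{\g-2}(f_{\l}^{+})^{2}.
$$
The lower-order term is controlled by $\|\langle\cdot\rangle^{\g/2}f_{\l}^{+}\|_{L^{2}}^{2}$ since $\langle v\rangle^{\g-2}\le\langle v\rangle^{\g}$, which contributes to $\kappa_{0}$.

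The remaining difficulty is the quadratic term $-\tfrac{1}{2}\int\bm{c}_{\g}[f]\,(f_{\l}^{+})^{2}\d v$; this is precisely where the $\e$-Poincar\'e inequality of Theorem \ref{cor:MAIN} enters, applied with $\phi=f_{\l}^{+}$. For a sufficiently small $\e>0$, this produces
$$
-\tfrac{1}{2}\int_{\R^{d}}\bm{c}_{\g}[f]\,(f_{\l}^{+})^{2}\,\d v\leq\tfrac{\e}{2}\int_{\R^{d}}\bigl|\grad\bigl(\langle v\rangle^{\g/2}f_{\l}^{+}\bigr)\bigr|^{2}\d v+\tfrac{C}{2}\int_{\R^{d}}\langle v\rangle^{\g}(f_{\l}^{+})^{2}\d v,
$$
with $C=C(\e,\g,f_{\rm in},T)$. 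Choosing $\e$ small enough so that the gradient term is absorbed into the left-hand side leaves a positive coefficient $c_{0}>0$ in front of $\int|\grad(\langle v\rangle^{\g/2}f_{\l}^{+})|^{2}\d v$, while all remaining weighted $L^{2}$ contributions are gathered into $\kappa_{0}\|\langle\cdot\rangle^{\g/2}f_{\l}^{+}\|_{L^{2}}^{2}$. The last term $-\l\int\bm{c}_{\g}[f]f_{\l}^{+}\d v$ (which is nonnegative since $\bm{c}_{\g}[f]\leq0$) is simply kept on the right-hand side as it will later be handled by the $L^{p}$ bounds from Theorem \ref{theo:boundedL2}.

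The main (and essentially only) obstacle is justifying the integration by parts rigorously at the level of weak solutions, since $f_{\l}^{+}$ is merely Lipschitz in $f$. This is handled in the standard way by using a smooth convex approximation $\Phi_{n}(r)\to\tfrac{1}{2}(r-\l)_{+}^{2}$ (for instance $\Phi_{n}'(r)=(r-\l)_{+}\wedge n$), writing the identity for $\int\Phi_{n}(f(t,v))\d v$, and passing to the limit $n\to\infty$ using the regularity of weak solutions recalled in Appendix \ref{sec:known}; no new idea is required for this step.
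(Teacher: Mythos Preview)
Your proposal is correct and follows essentially the same route as the paper: test the divergence form against $f_{\l}^{+}$, use $f\nabla f_{\l}^{+}=\tfrac12\nabla(f_{\l}^{+})^{2}+\l\nabla f_{\l}^{+}$ together with $\nabla\cdot\bm{b}[f]=\bm{c}_{\g}[f]$, invoke the ellipticity of $\mathcal{A}[f]$ and the pointwise inequality \eqref{eq:Gradient}, and absorb $-\tfrac12\int\bm{c}_{\g}[f](f_{\l}^{+})^{2}$ via Theorem~\ref{cor:MAIN} with $\phi=f_{\l}^{+}$ (the paper takes $\e=K_{0}/2$, yielding $c_{0}=K_{0}/4$). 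The only addition on your side is the remark on the smooth convex approximation $\Phi_{n}$ to justify the computation for weak solutions, which the paper leaves implicit.
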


\begin{proof} Given $\l >0$, one has
$\partial_{t}\left(f_{\l}^{+}\right)^{2}=2f_{\l}^{+}\partial_{t}f_{\l}^{+}=2f_{\l}^{+}\partial_{t}f$
and $\nabla f_{\l}^{+}=\ind_{\{f\geq\l\}}\nabla f$, so that
\begin{multline*}
\frac{1}{2}\frac{\d}{\d t}\|f_{\l}^{+}(t)\|_{L^2}^{2}=-\int_{\R^{d}}\mathcal{A}\nabla f\cdot \nabla f_{\l}^{+} \d v
+\int_{\R^{d}}f \bm{b}[f]\cdot \nabla f_{\l}^{+}\d v\\
=-\int_{\R^{d}}\mathcal{A}\nabla f^{+}_{\l}\cdot \nabla f_{\l}^{+}\d v
+\int_{\R^{d}}f \bm{b}[f]\cdot \nabla f_{\l}^{+}\d v.\end{multline*}
Now, one easily checks that
\begin{equation*} 
f \nabla f^{+}_{\l}=\tfrac{1}{2} \nabla (f^{+}_{\l})^{2}+ \l \nabla f^{+}_{\l}.
 \end{equation*}
Therefore
\begin{equation*}
\frac{1}{2}\frac{\d}{\d t}\|f_{\l}^{+}(t)\|_{L^2}^{2}+\int_{\R^{d}}\mathcal{A}\nabla f^{+}_{\l}\cdot \nabla f^{+}_{\l}\d v
= -\tfrac{1}{2} \int_{\R^{d}}\bm{c}_{\g}[f](f^{+}_{\l})^{2}\d v - \l \int_{\R^{d}}\bm{c}_{\g}[f] f^{+}_{\l}\d v .\end{equation*}
{It then follows from Proposition \ref{diffusion} that
\begin{equation*}\begin{split}
  \int_{\R^{d}}\mathcal{A}\nabla f^{+}_{\l}\cdot \nabla f^{+}_{\l}\d v & \ge  
  K_0 \int_{\R^{d}} \langle v\rangle^\g |\nabla f^{+}_{\l}(t,v)|^2 \d v  \\
& \ge   \frac{K_0}{2} \int_{\R^{d}} \left|\nabla \left( \langle v\rangle^{\frac{\g}{2}}f^{+}_{\l}(t,v)\right)\right|^2 \d v  - K_0 \int_{\R^{d}}  \left(f^{+}_{\l}(t,v)\right)^2 \left|\nabla \left( \langle v\rangle^{\frac{\g}{2}}\right)\right|^2 \d v\\
& \ge   \frac{K_0}{2} \int_{\R^{d}} \left|\nabla \left( \langle v\rangle^{\frac{\g}{2}}f^{+}_{\l}(t,v)\right)\right|^2 \d v  - \tilde{C}K_0  \|\langle \cdot \rangle^{\frac{\g}{2}}f_{\l}^{+}(t)\|_{L^{2}}^{2}\,.
\end{split}\end{equation*}
We finally deduce from \eqref{eq:poin0}  with $\phi=f_{\l}^{+}$ and $\varepsilon =\frac{K_0}{2}$ that \eqref{eq:13Ric} holds  with $c_0:=\frac{K_0}{4}$ and $\kappa_{0}:=\frac{1}{2}C + \tilde{C} K_0$, where $C$ is defined in Theorem \ref{cor:MAIN}.}
\end{proof} 

Inspired by De Giorgi's iteration method introduced for elliptic equations, the crucial point in the level set approach of \cite{ricardo} is to compare some suitable energy functional associated to $f_{\l}^{+}$ with the same energy functional at some different level $f_{k}^{+}.$ The key observation being that, if 
$0 \leq k <\ell$, then
\begin{equation*}
0 \leq f^{+}_{\l}\leq f^{+}_{k},\quad\text{and}\quad \mathbf{1}_{\{f_{\l} \geq 0\}} \leq \frac{f^{+}_{k}}{\ell - k}\,.
\end{equation*}
This implies in particular that (see \cite{ricardo,ABDL1} for details)
\begin{equation}\label{eq:alphaf}
f_{\l}^{+} \leq \left(\ell-k\right)^{-\alpha}\,\left(f^{+}_{k}\right)^{1+\alpha} \qquad \forall \,\alpha \geq 0, \qquad 0 \leq k < \l.\end{equation}
On this basis, we need the following interpolation inequalities which are independent of the equation \eqref{LFD}.
\begin{lem}\label{lem:changelevel}  There exists $C >0$  such that, for any $0 \leq k < \l$, one has (for all nonnegative $f$ for which the terms are defined),  
\begin{equation}\label{eq:flL2}
\|\langle \cdot \rangle^{\frac{\g}{2}}f_{\l}^{+}\|_{L^{2}}^{2} \leq C\,(\l-k)^{- {\frac{4}{d}}}\left\|\nabla \left(\langle \cdot \rangle^{\frac{\g}{2}}f_{k}^{+}\right)\right\|_{L^{2}}^{2}\,\left\|f^{+}_{k}\right\|_{L^{2}}^{ {\frac{4}{d}}}.\end{equation}
For $p \in \left[1, {\frac{d}{d-2}}\right)$, there is $C_{p} >0$ such that, for any $0 \leq k < \l$ (and all nonnegative $f$ for which the terms are defined), 
\begin{equation}\label{eq:flLp}
\|\langle \cdot \rangle^{\g}f_{\l}^{+}\|_{L^{p}} \leq C_{p}(\l-k)^{-( {\frac{2}{p}-\frac{d-4}{d}})}\,\left\|\nabla\left(\langle \cdot \rangle^{\frac{\g}{2}}f_{k}^{+}\right)\right\|_{L^{2}}^{2}\,\|f_{k}^{+}\|_{L^{2}}^{ {\frac{2}{p}+\frac{4-2d}{d}}}.\end{equation}
Moreover, for any  $q \in  \left( {\frac{2d+2}{d}} , {\frac{2d+4}{d}}\right)$, there is $c_{q} >0$ such that, for any $0 \leq k < \l$ (and all nonnegative $f$ for which the terms are defined), 
\begin{equation}\label{eq:flLq}
\|f_{\l}^{+}\|_{L^{2}}^{2} \leq \frac{c_{q}}{(\l-k)^{q-2}}\,\left\|\langle \cdot \rangle^{s}f_{k}^{+}\right\|_{L^{1}}^{ {\frac{2d+4}{d}}-q}\,\|f^{+}_{k}\|_{L^{2}}^{2(q- {\frac{2d+2}{d}})}\,\left\|\nabla \left(\langle \cdot \rangle^{\frac{\g}{2}}\,f^{+}_{k}\right)\right\|_{L^{2}}^{2},
\end{equation}
with $s=- {\frac{\g d}{2d+4-q d}} > - {\frac{d}{2}\g}$. Finally, for $s>2$, there exists $c_{s}>0$ such that, for any $0 \leq k < \l$ (and all nonnegative $f$ for which the terms are defined),
\begin{equation}\label{eq:flLd/(d-1)}
\|f_{\l}^{+}\|_{L^{\frac{d}{d-1}}}^{2} \leq c_{s} (\l-k)^{-2\frac{s-1}{s}}\,\left\|\langle \cdot \rangle^{s}f_{k}^{+}\right\|_{L^{1}}^{\frac{2}{s}}\,\|f^{+}_{k}\|_{L^{2}}^{\frac{2(s-2)}{s}}\,\left\|\nabla \left(\langle \cdot \rangle^{-1}\,f^{+}_{k}\right)\right\|_{L^{2}}^{2}.
\end{equation}
\end{lem}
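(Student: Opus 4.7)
All four inequalities are purely functional—no property of \eqref{LFD} is used—and I would prove each of them by the same two-ingredient recipe. First apply the pointwise truncation comparison \eqref{eq:alphaf}, namely $f_\l^+\le(\l-k)^{-\alpha}(f_k^+)^{1+\alpha}$, choosing $\alpha\ge0$ so that the resulting power of $(\l-k)^{-1}$ matches the one stated; this eliminates $f_\l^+$ in favor of a power of $f_k^+$. Second, H\"older-interpolate the resulting integral of a power of $f_k^+$ among three anchor norms: $\|\langle v\rangle^s f_k^+\|_{L^1}$, $\|f_k^+\|_{L^2}$, and $\|\langle v\rangle^{\frac{\g}{2}}f_k^+\|_{L^{\frac{2d}{d-2}}}$. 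Finally invoke the Sobolev inequality $\|g\|_{L^{\frac{2d}{d-2}}}\le C_d\|\nabla g\|_{L^2}$ to convert the last factor into $\|\nabla(\langle v\rangle^{\frac{\g}{2}}f_k^+)\|_{L^2}$.

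The concrete choices are $\alpha=\tfrac{2}{d}$ for \eqref{eq:flL2}, $\alpha=q-2$ for \eqref{eq:flLq}, and $\alpha=\tfrac{s-1}{s}$ for \eqref{eq:flLd/(d-1)}; \eqref{eq:flLp} is analogous. In the three-anchor cases I would split the integrand as
\[
(f_k^+)^r \,=\, \bigl(\langle v\rangle^s f_k^+\bigr)^a\,\bigl(f_k^+\bigr)^b\,\bigl(\langle v\rangle^{\frac{\g}{2}}f_k^+\bigr)^c\,\langle v\rangle^{-sa-\frac{\g c}{2}},
\]
and impose three constraints on $(a,b,c)$: the residual weight $-sa-\tfrac{\g c}{2}$ vanishes, the H\"older exponents $1/a,\,2/b,\,(d-2)/(cd)$ sum to $1$, and the total homogeneity $a+b+c$ equals the prescribed $r=(1+\alpha)\cdot(\text{outer exponent})$. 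For \eqref{eq:flLq} this $3\times 3$ linear system pins $(a,b,c)$ and simultaneously forces $s=-\g d/(2d+4-qd)$, with $b>0$ equivalent to $q>\tfrac{2d+2}{d}$ and the upper bound $q<\tfrac{2d+4}{d}$ ensuring $s>-\tfrac{\g d}{2}$ as claimed. For \eqref{eq:flLd/(d-1)} the analogous calculation yields $a=\tfrac{d}{s(d-1)}$, $c=\tfrac{d}{d-1}$, $b=\tfrac{(s-2)d}{s(d-1)}$, all positive precisely when $s>2$; raising the resulting H\"older estimate to the power $\tfrac{2(d-1)}{d}$ converts the three factor exponents into the targets $\tfrac{2}{s}$, $\tfrac{2(s-2)}{s}$, and $2$. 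The two-anchor cases \eqref{eq:flL2} and \eqref{eq:flLp} are the same computation with $a=0$: for \eqref{eq:flL2} one writes $\langle v\rangle^\g(f_k^+)^{2+\frac{4}{d}}=(\langle v\rangle^{\frac{\g}{2}}f_k^+)^2\,(f_k^+)^{\frac{4}{d}}$ and applies H\"older with conjugate exponents $\tfrac{d}{d-2}$ and $\tfrac{d}{2}$ before Sobolev, and \eqref{eq:flLp} is handled identically after fixing the matching $\alpha$.

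The only genuine obstacle is algebraic bookkeeping: solving the weight-balance, conjugate-exponent, and total-homogeneity constraints simultaneously with nonnegative $(a,b,c)$, and verifying that the resulting powers of $(\l-k)$, $\|\langle v\rangle^s f_k^+\|_{L^1}$, $\|f_k^+\|_{L^2}$, and $\|\nabla(\langle v\rangle^{\frac{\g}{2}}f_k^+)\|_{L^2}$ match those in the statement. Once these identifications are carried out, H\"older and Sobolev assemble the four estimates mechanically, and the constants depend only on $d$, $\g$, and the integrability parameters $p,q,s$.
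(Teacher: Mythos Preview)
Your approach is essentially the same as the paper's: truncation via \eqref{eq:alphaf} followed by weighted H\"older interpolation between the anchors $L^{1}_{s}$, $L^{2}$ and $L^{\frac{2d}{d-2}}_{\g/2}$, closed by Sobolev. One small bookkeeping slip: for \eqref{eq:flLq} the correct truncation exponent is $\alpha=\tfrac{q-2}{2}$ (so that $2(1+\alpha)=q$), not $\alpha=q-2$; with that correction the rest of your outline matches the paper's argument exactly.
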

 \begin{proof} The proof of this result follows the paths of the analogous one \cite[Lemma 4.2]{ABDL1}. However, since the latter was restricted to the case $d=3$ and $-2 < \g <0$, we give a full proof here to agree on the various exponents. The basic argument is the interpolation inequality \eqref{int-ineq}.
Moreover, for the special case $r_{1}= {\frac{2d}{d-2}}$, $r_{2}=2$, $a_{1}=\frac{\g}{2}$ and $r \in (2,r_{1})$, thanks to Sobolev embedding, the identity will become
\begin{equation}\begin{split}\label{eq:interpol}
\|\langle \cdot \rangle^{a}g\|_{L^{r}} &\leq C_{\theta}\,\left\|\nabla\left(\langle \cdot \rangle^{\frac{\g}{2}}g\right)\right\|_{L^{2}}^{\theta}\,\|\langle \cdot \rangle^{a_{2}}g\|_{L^{2}}^{1-\theta},\\
\frac{1}{r}&= {\frac{d-2\theta}{2d}}, \qquad a=\theta\,\frac{\g}{2}+(1-\theta)a_{2}, \qquad \theta \in (0,1),\qquad r \in (2,r_{1}).\end{split}\end{equation}
With these tools at hands, one has for $0 \leq k < \l$ and $r >2$, writing $r=2+2\alpha$ with \eqref{eq:alphaf},
\begin{multline*}
\|\langle \cdot \rangle^{\frac{\g}{2}}f_{\l}^{+}\|_{L^{2}}^{2}=\int_{\R^{d}}\langle v\rangle^{\g}(f^{+}_{\l}(t,v))^{2}\d v
\\\leq (\l-k)^{-2\alpha}\int_{\R^{d}}\langle v\rangle^{\g}(f^{+}_{k}(t,v))^{2+2\alpha}\d v
=(\l-k)^{-(r-2)}\left\|\langle \cdot \rangle^{\frac{\g}{r}}f^{+}_{k}(t)\right\|_{L^{r}}^{r}\,,
\end{multline*}
so that \eqref{eq:interpol} gives, with $a=\frac{\g}{r}$, 
$$\|\langle \cdot \rangle^{\frac{\g}{2}}f_{\l}^{+}\|_{L^{2}}^{2} \leq C(\l-k)^{-(r-2)}\left\|\nabla \left(\langle \cdot \rangle^{\frac{\g}{2}}f_{k}^{+}(t)\right)\right\|_{L^{2}}^{r\theta}\,\left\|\langle \cdot \rangle^{a_{2}} f^{+}_{k}\right\|_{L^{2}}^{r(1-\theta)}, $$
with $\theta= {\frac{d(r-2)}{2r}}$ and $a_{2}= {\frac{\g}{2}\frac{4+2d-dr}{2d+2r-dr}}$. One picks then $r={\frac{4+2d}{d}}$ so that $a_{2}=0$ and $r\theta=2$, to obtain \eqref{eq:flL2}. One proceeds in the same way to estimate $\|\langle \cdot\rangle^{\g}f_{\l}^{+}\|_{L^{p}}^{p}$. Namely, for $r >p$,
$$
\|\langle \cdot \rangle^{\g}f_{\l}^{+}\|_{L^{p}}^{p} \leq (\l-k)^{-(r-p)}\left\|\langle \cdot\rangle^{\frac{\g\,p}{r}}f^{+}_{k}\right\|_{L^{r}}^{r}
$$
and, with $r >2p$, imposing in \eqref{eq:interpol}  $a_{2}=0$ and $a=\frac{\g\,p}{r}$, we get $\theta=\frac{2p}{r}$ and 
$$\|\langle \cdot \rangle^{\g}f_{\l}^{+}\|_{L^{p}}^{p} \leq C(\l-k)^{-(r-p)}\left\|\nabla \left(\langle \cdot \rangle^{\frac{\g}{2}}f_{k}^{+}\right)\right\|_{L^{2}}^{2p}\,\left\|f_{k}^{+}\right\|_{L^{2}}^{r-2p}, $$ 
which gives \eqref{eq:flLp} when $r= {2+\frac{4p}{d}}$ (notice that $r >2p$ since $p < {\frac{d}{d-2}}$).\\

\noindent
Let us now prove \eqref{eq:flLq}. Let us consider first $q >2$ and use \eqref{int-ineq}. One has
\begin{equation}\label{interp}
  \|g\|_{L^{q}}\leq \|\langle \cdot \rangle^{s}\,g\|_{L^{1}}^{\theta_{1}}\,\|g\|_{L^{2}}^{\theta_{2}}\,\|\langle \cdot \rangle^{\frac{\g}{2}}g\|_{L^{ {\frac{2d}{d-2}}}}^{\theta_{3}},
  \end{equation}
with $\theta_{i} \geq 0$ $(i=1,2,3)$ such that
$$\theta_{1}+\theta_{2}+\theta_{3}=1, \qquad s\,\theta_{1}+0\cdot \theta_{2}+\frac{\g}{2}\theta_{3}=0, \qquad \frac{\theta_{1}}{1}+\frac{\theta_{2}}{2}+\frac{\theta_{3}(d-2)}{2d}=\frac{1}{q}.$$
Imposing $q\theta_{3}=2$, this easily yields
$$q\theta_{1}= {\frac{2d+4}{d}-q}, \qquad q\theta_{2}=2\left(q- {\frac{2d+2}{d}}\right), \qquad s=- {\frac{\g d}{2d+4-q d}}, \qquad q \in \left( {\frac{2d+2}{d}}, {\frac{2d+4}{d}}\right).$$
Using Sobolev inequality, this gives, for any $q \in \left( {\frac{2d+2}{d}}, {\frac{2d+4}{d}}\right)$,
 the existence of a positive constant $C >0$ such that
$$\|g\|_{L^{q}}^{q} \leq C\,\|\langle \cdot \rangle^{s}g\|_{L^{1}}^{ {\frac{2d+4}{d}-q}}\,\|g\|_{L^{2}}^{2\left(q- {\frac{2d+2}{d}}\right)}\,\left\|\nabla \left(\langle \cdot \rangle^{\frac{\g}{2}}\,g\right)\right\|_{L^{2}}^{2}, \qquad s=-{\frac{\g d}{2d+4-q d}}.$$
Using then \eqref{eq:alphaf}, for any $q >2$, one has $\|f_{\l}^{+}\|_{L^{2}}^{2} \leq (\l-k)^{2-q}\,\|f_{k}^{+}\|_{L^{q}}^{q}$ for $0 \leq k < \l$, and the above inequality gives the result.\\

It remains to prove \eqref{eq:flLd/(d-1)}. We deduce from \eqref{eq:alphaf} with $\alpha\ge0$ that, 
$$\|f_{\l}^{+}\|_{L^{\frac{d}{d-1}}}^2 \le (\l-k)^{-2\alpha} \|f_{k}^{+}\|_{L^{\frac{d(1+\alpha)}{d-1}}}^{2(1+\alpha)}.$$
Then, imposing $\alpha\in\left(0,\frac12\right)$, one may apply \eqref{interp} with $\gamma=-2$, $q=\frac{d(1+\alpha)}{d-1}$, $\theta_1=\frac{1-\alpha}{1+\alpha}$, $\theta_2=\frac{2\alpha-1}{1+\alpha}$, $\theta_3=\frac{1}{1+\alpha}$, $s=\frac{1}{1-\alpha}$ and obtain
$$\|f_{\l}^{+}\|_{L^{\frac{d}{d-1}}}^2 \le(\l-k)^{-2\alpha} \,\left\|\langle \cdot \rangle^{s}f_{k}^{+}\right\|_{L^{1}}^{2(1-\alpha)}\,\|f^{+}_{k}\|_{L^{2}}^{2(2\alpha-1)}\,\left\|\langle \cdot \rangle^{-1}\,f^{+}_{k}\right\|_{L^{\frac{2d}{d-2}}}^{2}.$$
Since $\alpha=1-\frac{1}{s}$,  \eqref{eq:flLd/(d-1)} follows directly from the above inequality and the Poincar\'e inequality.
\end{proof}

Let us now introduce, for any measurable $f:=f(t,v) \ge 0$ and  $\ell \geq 0$, the energy functional
$$\mathscr{E}_{\ell}(T_{1},T_{2})=\sup_{t \in [T_{1},T_{2})}\left(\frac{1}{2} \left\|f_{\l}^{+}(t)\right\|_{L^{2}}^{2} + c_{0}\int_{T_{1}}^{t}\left\|\nabla \left(\langle \cdot \rangle^{\frac{\g}{2}}\,f^{+}_{\ell}(\tau)\right)\right\|_{L^{2}}^{2}\d \tau\right), \qquad 0 \leq T_{1} \leq T_{2}$$
where $c_{0}$ is defined here above. From now on, we distinguish between the two cases $-2 < \g <0$ and $\g=-2$ since they lead to different kinds of estimates for the integral involving $\bm{c}_{\g}$ in \eqref{eq:13Ric}.

\subsubsection{Case $-2 < \g <0$}

We then have the fundamental result for the implementation of the level set analysis.
\begin{prop}\label{main-energy-functional} Let $-2<\g<0$ and $T>0$. Let a nonnegative initial datum $f_{\mathrm{in}} \in L^1_2(\R^d)\cap L\log L(\R^d)$ satisfying \eqref{eq:Mass} be given. Let  $f(t,\cdot)$ be a weak-solution to \eqref{LFD}. Then, for any 
$$p_{\g} \in \left(\frac{d}{d+\g},3\right), \qquad  s > {\frac{d}{2}}|\g|,$$ there exist some positive constants $ C_{1},C_{2}$ depending only on $s$, $\|f_{\rm in}\|_{L^1_2}$ and $H(f_{\rm in})$ such that, for any times $0 \leq T_{1} < T_{2} \leq T$ and $0 \leq k < \ell$, 
\begin{multline}\label{eq:ElT2T3}
\mathscr{E}_{\l}(T_{2},T) \leq \frac{{C}_{2}}{T_{2}-T_{1}}(\l-k)^{{-\frac{4s+d\g}{ds}}}\left[\sup_{\tau \in [T_{1},T]}\lm_{s}(f(\tau))\right]^{{\frac{|\g|}{s}}}\,\mathscr{E}_{k}(T_{1},T)^{{\frac{(d+2)s+d\g}{ds}}}
\\
+{C}_{1}\mathscr{E}_{k}(T_{1},T)^{\frac{1}{p_{\g}}+\frac{2}{d}}\left(\l-k\right)^{-\left(\frac{2}{p_{\g}}-\frac{d-4}{d}\right)} {\sup_{\tau \in [T_{1},T]}\lm_{|\g|}(f(\tau))}
\\
\times \left(\l+\left[{(\l-k)^{\frac{2}{p_{\g}}-1}+\l(\l-k)^{\frac{2}{p_{\g}}-2}}\right]\mathscr{E}_{k}(T_{1},T)^{1-\frac{1}{p_{\g}}}\right)\,. 
\end{multline} \end{prop}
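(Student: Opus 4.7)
The plan is to run the standard De Giorgi-type energy argument based on \eqref{eq:13Ric}: integrate in time, take a supremum on the right end-point, and average on the left end-point to create the $1/(T_2-T_1)$ factor, then substitute the level-change interpolations of Lemma \ref{lem:changelevel}. Concretely, integrating \eqref{eq:13Ric} from $t_0\in[T_1,T_2]$ up to $t\in[T_2,T)$, taking $\sup_{t\in[T_2,T)}$, and averaging over $t_0\in[T_1,T_2]$ yields
\begin{equation*}
\mathscr{E}_{\l}(T_2,T)\le \frac{1}{2(T_2-T_1)}\int_{T_1}^{T_2}\|f_\l^+(t_0)\|_{L^2}^2\,\d t_0 +\kappa_0\int_{T_1}^T\|\langle\cdot\rangle^{\g/2}f_\l^+\|_{L^2}^2\,\d\tau -\l\int_{T_1}^T\!\!\int_{\R^d}\bm{c}_{\g}[f]\,f_\l^+\,\d v\,\d\tau,
\end{equation*}
reducing the proof to bounding these three positive quantities by powers of $(\l-k)$, $\mathscr{E}_k(T_1,T)$, and moments of $f$.

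For the time-averaged $L^2$ term, I would invoke \eqref{eq:flLq} with the tailored choice $q=\frac{2d+4}{d}+\frac{\g}{s}$, the unique value for which the moment exponent in \eqref{eq:flLq} is precisely $s$; the hypothesis $s>\frac{d}{2}|\g|$ is exactly what keeps this $q$ in the admissible range $\left(\frac{2d+2}{d},\frac{2d+4}{d}\right)$. Combining with $\|\langle\cdot\rangle^s f_k^+\|_{L^1}\le\lm_s(f)$, $\|f_k^+\|_{L^2}^2\le 2\mathscr{E}_k$, and $\int_{T_1}^T\|\nabla(\langle\cdot\rangle^{\g/2}f_k^+)\|_{L^2}^2\d\tau\le c_0^{-1}\mathscr{E}_k$, one reads off the first summand of \eqref{eq:ElT2T3}, with the exponents $(\l-k)^{-(4s+d\g)/(ds)}$, $\mathscr{E}_k^{((d+2)s+d\g)/(ds)}$ and $\lm_s^{|\g|/s}$.

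The second summand of \eqref{eq:ElT2T3} assembles three contributions. First, the $\kappa_0$ time integral, controlled by \eqref{eq:flL2}, yields after time integration the quantity $(\l-k)^{-4/d}\mathscr{E}_k^{(d+2)/d}$, which matches the middle piece $(\l-k)^{2/p_\g-1}\mathscr{E}_k^{1-1/p_\g}$ of the bracket once multiplied by the common prefactor $(\l-k)^{-(2/p_\g-(d-4)/d)}\mathscr{E}_k^{1/p_\g+2/d}$. For the remaining $\l$-weighted term I would rewrite $-\bm{c}_{\g}[f]=(d-1)(d+\g)|\cdot|^\g\ast f$ and split $|v-v_*|^\g$ on $\{|v-v_*|\gtrless\langle v\rangle/2\}$, exactly as in the proof of Proposition \ref{prop:GG}: the far piece is bounded by $\|f\|_{L^1}\|\langle\cdot\rangle^\g f_\l^+\|_{L^1}$ and handled by \eqref{eq:flLp} at $p=1$, producing the $\l(\l-k)^{2/p_\g-2}\mathscr{E}_k^{1-1/p_\g}$ contribution; the near piece is estimated by HLS (or Lorentz-space duality) by $\lm_{|\g|}(f)\|\langle\cdot\rangle^\g f_\l^+\|_{L^{p_\g}}$ and treated via \eqref{eq:flLp} at $p=p_\g$, producing the isolated $\l$ in the bracket. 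The uniform factor $\sup\lm_{|\g|}(f)$ is obtained by upgrading the constant coefficients in the other contributions through the mass-conservation inequality $\lm_{|\g|}(f)\ge\lm_0(f)=1$.

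The main obstacle is the bookkeeping in this last step: three distinct estimates (the $\kappa_0$ integral and the near and far HLS pieces each multiplied by $\l$), with applications of \eqref{eq:flLp} at two different values of $p$, must combine so that their $(\l-k)$, $\mathscr{E}_k$ and $\l$ exponents align to form exactly the trinomial $\l+[(\l-k)^{2/p_\g-1}+\l(\l-k)^{2/p_\g-2}]\mathscr{E}_k^{1-1/p_\g}$ of \eqref{eq:ElT2T3}. The key algebraic tool is \eqref{eq:alphaf}, used at different powers $\alpha$ inside each subterm to trade factors of $\l$ for additional factors of $(\l-k)^{-1}$ and $f_k^+$ before interpolation and Sobolev embedding are applied.
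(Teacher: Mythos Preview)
Your proposal is correct and follows essentially the same route as the paper. The only difference is that where you split $|v-v_*|^{\g}$ on $\{|v-v_*|\gtrless\langle v\rangle/2\}$ and invoke HLS/Lorentz duality to bound $-\int\bm{c}_{\g}[f]f_\l^+$ by $C\,\lm_{|\g|}(f)\big(\|\langle\cdot\rangle^{\g}f_\l^+\|_{L^1}+\|\langle\cdot\rangle^{\g}f_\l^+\|_{L^{p_\g}}\big)$, the paper simply cites \cite[Proposition~2.4]{ABDL1} for that same inequality; the subsequent use of Lemma~\ref{lem:changelevel} (namely \eqref{eq:flL2}, \eqref{eq:flLp} at $p=1$ and $p=p_\g$, and \eqref{eq:flLq} with $q=\tfrac{2d+4}{d}+\tfrac{\g}{s}$) and the algebraic assembly, including the upgrade of the $\kappa_0$ term via $\lm_{|\g|}\ge\lm_0=1$, are identical.
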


\begin{proof}
Let us fix $0 \leq T_{1} < T_{2} \leq T$. Integrating inequality \eqref{eq:13Ric} over $(t_{1},t_{2})$, we obtain that
\begin{multline*}
\frac{1}{2}\|f_{\l}^{+}(t_{2})\|_{L^{2}}^{2} + c_{0}\int_{t_{1}}^{t_{2}} \big\| \nabla \big(\langle \cdot \rangle^{\frac{\g}{2}}f_{\l}^{+}(\tau)\big) \big\|^{2}_{L^{2}} \d\tau \leq \frac{1}{2}\|f_{\l}^{+}(t_{1})\|_{L^{2}}^{2} \\
+\kappa_{0} \int_{t_{1}}^{t_{2}}\|\langle \cdot \rangle^{\frac{\g}{2}}f_{\l}^{+}(\tau)\|_{L^{2}}^{2} \d \tau - \l \int_{t_{1}}^{t_{2}}\d\tau\int_{\R^{d}}\bm{c}_{\g}[f](\tau,v)\,f_{\l}^{+}(\tau,v)\d v.
\end{multline*}
Thus, if $T_{1}\leq t_{1} \leq T_{2} \leq t_{2}\leq T$, one first notices that the above inequality implies that
\begin{multline*}
\frac{1}{2}\|f_{\l}^{+}(t_{2})\|_{L^{2}}^{2} + c_{0}\int_{T_{2}}^{t_{2}} \big\| \nabla \big(\langle \cdot \rangle^{\frac{\g}{2}}f_{\l}^{+}(\tau)\big) \big\|_{L^{2}}^{2} \d\tau \leq \frac{1}{2}\|f_{\l}^{+}(t_{1})\|_{L^{2}}^{2} \\
+\kappa_{0} \int_{T_{1}}^{t_{2}}\|\langle \cdot \rangle^{\frac{\g}{2}}f_{\l}^{+}(\tau)\|_{L^{2}}^{2}  \d \tau - \l \int_{T_{1}}^{t_{2}}\d\tau\int_{\R^{d}}\bm{c}_{\g}[f](\tau,v)\,f_{\l}^{+}(\tau,v)\d v,
\end{multline*}
and, taking the supremum over $t_{2} \in [T_{2},T]$, one gets
\begin{multline*}
\mathscr{E}_{\l}(T_{2},T) \leq \frac{1}{2}\|f_{\l}^{+}(t_{1})\|_{L^{2}}^{2}+\kappa_{0} \int_{T_{1}}^{T}\|\langle \cdot \rangle^{\frac{\g}{2}}f_{\l}^{+}(\tau)\|_{L^{2}}^{2} \d \tau\\
 - \l \int_{T_{1}}^{T}\d\tau\int_{\R^{d}}\bm{c}_{\g}[f](\tau,v)\,f_{\l}^{+}(\tau,v)\d v, \qquad \forall t_{1} \in [T_{1},T_{2}].\end{multline*}
Integrating now this inequality with respect to $t_{1} \in [T_{1},T_{2}]$, one obtains
\begin{multline*}
\mathscr{E}_{\l}(T_{2},T) \leq \frac{1}{2(T_{2}-T_{1})}\int_{T_{1}}^{T_{2}}\|f_{\l}^{+}(t_{1})\|_{L^{2}}^{2}\d t_{1}+ \kappa_{0}\int_{T_{1}}^{T}\|\langle \cdot \rangle^{\frac{\g}{2}}f_{\l}^{+}(\tau)\|_{L^{2}}^{2}  \d \tau\\
 - \l \int_{T_{1}}^{T}\d\tau\int_{\R^{d}}\bm{c}_{\g}[f](\tau,v)\,f_{\l}^{+}(\tau,v)\d v.\end{multline*}
Therefore, applying \cite[Proposition 2.4]{ABDL1} with $\lambda=\g <0$, $g=f$ and $\varphi = f^{+}_{\l}$, we see that
\begin{multline}\label{ef-f1}
\mathscr{E}_{\l}(T_{2},T) \leq \frac{1}{2(T_{2}-T_{1})}\int_{T_{1}}^{T}\|f_{\l}^{+}(\tau)\|_{L^{2}}^{2}\d \tau+\kappa_{0} \int_{T_{1}}^{T}\|\langle \cdot \rangle^{\frac{\g}{2}}f_{\l}^{+}(\tau)\|_{L^{2}}^{2} \d \tau\\
+ \ell \, C_{\g,p_{\g}}  {\sup_{\tau \in [T_{1},T]}\lm_{|\g|}(f(\tau))} \left(
\,\int_{T_{1}}^{T} \|\langle \cdot \rangle^{\g}f_{\l}^{+}(\tau)\|_{L^{1}} \d\tau + \,\int_{T_{1}}^{T} \|\langle \cdot \rangle^{\g}f_{\l}^{+}(\tau)\|_{L^{p_{\g}}} \d\tau \right),
\end{multline}
for $p_{\g} >1$ such that $-\g\,q_{\g} <  {d}$, where $\frac{1}{p_{\g}}+\frac{1}{q_{\g}}=1.$  We resort now to Lemma \ref{lem:changelevel} to estimate the last three terms on the right-hand side of \eqref{ef-f1}. Applying \eqref{eq:flL2}, one first has
\begin{equation*}
\begin{split}
\int_{T_{1}}^{T}\|\langle \cdot \rangle^{\frac{\g}{2}}f_{\l}^{+}(\tau)\|_{L^{2}}^{2}\ \d \tau &\leq C\,(\l-k)^{-\frac{4}{d}}\int_{T_{1}}^{T}\left\|\nabla \left(\langle \cdot \rangle^{\frac{\g}{2}}f_{k}^{+}(\tau)\right)\right\|_{L^{2}}^{2}\,\left\|f^{+}_{k}(\tau)\right\|_{L^{2}}^{ {\frac{4}{d}}} d\tau \\ 
&\leq \frac{C}{(\ell - k)^{{\frac{4}{d}}}}\sup_{t\in[T_{1},T]}\| f^{+}_{k}(t) \|^{ {\frac{4}{d}}}_{L^{2}}\int_{T_{1}}^{T}\left\|\nabla\left(\langle \cdot \rangle^{\frac{\g}{2}}f^{+}_{k}(\tau)\right)\right\|^{2}_{L^{2}} \d \tau .\\
\end{split}
\end{equation*}
Since 
$$\sup_{t\in [T_{1},T]}\|f_{k}^{+}(t)\|_{L^{2}}^{{\frac{4}{d}}} \leq \left(2\mathscr{E}_{k}(T_{1},T)\right)^{ {\frac{2}{d}}} \quad \text{ and } \quad \int_{T_{1}}^{T}\left\|\nabla\left(\langle \cdot \rangle^{\frac{\g}{2}}f^{+}_{k}(\tau)\right)\right\|^{2}_{L^{2}}\d \tau \leq c_{0}^{-1}\mathscr{E}_{k}(T_{1},T),$$ by definition of the energy functional, we get
\begin{equation}\label{eq:intl2}
\int_{T_{1}}^{T}\|\langle \cdot \rangle^{\frac{\g}{2}}f_{\l}^{+}(\tau)\|_{L^{2}}^{2} \d \tau \leq \bar{C}_{0}\,(\l-k)^{- {\frac{4}{d}}}\mathscr{E}_{k}(T_{1},T)^{ {\frac{d+2}{d}}}, 
\end{equation}
for some positive constant $\bar{C}_{0}$ depending only on $\|f_{\mathrm{in}}\|_{L^{1}_{2}}$ and $H(f_{\rm in})$. 
Similarly, using \eqref{eq:flLp} first with $p=1$ and then with $p=p_{\g}>1$, one deduces that
\begin{equation}\label{eq:intl1}\begin{split}
C_{\g,p_{\g}}(f_{\mathrm{in}})\,\int_{T_{1}}^{T} \|\langle \cdot \rangle^{\g}f_{\l}^{+}(\tau)\|_{L^{1}} \d\tau  &\leq \bar{C}_{0}(\ell -k)^{- {\frac{d+4}{d}}}\mathscr{E}_{k}(T_{1},T)^{ {\frac{d+2}{d}}}\,,\\
C_{\g,p_{\g}}(f_{\mathrm{in}})\,\int_{T_{1}}^{T} \|\langle \cdot \rangle^{\g}f_{\l}^{+}(\tau)\|_{L^{p_{\g}}} \d\tau  &\leq \bar{C}_{0}(\ell -k)^{-(\frac{2}{p_{\g}}- {\frac{d-4}{d}})}\mathscr{E}_{k}(T_{1},T)^{ {\frac{1}{p_{\g}}+\frac{2}{d}}}\,.
\end{split}
\end{equation} 
Regarding the first term in the right-hand side of  \eqref{ef-f1}, one uses \eqref{eq:flLq} with $q=\frac{2d+4}{d}+\frac{\g}{s}$ observing that $q \in \left(\frac{2d+2}{d},\frac{2d+4}{d}\right)$,  to get
\begin{multline*}
\int_{T_{1}}^{T}\|f^{+}_{\l}(\tau)\|_{L^{2}}^{2}\d \tau \leq \frac{c_{q}}{(\l-k)^{q-2}}\sup_{\tau\in [T_{1},T]}\|\langle \cdot \rangle^{s}\,f_{k}^{+}(\tau)\|_{L^{1}}^{ {\frac{2d+4}{d}}-q}\,\times\\
\times \int_{T_{1}}^{T}\|f_{k}^{+}(\tau)\|_{L^{2}}^{2(q- {\frac{2d+2}{d}})}\left\|\nabla \left(\langle \cdot \rangle^{\frac{\g}{2}}\,f_{k}^{+}(\tau)\right)\right\|_{L^{2}}^{2}\d \tau\\
\leq \frac{\bm{c}_{q}}{(\l-k)^{q-2}}\sup_{\tau\in [T_{1},T]}\|\langle \cdot \rangle^{s}\, f_{k}^{+}(\tau)\|_{L^{1}}^{ {\frac{2d+4}{d}}-q}\mathscr{E}_{k}(T_{1},T)^{q- {\frac{d+2}{d}}},
\end{multline*}
for some positive constant $\bm{c}_{q} >0.$ Thus
\begin{equation}\label{eq:f+lL2}
\int_{T_{1}}^{T}\|f^{+}_{\l}(\tau)\|_{L^{2}}^{2}\d \tau \leq \frac{\bm{c}_{q}}{(\l-k)^{q-2}}\left(\sup_{\tau \in [T_{1},T]}\lm_{s}(f(\tau))\right)^{ {\frac{2d+4}{d}}-q}\,\mathscr{E}_{k}(T_{1},T)^{q- {\frac{d+2}{d}}}.
\end{equation}
Gathering \eqref{ef-f1}--\eqref{eq:intl1}--\eqref{eq:intl2}--\eqref{eq:f+lL2} gives the result {recalling that $q= {\frac{2d+4}{d}}+\frac{\g}{s}$.}
\end{proof}
One deduces from this the following theorem.

\begin{theo} \label{Linfinito*} Under the assumptions of Proposition \ref{main-energy-functional}, let  $f(t,\cdot)$ be a weak-solution to \eqref{LFD}. Let us assume that $f_{\rm in}\in L^1_{s}(\R^d)$ for some $s >\frac{d}{2}|\g|$. Then, for any $T > t_{*} >0$,
\begin{equation}\label{eq:Linf}
\sup_{t \in [t_{*},T)}\left\|f(t)\right\|_{L^{\infty}} \leq C \,\mathscr{E}_{0}\left(\frac{t_{*}}{2},T\right)^{\alpha_1}\,  \left(\sup_{t \in [0,T]}\lm_{s}(f(t))\right)^{\alpha_2}\, \max\left(1,t_{*}^{-\frac{ds}{4s+d\g}}\right),
\end{equation}
for some explicit $C>0$, $\alpha_1$, $\alpha_2$.
\end{theo}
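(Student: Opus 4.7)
The plan is to carry out a De~Giorgi level-set iteration based on the energy recurrence \eqref{eq:ElT2T3} of Proposition \ref{main-energy-functional}. For a threshold $K>0$ to be chosen at the end, I introduce the two monotone sequences
\[
\ell_n := K\bigl(1 - 2^{-n}\bigr), \qquad T_n := t_* - \frac{t_*}{2^{n+1}}, \qquad n\in\N,
\]
so that $\ell_0=0\nearrow K$ and $T_0 = t_*/2\nearrow t_*$, and set $U_n := \mathscr{E}_{\ell_n}(T_n, T)$. Since $\tfrac{1}{2}\|f_K^+(t)\|_{L^{2}}^{2}\leq \mathscr{E}_K(t_*,T)\le \liminf_n U_n$, showing $U_n \to 0$ forces $f(t,v)\le K$ a.e.\ on $[t_*,T)\times\R^{d}$, whence $\sup_{t\in[t_*,T)}\|f(t)\|_{L^\infty}\le K$.

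The first step is to apply \eqref{eq:ElT2T3} with $\ell=\ell_{n+1}$, $k=\ell_n$, $T_2=T_{n+1}$, $T_1=T_n$. Using $\ell_{n+1}-\ell_n = K\,2^{-n-1}$, $T_{n+1}-T_n=t_*\,2^{-n-2}$, and writing $M_s:=\sup_{[0,T]}\lm_s(f(t))$, $M_{|\g|}:=\sup_{[0,T]}\lm_{|\g|}(f(t))$, one arrives at a recursion of the schematic form
\[
U_{n+1} \leq B^{n}\Bigl[\tfrac{\mathcal{C}_1}{t_*}\,K^{-\sigma_1}\,M_s^{|\g|/s}\,U_n^{1+\mu_1} \;+\; \mathcal{C}_2\,M_{|\g|}\bigl(K + K^{1+\rho}\bigr)\,U_n^{1+\mu_2}\Bigr],
\]
where $B>1$ is an absolute constant and the exponents
\[
\sigma_1 := \frac{4s+d\g}{ds},\qquad \mu_1 := \frac{2s+d\g}{ds},\qquad \mu_2 \geq \tfrac{2}{d}
\]
are \emph{strictly positive} precisely by virtue of the hypotheses $s>\tfrac{d}{2}|\g|$ (which secures $\sigma_1,\mu_1>0$) and $p_\g\in\bigl(d/(d+\g),3\bigr)$ (which secures $\mu_2>0$ after combining the various $\ell$-linear contributions in \eqref{eq:ElT2T3}).

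The second step is a standard nonlinear iteration lemma (in the spirit of \cite[Proposition 3.12 and Lemma 4.3]{ABDL1}): for a super-linear recurrence $U_{n+1}\le B^n\,\Phi(U_n)$, there exists an explicit smallness threshold $\eta_*=\eta_*(B,\mathcal{C}_i,\sigma_1,\mu_i,K,M_s,M_{|\g|},t_*)$ such that $U_0\le\eta_*$ implies $U_n\to 0$. Inverting the balance $U_0 \simeq \eta_*$ for $K$, the dominant first term (contributing $K^{-\sigma_1}t_*^{-1}U_0^{\mu_1}$) dictates the scaling
\[
K \;\geq\; \mathsf{C}\;\mathscr{E}_0\!\bigl(\tfrac{t_*}{2},T\bigr)^{\alpha_1}\,M_s^{\alpha_2}\,\max\!\bigl(1,t_*^{-\,ds/(4s+d\g)}\bigr),
\]
with $\alpha_1,\alpha_2>0$ explicitly obtained from $1/\mu_1$ and $|\g|/(s\mu_1)$ respectively (corrected by the contribution of the second term, which is subdominant after absorbing one factor of $K$ into the super-linearity $U_n^{\mu_2}$). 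Taking the minimal such $K$ produces \eqref{eq:Linf}.

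The main obstacle is the presence of the $\ell$-linear factor $(\ell+\ldots)$ in the second line of \eqref{eq:ElT2T3}: this term grows with $K$ rather than decaying like $K^{-\sigma_1}$, so one cannot naively make both contributions small by sending $K\to\infty$. The fix is to exploit the strict super-linearity $\mu_2>0$ of the $U_n$-power: once $U_n$ enters the regime $U_n\lesssim K^{-\tau}$ for suitable $\tau>0$, the factor $K\cdot U_n^{\mu_2}$ is still controlled, so the iteration contracts. Matching $\tau$ against the smallness condition for $U_0$ forces the two exponents $\alpha_1,\alpha_2$ to take the precise values announced, while the $t_*^{-1}$ coming from the $(T_2-T_1)^{-1}$ prefactor is converted, via the exponent $1/\sigma_1=ds/(4s+d\g)$, into the time weight $\max(1,t_*^{-ds/(4s+d\g)})$.
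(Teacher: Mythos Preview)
Your approach is essentially the paper's: the same level and time sequences, the same application of Proposition~\ref{main-energy-functional}, and a comparison argument to force $U_n\to0$. Two remarks. First, the bound $\mu_2\ge 2/d$ is incorrect (the term $E_n^{1/p_\g+2/d}$ gives $\mu_2=1/p_\g-(d-2)/d$, which is only $>0$); your later use of ``$\mu_2>0$'' is the right statement. Second, the ``main obstacle'' you identify is not one: once you substitute $\ell\le K$ and $\ell-k=K\,2^{-n-1}$ and expand the bracket in \eqref{eq:ElT2T3}, every resulting term carries a \emph{negative} net power of $K$ (the three $K$-exponents are $-\tfrac{4s+d\g}{ds}$, $1-\tfrac{2}{p_\g}+\tfrac{d-4}{d}<0$ since $p_\g<\tfrac{d}{d-2}$, and $-\tfrac{4}{d}$). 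The paper therefore bypasses your absorption trick and instead builds an explicit supersolution $E_n^\star=E_0\,Q^{-n}$, choosing first $Q$ to kill the geometric growth and then $K$ term-by-term so that each of the three contributions is at most $1/3$; this gives the constants $K_i(t_*,T)$ and exponents directly.
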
 
\begin{proof} Let us first prove the appearance of the $L^{\infty}$-norm. In all the proof, for notations simplicity, we will denote by $C$ any positive constant depending on $f_{\rm in}$ which will change from line to line. Let us fix $T>t_{*}>0$ and let $K>0$ (to be chosen sufficiently large).  We consider the sequence of levels and times 
$$\l_{n}=K\,\left(1-\frac{1}{2^{n}}\right), \qquad t_{n}:=t_{*}\left(1-\frac{1}{2^{n+1}}\right), \qquad n \in \N.$$
We apply Proposition \ref{main-energy-functional} with the choices
$$k= \ell_{n} < \ell_{n+1}=\ell\,,\quad \quad T_{1}=t_{n}<t_{n+1} =T_{2}\,,\qquad {E}_{n}:=\mathscr{E}_{\l_{n}}(t_{n},T),$$
so that $\ell-k=K2^{-n-1},$ $T_{2}-T_{1}=t_{*}2^{-n-2}$ and we conclude that
\begin{multline*}
E_{n+1} \leq C t_{*}^{-1} 2^{n+2} K^{-\frac{4s+d\g}{ds}}2^{(n+1)\frac{4s+d\g}{ds}}\bm{y}_{s}^{\frac{|\g|}{s}}E _{n}^{1+\frac{2s+d\g}{ds}}\\
+C K^{1-\left(\frac{2}{p_{\g}}-\frac{d-4}{d}\right)}\bm{y}_{s}^{\frac{|\g|}{s}}2^{(n+1)\left(\frac{2}{p_{\g}}-\frac{d-4}{d}\right)}E_{n}^{\frac{1}{p_{\g}}+\frac{2}{d}}
+CK^{-\frac{4}{d}}\bm{y}_{s}^{\frac{|\g|}{s}}2^{(n+1)\left(1+\frac{4}{d}\right)} E_{n}^{1+\frac{2}{d}}\end{multline*}
where we set 
$\bm{y}_{s}=\sup_{t \in [0,T]}\lm_{s}(f(t))$
and used that $\l_{n} \leq K$ for any $n$ and
$\lm_{|\g|}(t) \le \bm{y}_{s}^{\frac{|\g|}{s}},$
by H\"older's inequality since $s>\frac{d}{2}|\g|\ge |\g|$ and $\|f(t)\|_{L^1}=1$ for any $t\ge0$. Let us note that the assumption $f_{\mathrm{in}}\in L^1_{s}(\R^d)$ together with Theorem \ref{theo:prop_mom}  ensure that $\bm{y}_{s}<\infty$. Then, rearranging the terms, we deduce that
\begin{multline}\label{DeG-ineq}
  {E}_{n+1}\leq C \bm{y}_{s}^{\frac{|\g|}{s}} t_{*}^{-1} K^{-\frac{4s+d\g}{ds}} 2^{n\left(1+\frac{4s+d\g}{ds}\right)} E_{n}^{1+\frac{2s+d\g}{ds}}  \\
  + C \bm{y}_{s}^{\frac{|\g|}{s}} K^{1-\frac{2}{p_{\g}}+\frac{d-4}{d}} 2^{n \left(\frac{2}{p_{\g}}-\frac{d-4}{d}\right) } E_{n}^{\frac{1}{p_{\g}}+\frac{2}{d}}
  + C\bm{y}_{s}^{\frac{|\g|}{s}} K^{-\frac{4}{d}} 2^{n\left(1+\frac{4}{d}\right)}E_{n}^{1+\frac{2}{d}}.
\end{multline}
Notice that
\begin{equation*} 
{E}_{0}=\mathscr{E}_{0}\left(\frac{t_{*}}{2},T\right)\leq \frac{1}{2}\sup_{t \in [\frac{t_{*}}{2},T)}\|f(t)\|_{L^{2}}^{2}+c_{0}\int_{\frac{t_{*}}{2}}^{T}\left\|\nabla \left(\langle \cdot\rangle^{\frac{\g}{2}}f(\tau)\right)\right\|_{L^{2}}^{2}\d\tau. \end{equation*}
We may apply Theorem \ref{theo:boundedL2} with $s=0$ and $p=2$ since, in this case, $\mu(0,2)= \frac{|\g|d}{4}<\frac{|\g|d}{2}$. Thereby, we deduce that
$$\sup_{t \in [\frac{t_{*}}{2},T)}\|f(t)\|_{L^{2}}^{2}<\infty.$$
Moreover, integrating \eqref{final-L2} over $\left(\frac{t_{*}}{2},T\right)$, still for $s=0$ and $p=2$, we obtain that
$$\int_{\frac{t_{*}}{2}}^{T}\left\|\nabla \left(\langle \cdot\rangle^{\frac{\g}{2}}f(\tau)\right)\right\|_{L^{2}}^{2}\d\tau <\infty.$$
Hence, $E_{0}$ is finite. We now look for a choice of the parameters  {$K$ and $Q >0$} ensuring that  the sequence $(E_{n}^{\star})_{n}$ defined by
$${E}^{\star}_{n}:={E}_0\,Q^{-n}, \qquad n \in \N\,,$$
satisfies \eqref{DeG-ineq} with the reversed inequality. Notice that 
\begin{multline}\label{deG-ineqstar}
{E}^{\star}_{n+1}\geq C \bm{y}_{s}^{\frac{|\g|}{s}} t_{*}^{-1} K^{-\frac{4s+d\g}{ds}} 2^{n\left(1+\frac{4s+d\g}{ds}\right)} \left(E_{n}^{\star}\right)^{1+\frac{2s+d\g}{ds}}  \\
  + C \bm{y}_{s}^{\frac{|\g|}{s}} K^{1-\frac{2}{p_{\g}}+\frac{d-4}{d}} 2^{n \left(\frac{2}{p_{\g}}-\frac{d-4}{d}\right) } \left(E_{n}^{\star}\right)^{\frac{1}{p_{\g}}+\frac{2}{d}}
  + C\bm{y}_{s}^{\frac{|\g|}{s}} K^{-\frac{4}{d}} 2^{n\left(1+\frac{4}{d}\right)}\left(E_{n}^{\star}\right)^{1+\frac{2}{d}},
\end{multline}
is equivalent to
\begin{multline*}
1\geq C \bm{y}_{s}^{\frac{|\g|}{s}} t_{*}^{-1} K^{-\frac{4s+d\g}{ds}} 2^{n\left(1+\frac{4s+d\g}{ds}\right)} E_{0}^{\frac{2s+d\g}{ds}} Q^{1-n\frac{2s+d\g}{ds}} \\
  + C \bm{y}_{s}^{\frac{|\g|}{s}} K^{1-\frac{2}{p_{\g}}+\frac{d-4}{d}} 2^{n \left(\frac{2}{p_{\g}}-\frac{d-4}{d}\right) } E_{0}^{\frac{1}{p_{\g}}+\frac{2}{d}-1}Q^{1-n\left(\frac{1}{p_{\g}}+\frac{2}{d}-1\right)}\\
  + C\bm{y}_{s}^{\frac{|\g|}{s}} K^{-\frac{4}{d}} 2^{n\left(1+\frac{4}{d}\right)}E_{0}^{\frac{2}{d}} Q^{1-n\frac{2}{d}},
\end{multline*}
which we can rewrite as 
\begin{multline*}
1\geq C \bm{y}_{s}^{\frac{|\g|}{s}} t_{*}^{-1} K^{-\frac{4s+d\g}{ds}} E_{0}^{\frac{2s+d\g}{ds}} \left[2^{1+\frac{4s+d\g}{ds}} Q^{-\frac{2s+d\g}{ds}} \right]^n\\
  + C \bm{y}_{s}^{\frac{|\g|}{s}} K^{2-\frac{2}{p_{\g}}-\frac{4}{d}} E_{0}^{\frac{1}{p_{\g}}+\frac{2}{d}-1} \left[ 2^{\frac{2}{p_{\g}}-\frac{d-4}{d}} Q^{-\left(\frac{1}{p_{\g}}+\frac{2}{d}-1\right)}\right]^n
  + C\bm{y}_{s}^{\frac{|\g|}{s}} K^{-\frac{4}{d}}E_{0}^{\frac{2}{d}} \left[ 2^{1+\frac{4}{d}}Q^{-\frac{2}{d}}\right]^n.
\end{multline*}
We first choose $Q$ in such a way that all the terms $\left[\cdots\right]^{n}$ are smaller than one, i.e.
$$Q=\max\left(2^{\frac{d+4}{2}},2^{\frac{4s+d(\g+s)}{2s+d\g}},2^{\frac{2d-(d-4)p_{\g}}{d-(d-2)p_{\g}}}\right), \qquad s >\frac{d}{2}|\g|\,.$$ With such a choice, \eqref{deG-ineqstar} would hold as soon as 
\begin{equation}\label{deG-ine1}
1\geq C \bm{y}_{s}^{\frac{|\g|}{s}} t_{*}^{-1} K^{-\frac{4s+d\g}{ds}} E_{0}^{\frac{2s+d\g}{ds}} \\
  + C \bm{y}_{s}^{\frac{|\g|}{s}} K^{2-\frac{2}{p_{\g}}-\frac{4}{d}} E_{0}^{\frac{1}{p_{\g}}+\frac{2}{d}-1} 
  + C\bm{y}_{s}^{\frac{|\g|}{s}} K^{-\frac{4}{d}}E_{0}^{\frac{2}{d}}.
  \end{equation}
This would hold for instance if each term of the sum is smaller than $\frac{1}{3}$, and  a direct computation shows that this amounts to choose
 {$$K \geq K(t_{*},T)=\max\left\{K_{1}(t_{*},T),K_{2}(t_{*},T),K_{3}(t_{*},T) \right\},$$
with
\begin{equation}\label{eq:Kt_{*}}
\begin{cases}K_{1}(t_{*},T)= C\,E_{0}^{\frac{2s+d\g}{4s+d\g}}\bm{y}_{s}^{\frac{d|\g|}{4s+d\g}}t_{*}^{-\frac{ds}{4s+d\g}}\,, \\
K_{2}(t_{*},T)= C \, E_{0}^{\frac{1}{2}}\bm{y}_{s}^{\frac{|\g| p_{\g}d}{2s(d -p_{\g}(d-2))}},\qquad \qquad K_{3}(t_{*},T)=C \,\bm{y}_{s}^{\frac{d|\g|}{4s}}\,E_{0}^{\frac{1}{2}}.\end{cases}
\end{equation}
By a comparison principle  (because ${E}_{0} ={E}^{\star}_{0}$), one concludes that $E_{n} \leq E_{n}^{\star}$ for all $n \in \N$ 
and in particular, since $Q >1$,
$\lim_{n}E_{n}=0.$ Since $\lim_{n}t_{n}=t_{*}$ and $\lim_{n}\ell_{n}=K$,  this implies that 
\begin{equation*}
\sup_{t\in[t_*,T)}\|f^{+}_{K}(t)\|_{L^{2}}= 0\,,
\end{equation*}
for $K \geq K(t_{*},T)$ and, in particular, 
\begin{equation*}\label{Linfinito}
\| f(t)\|_{L^{\infty}} \leq K(t_*,T)\,,\qquad  0<t_{*}\leq t < T\,.
\end{equation*}
Recall that {$K(t_{*},T)=\max\{K_{i}(t_{*},T),i=1,2,3\}$} as defined in \eqref{eq:Kt_{*}}. We estimate it roughly by the sum of these   terms, i.e.
$K(t_{*},T) \leq \sum_{i=1}^{3}K_{i}(t_{*},T)\,,$
and notice that the dependence with respect to $T$ and $t_*$ is encapsulated in the terms $E_{0}$, $t_{*}^{-1}$ and $\bm{y}_{s}$.}
\end{proof}

\subsubsection{Case $\g =-2$}

In that case, the fundamental result for the implementation of the level set analysis reads.
\begin{prop}\label{main-energy-functional-2} Let $\g=-2$ and $T>0$. Let a nonnegative initial datum $f_{\mathrm{in}}\in L^1_2(\R^d) \cap L \log L (\R^d)$ satisfying \eqref{eq:Mass} be given. Let  $f(t,\cdot)$ be a weak-solution to \eqref{LFD}. Then, for any  $s > d$ and any $\alpha\in\left(\frac12,1\right)$, there exist some positive constants $ C_{1},C_{2},C_{3}$ depending only on $s$, $T$ and $f_{\rm in}$ such that, for any times $0 \leq T_{1} < T_{2} \leq T$ and $0 \leq k < \ell$, 
 \begin{multline}\label{eq:ElT2T3-2}
\mathscr{E}_{\l}(T_{2},T) \leq \frac{{C}_{1}}{T_{2}-T_{1}}(\l-k)^{{-\frac{4s-2d}{ds}}}\left[\sup_{\tau \in [T_{1},T]}\lm_{s}(f(\tau))\right]^{{\frac{2}{s}}}\,\mathscr{E}_{k}(T_{1},T)^{{\frac{(d+2)s-2d}{ds}}}
\\
+{C}_{2}\left(\l-k\right)^{-\frac4{d}} \mathscr{E}_{k}(T_{1},T)^{\frac{d+2}{d}}\\
+ C_{3}\l\,(\l-k)^{-2\alpha} \left(\sup_{t\in[T_1,T]}\lm_{\frac1{1-\alpha}}(f(t))\right)^{2(1-\alpha)} \mathscr{E}_{k}(T_{1},T)^{ 2\alpha}\,. 
\end{multline}
\end{prop}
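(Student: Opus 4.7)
The argument parallels the proof of Proposition \ref{main-energy-functional}. First, I integrate \eqref{eq:13Ric} over $(t_{1},t_{2})$ with $T_{1}\le t_{1}\le T_{2}\le t_{2}\le T$, take the supremum over $t_{2}\in[T_{2},T]$, and then average in $t_{1}\in[T_{1},T_{2}]$. This yields
\begin{align*}
\mathscr{E}_{\ell}(T_{2},T) &\le \frac{1}{2(T_{2}-T_{1})}\int_{T_{1}}^{T}\|f_{\ell}^{+}(\tau)\|_{L^{2}}^{2}\,\d\tau + \kappa_{0}\int_{T_{1}}^{T}\|\langle\cdot\rangle^{-1} f_{\ell}^{+}(\tau)\|_{L^{2}}^{2}\,\d\tau \\
&\quad - \ell\int_{T_{1}}^{T}\int_{\R^{d}} \bm{c}_{-2}[f(\tau)](v)\, f_{\ell}^{+}(\tau,v)\,\d v\,\d\tau\,.
\end{align*}
All three terms are to be estimated using Lemma~\ref{lem:changelevel} together with the definition of $\mathscr{E}_{k}(T_{1},T)$.

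The first two terms are handled exactly as for $-2<\g<0$. For the $L^{2}$-integral, I apply \eqref{eq:flLq} with $q=(2d+4)/d-2/s$ (the $\g=-2$ specialisation of the exponent used in Proposition~\ref{main-energy-functional}); the admissibility requirement $q\in\big((2d+2)/d,(2d+4)/d\big)$ is equivalent to the assumption $s>d$. Bounding $\sup_{t\in[T_{1},T]}\|f_{k}^{+}(t)\|_{L^{2}}^{2(q-(2d+2)/d)}$ by a power of $\mathscr{E}_{k}(T_{1},T)$ and $\int_{T_{1}}^{T}\|\nabla(\langle\cdot\rangle^{-1}f_{k}^{+})\|_{L^{2}}^{2}\d\tau$ by $c_{0}^{-1}\mathscr{E}_{k}(T_{1},T)$ yields the first term of \eqref{eq:ElT2T3-2} with the exponent $((d+2)s-2d)/(ds)$. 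For the $\kappa_{0}$-integral, \eqref{eq:flL2} (whose statement is $\g$-independent) immediately produces the second contribution of the form $(\ell-k)^{-4/d}\mathscr{E}_{k}(T_{1},T)^{(d+2)/d}$.

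The core of the proof is the singular term. Rewriting it as
$$
-\ell\int\bm{c}_{-2}[f]f_{\ell}^{+}\,\d v = (d-1)(d-2)\,\ell\int\!\!\int \frac{f(v_{*})\,f_{\ell}^{+}(v)}{|v-v_{*}|^{2}}\,\d v\,\d v_{*}\,,
$$
the plan is to apply the Hardy--Littlewood--Sobolev inequality with matched exponents $p=q=d/(d-1)$ to dominate the double integral by $C\|f\|_{L^{d/(d-1)}}\|f_{\ell}^{+}\|_{L^{d/(d-1)}}$, and then invoke \eqref{eq:flLd/(d-1)} for the second factor with the choice $s=1/(1-\alpha)$. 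Since $\alpha\in(1/2,1)$ is equivalent to $s>2$, this choice is admissible, and the identifications $(s-1)/s=\alpha$, $2/s=2(1-\alpha)$, $(s-2)/s=2\alpha-1$ together with $\sup_{t}\|f_{k}^{+}\|_{L^{2}}^{2(2\alpha-1)}\le C\mathscr{E}_{k}^{2\alpha-1}$ and $\int\|\nabla(\langle\cdot\rangle^{-1}f_{k}^{+})\|_{L^{2}}^{2}\d\tau\le c_{0}^{-1}\mathscr{E}_{k}$ precisely produce the required structure $(\ell-k)^{-2\alpha}\big(\sup\lm_{1/(1-\alpha)}\big)^{2(1-\alpha)}\mathscr{E}_{k}^{2\alpha}$ (the exponent on $\mathscr{E}_{k}$ arising as $(2\alpha-1)+1$ when combining the $L^{2}$-supremum with the time-integrated Dirichlet energy).

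The main obstacle is the residual factor $\|f\|_{L^{d/(d-1)}}$, which is not directly controlled by the energy functional. To absorb it into the constant $C_{3}$, I intend to use the decomposition $f\le f_{k}^{+}+k\ind_{\{f\le k\}}$, combined with the trivial bound $\|\min(f,k)\|_{L^{d/(d-1)}}\le k^{1/d}\|f\|_{L^{1}}^{(d-1)/d}$ and Young's inequality to reabsorb the lower-order cross terms $\ell\,k^{1/d}\|f_{\ell}^{+}\|_{L^{d/(d-1)}}$ back into the principal $\ell\|f_{\ell}^{+}\|_{L^{d/(d-1)}}^{2}$ term. The residual constants are then controlled by the uniform-in-time moment bounds of Theorem~\ref{theo:prop_mom} and the $L^{p}$-appearance provided by Theorem~\ref{theo:boundedL2}. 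Careful book-keeping of the exponents during this absorption step, and ensuring that no spurious powers of $\ell$ are introduced, is the most technically delicate point of the argument.
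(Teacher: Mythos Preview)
Your treatment of the first two terms on the right–hand side (the time–averaged $L^{2}$–integral via \eqref{eq:flLq} with $q=\frac{2d+4}{d}-\frac{2}{s}$, and the $\kappa_{0}$–integral via \eqref{eq:flL2}) is exactly what the paper does.

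The divergence is in the singular term, and there your plan has a genuine gap. After HLS you obtain $C\ell\,\|f\|_{L^{d/(d-1)}}\|f_{\ell}^{+}\|_{L^{d/(d-1)}}$ and propose to split $f$ at level $k$ (or $\ell$) and reabsorb the cross term by Young's inequality. But Young applied to $\ell\,k^{1/d}\|f_{\ell}^{+}\|_{L^{d/(d-1)}}$ leaves a remainder of order $\ell\,k^{2/d}$ (or $\ell^{1+2/d}$ if you split at level $\ell$), which is \emph{not} of the form required in \eqref{eq:ElT2T3-2}: it carries no positive power of $\mathscr{E}_{k}$ and no negative power of $(\ell-k)$, so in the subsequent De Giorgi iteration $(\ell_{n},k_{n})\to (K,K)$ it tends to a nonzero constant and prevents $E_{n}\to 0$. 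Moreover, the ``principal'' piece $\ell\|f_{k}^{+}\|_{L^{d/(d-1)}}\|f_{\ell}^{+}\|_{L^{d/(d-1)}}$ is bounded above by $\ell\|f_{k}^{+}\|_{L^{d/(d-1)}}^{2}$, for which \eqref{eq:flLd/(d-1)} is not directly applicable (that estimate shifts from level $\ell$ to level $k$, not from $k$ to a lower level).

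The paper avoids this difficulty entirely by first invoking \cite[Lemma~2.2]{ABDL2}, which gives the pointwise comparison $-\bm{c}_{-2}[f]\le -\bm{c}_{-2}[f_{\ell}^{+}]$; this replaces $f$ by $f_{\ell}^{+}$ \emph{inside the convolution before} applying HLS, so one lands directly on $C\ell\,\|f_{\ell}^{+}\|_{L^{d/(d-1)}}^{2}$ with no residual $\|f\|$–factor. From there \eqref{eq:alphaf} and the interpolation behind \eqref{eq:flLd/(d-1)} (with $s=\frac{1}{1-\alpha}$) produce the third line of \eqref{eq:ElT2T3-2} cleanly. You should use that lemma rather than trying to decompose $f$ after HLS.
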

 \begin{proof} Proceeding as in the proof of Proposition \ref{main-energy-functional}, we  still have
\begin{multline*}
\mathscr{E}_{\l}(T_{2},T) \leq \frac{1}{2(T_{2}-T_{1})}\int_{T_{1}}^{T_{2}}\|f_{\l}^{+}(t_{1})\|_{L^{2}}^{2}\d t_{1}+ \kappa_{0}\int_{T_{1}}^{T}\|\langle \cdot \rangle^{-1}f_{\l}^{+}(\tau)\|_{L^{2}}^{2}  \d \tau\\
- \l \int_{T_{1}}^{T}\d\tau\int_{\R^{d}}\bm{c}_{-2}[f](\tau,v)\,f_{\l}^{+}(\tau,v)\d v,\end{multline*}
and it follows from \eqref{eq:intl2} and \eqref{eq:f+lL2} that 
\begin{multline*}
  \mathscr{E}_{\l}(T_{2},T) \leq \frac{{C}_{1}}{T_{2}-T_{1}}(\l-k)^{{-\frac{4s-2d}{ds}}}\left[\sup_{\tau \in [T_{1},T]}\lm_{s}(f(\tau))\right]^{{\frac{2}{s}}}\,\mathscr{E}_{k}(T_{1},T)^{{\frac{(d+2)s-2d}{ds}}}\\
  +{C}_{2}\left(\l-k\right)^{-\frac4{d}} \mathscr{E}_{k}(T_{1},T)^{\frac{d+2}{d}} 
- \l \int_{T_{1}}^{T}\d\tau\int_{\R^{d}}\bm{c}_{-2}[f](\tau,v)\,f_{\l}^{+}(\tau,v)\d v,\end{multline*}
Therefore, it only remains to estimate the last term. First, by \cite[Lemma 2.2]{ABDL2}, we have
$$-\bm{c}_{-2}[f] \le -\bm{c}_{-2}[f_{\l}^{+}]$$
Then, we deduce from the Hardy-Littlewood-Sobolev inequality that
$$-\l \int_{\R^{d}}\bm{c}_{-2}[f](\tau,v)\,f_{\l}^{+}(\tau,v)\d v \le C \l \|f_{\l}^{+}\|_{L^{q}}\|f_{\l}^{+}\|_{L^{r}},\qquad \frac1{q}+\frac1{r}=2-\frac2{d}.$$
With $q=r=\frac{d}{d-1}$ and \eqref{eq:alphaf}, it becomes, for any  $0\le k <l$ and $\alpha \ge 0$, 
$$-\l \int_{\R^{d}}\bm{c}_{-2}[f](\tau,v)\,f_{\l}^{+}(\tau,v)\d v \le C \l \,(\l- k)^{-2\alpha} \,\|f_{k}^{+}\|_{L^{\frac{d(1+\alpha)}{d-1}}}^{2(1+\alpha)}.$$
Applying now \eqref{interp} with
$$q=\frac{d(1+\alpha)}{d-1}, \qquad \theta_1=\frac{1-\alpha}{1+\alpha}, \qquad \theta_2 = \frac{2\alpha-1}{1+\alpha}, \qquad \theta_3=\frac1{1+\alpha}, \qquad s=\frac1{1-\alpha}$$
and the Sobolev inequality, we obtain for any $\alpha \in\left(\frac12,1\right)$,
\begin{equation*}\begin{split}
  -\l \int_{\R^{d}}\bm{c}_{-2}[f](\tau,v)\,f_{\l}^{+}(\tau,v)&\d v   
  \le  C \l (\l- k)^{-2\alpha} \|\langle\cdot\rangle^{\frac1{1-\alpha}} f_{k}^{+}\|_{L^{1}}^{2(1-\alpha)}\|f_{k}^{+}\|_{L^{2}}^{2(2\alpha-1)}\|\langle\cdot\rangle^{-1} f_{k}^{+}\|_{L^{\frac{2d}{d-2}}}^{2} \\
  & \le   C \l (\l- k)^{-2\alpha} \|\langle\cdot\rangle^{\frac1{1-\alpha}} f_{k}^{+}\|_{L^{1}}^{2(1-\alpha)}\|f_{k}^{+}\|_{L^{2}}^{2(2\alpha-1)}\|\nabla\left(\langle\cdot\rangle^{-1}f_{k}^{+}\right)\|_{L^{2}}^{2}.
  \end{split}\end{equation*}
As in the proof of Proposition \ref{main-energy-functional}, it follows from the definition of the energy functional that
\begin{multline*}
  -\l \int_{T_{1}}^{T} \int_{\R^{d}}\bm{c}_{-2}[f](\tau,v)\,f_{\l}^{+}(\tau,v)\,\d v\, \d \tau \\\leq C_{3}\l\,(\l-k)^{-2\alpha} \left(\sup_{t\in[T_1,T]}\lm_{\frac1{1-\alpha}}(f(t))\right)^{2(1-\alpha)} \mathscr{E}_{k}(T_{1},T)^{ 2\alpha}, 
\end{multline*}
for some positive constant $C_{3}$. This completes the proof of \eqref{eq:ElT2T3-2}.
\end{proof} 
\begin{theo} \label{Linfinito*-2}Under the assumptions of  Proposition \ref{main-energy-functional-2}, let  $f(t,\cdot)$ be a weak-solution to \eqref{LFD}. Let us assume that $f_{\rm in}\in L^1_{s}(\R^d)$ for some $s >d$. Then, for any $T > t_{*} >0$,
\begin{equation}\label{eq:Linf-2}
\sup_{t \in [t_{*},T)}\left\|f(t)\right\|_{L^{\infty}} \leq C \,\mathscr{E}_{0}\left(\frac{t_{*}}{2},T\right)^{\alpha_1}\,  \left(\sup_{t\in[T_1,T]}\lm_{s}(f(t))\right)^{\alpha_2}\, \max\left(1,t_{*}^{-\frac{ds}{4s-2d}}\right),
\end{equation}
for some explicit $C>0$, $\alpha_1$, $\alpha_2$.
\end{theo}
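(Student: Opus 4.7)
The strategy is to mirror the De Giorgi iteration used to prove Theorem \ref{Linfinito*}, with Proposition \ref{main-energy-functional-2} playing the role of Proposition \ref{main-energy-functional}. Concretely, I would fix $T > t_{*} > 0$ and a parameter $K>0$ to be adjusted at the end, introduce the same sequences
$$
\l_n = K\bigl(1 - 2^{-n}\bigr), \qquad t_n = t_{*}\bigl(1 - 2^{-n-1}\bigr),\qquad n \in \N,
$$
and set $E_n := \mathscr{E}_{\l_n}(t_n,T)$. Feeding $k=\l_n$, $\ell=\l_{n+1}$, $T_1=t_n$, $T_2=t_{n+1}$ into \eqref{eq:ElT2T3-2} and using $\l_{n+1}-\l_n = K 2^{-n-1}$, $t_{n+1}-t_n = t_{*}2^{-n-2}$, $\l_{n+1}\leq K$, together with the uniform bound $\bm{y}_s := \sup_{t\in[0,T]}\lm_s(f(t))<\infty$ (guaranteed by Theorem \ref{theo:prop_mom} under the hypothesis $f_{\rm in}\in L^{1}_s(\R^d)$ with $s>d$) and an interpolation to control $\lm_{1/(1-\alpha)}$ by $\bm{y}_s$ for an $\alpha\in(1/2,1)$ satisfying $1/(1-\alpha)\leq s$, one arrives at a recursive inequality of the form
$$
E_{n+1} \leq A_1\,t_{*}^{-1}K^{-\tfrac{4s-2d}{ds}}2^{c_1 n}E_n^{1+\tfrac{2(s-d)}{ds}} + A_2\,K^{-\tfrac{4}{d}}2^{c_2 n}E_n^{1+\tfrac{2}{d}} + A_3\,K^{1-2\alpha}2^{c_3 n}E_n^{2\alpha},
$$
with $A_1,A_2,A_3 > 0$ depending on $s$, $\alpha$, $T$, and $f_{\rm in}$, and explicit positive constants $c_1,c_2,c_3$.

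The decisive structural feature is that each exponent of $E_n$ is strictly larger than $1$: $1+2(s-d)/(ds)>1$ by $s>d$, $(d+2)/d>1$ trivially, and $2\alpha>1$ by $\alpha>1/2$. This superlinearity is precisely what permits comparison with a geometric sequence $E_n^\star := E_0 Q^{-n}$. I would first pick $Q>1$ large enough that each factor $2^{c_i n}Q^{-\beta_i n}$ (where $\beta_i>0$ is the super-unit part of the $i$-th exponent) is bounded by $1$ uniformly in $n$, and then pick $K$ large enough that the three residual algebraic requirements of the form $A_i K^{-\mu_i}E_0^{\nu_i}\cdot(\text{moment/time factors})\leq\tfrac{1}{3}$ are met. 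The tightest $t_{*}$-dependence comes from the first term and reads
$$K \geq C\,E_0^{(s-d)/(2s-d)}\,\bm{y}_s^{d/(2s-d)}\,t_{*}^{-ds/(4s-2d)},$$
while the other two terms give $t_{*}$-independent bounds. Induction on the reversed inequality gives $E_n\leq E_n^\star\to 0$, hence (passing $n\to\infty$, using $t_n\to t_{*}$ and $\l_n\to K$) $\sup_{t\in[t_{*},T)}\|f_K^+(t)\|_{L^2}=0$, i.e.\ $\|f(t)\|_{L^\infty}\leq K$ on $[t_{*},T)$. The base quantity $E_0 = \mathscr{E}_0(t_{*}/2,T)$ is controlled via Theorem \ref{theo:boundedL2} with $p=2$, $s=0$ (whose hypothesis \eqref{eq:extra_assumption} reduces to $f_{\rm in}\in L^1_d$, implied by $s>d$) together with integration of \eqref{final-L2} in time over $[t_{*}/2,T]$ to bound the dissipation piece.

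The main obstacle I anticipate is not conceptual but bookkeeping: the choice of $\alpha\in(1/2,1)$ must be simultaneously compatible with moment control ($\alpha\geq 1-1/s$) and the iteration ($\alpha>1/2$), and the final exponents $\alpha_1,\alpha_2$ in \eqref{eq:Linf-2} have to be read off the dominant first term of the recursion cleanly. Since $s>d\geq 2$, the admissible window $(\max(1/2,1-1/s),1)$ for $\alpha$ is nonempty, so a valid choice exists. Once $\alpha$ is fixed and the three constants $A_i$ are tracked, \eqref{eq:Linf-2} follows by the same final comparison as in the derivation of \eqref{eq:Linf} from \eqref{eq:Kt_{*}}, with the only substantive difference being the exponent $ds/(4s-2d)$ in the $t_{*}^{-1}$-term (versus $ds/(4s+d\g)$ in the case $-2<\g<0$), which is the natural specialization at $\g=-2$.
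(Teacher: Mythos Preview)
Your proposal is correct and follows essentially the same De Giorgi iteration as the paper's proof, which likewise feeds the sequences $\l_n,t_n$ into \eqref{eq:ElT2T3-2}, compares with $E_n^\star=E_0Q^{-n}$, and fixes $\alpha=1-\tfrac{1}{s}$ so that $\lm_{1/(1-\alpha)}=\lm_s=\bm{y}_s$. One minor slip: in your final paragraph the moment constraint should read $\alpha\leq 1-1/s$ (equivalently $1/(1-\alpha)\leq s$, as you wrote earlier), so the admissible window is $(1/2,\,1-1/s]$, not $(\max(1/2,1-1/s),1)$; since $s>d\geq 3$ this interval is nonempty and the paper's endpoint choice $\alpha=1-1/s$ lies in it.
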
 
\begin{proof} We proceed here as in the proof of Theorem \ref{Linfinito*}. Therefore, we only give the main steps. Let us fix $T>t_{*}>0$ and let $K>0$ (to be chosen sufficiently large).  We consider the sequence of levels and times as in Theorem \ref{Linfinito*} and get
\begin{multline}\label{DeG-ineq-2}
  {E}_{n+1}\leq C \bm{y}_{s}^{\frac{2}{s}} t_{*}^{-1} K^{-\frac{4s-2d}{ds}} 2^{n\left(1+\frac{4s-2d}{ds}\right)} E_{n}^{1+\frac{2s-2d}{ds}}  \\
  + C K^{-\frac{4}{d}} 2^{\frac{4n}{d}}E_{n}^{1+\frac{2}{d}}
  + C\bm{y}_{\frac{1}{1-\alpha}}^{2(1-\alpha)} K^{1-2\alpha} \,2^{2\alpha n}\,E_{n}^{2\alpha},
\end{multline}
where notations are those of Theorem \ref{Linfinito*} and  $\alpha$ to be chosen later on. As before, ${E}_{0}=\mathscr{E}_{0}\left(\frac{t_{*}}{2},T\right)$ is finite.
We now look for a choice of the parameters  {$K$ and $Q >0$} ensuring that  the sequence $(E_{n}^{\star})_{n}$ defined by
$${E}^{\star}_{n}:={E}_0\,Q^{-n}, \qquad n \in \N\,,$$
satisfies \eqref{DeG-ineq-2} with the reversed inequality. This amounts to satisfying
\begin{multline}\label{deG-ineqstar-2}
1\geq C \bm{y}_{s}^{\frac{2}{s}} t_{*}^{-1} K^{-\frac{4s-2d}{ds}} E_{0}^{\frac{2s-2d}{ds}} \left[2^{1+\frac{4s-2d}{ds}} Q^{-\frac{2s-2d}{ds}} \right]^n\\
  + C K^{-\frac{4}{d}}E_{0}^{\frac{2}{d}} \left[ 2^{\frac{4}{d}}Q^{-\frac{2}{d}}\right]^n
  + C\bm{y}_{\frac1{1-\alpha}}^{2(1-\alpha)} K^{1-2\alpha} E_{0}^{2\alpha-1} \left[ 2^{2\alpha}Q^{-(2\alpha-1)}\right]^n.
\end{multline}
We first choose $Q$ in  such a way that all the terms $\left[\cdots\right]^{n}$ are smaller than one, i.e.
$$Q=\max\left(2^2,2^{\frac{4s+d(s-2)}{2s-2d}},2^{\frac{2\alpha}{2\alpha-1}}\right), \qquad s >d \,, \quad \alpha\in\left( \frac12,1\right)\, .$$ With such a choice, \eqref{deG-ineqstar-2} would hold as soon as 
\begin{equation*}
1\geq C \bm{y}_{s}^{\frac{2}{s}} t_{*}^{-1} K^{-\frac{4s-2d}{ds}} E_{0}^{\frac{2s-2d}{ds}}
  + C K^{-\frac{4}{d}} E_{0}^{\frac{2}{d}} 
  + C\bm{y}_{\frac1{1-\alpha}}^{2(1-\alpha)} K^{1-2\alpha}E_{0}^{2\alpha-1}.
  \end{equation*}
This would hold for instance if each term of the sum is smaller than $\frac{1}{3}$. Therefore, we choose
 $$K \geq K(t_{*},T)=\max\left\{K_{1}(t_{*},T),K_{2}(t_{*},T),K_{3}(t_{*},T) \right\}$$
with 
$$K_{1}(t_{*},T)= C\,E_{0}^{\frac{s-d}{2s-d}}\bm{y}_{s}^{\frac{d}{2s-d}}t_{*}^{-\frac{ds}{4s-2d}}, \quad K_{2}(t_{*},T)=C\sqrt{E_{0}}, \quad K_{3}(t_{*},T)=C \,\bm{y}_{s}^{\frac{2(1-\alpha)}{2\alpha-1}}\,E_{0}.$$
Here above, since $s>d>2$, one may choose $\alpha\in\left(\frac12,1\right)$ such that
$\frac1{1-\alpha}=s$. As in the proof of Theorem \ref{Linfinito*}, we then conclude that \eqref{eq:Linf-2} holds, where $K(t_*,T)$ can be roughly  estimated as  $K(t_{*},T) \leq \sum_{i=1}^{3}K_{i}(t_{*},T)\,.$
\end{proof}

\appendix

\section{Known results about solutions to the Landau equation}\label{sec:known}

We collect here several mathematical known results about the solutions to the Landau equation in the range of parameters we are dealing with here, i.e. 
$$- 2\leq \g <0.$$
The results in this Appendix are meant to serve as a mathematical tool-box for the core of the paper. One begins with the following  coercivity estimate for the matrix $\mathcal{A}[f]$ 
\begin{prop}\label{diffusion}
Let $0\leq f_{\mathrm{in}}\in L^{1}_{2}(\R^{d}) \cap L\log L(\R^{d})$ be fixed and satisfying  \eqref{eq:Mass}.  Then, there exists a  constant $K_{0} > 0,$ depending on {$H(f_{\mathrm{in}})$ and $\|f_{\rm in}\|_{L^{1}_{2}}$}   such that
$$
 \sum_{i,j} \, \mathcal{A}_{i,j}[f](v) \, \xi_i \, \xi_j 
\geq K_{0} \langle v \rangle^{\g} \, |\xi|^2 , \qquad \forall\, v,\, \xi \in \R^{d},
$$
holds for any  $f \in \mathcal{Y}_{0}(f_{\mathrm{in}})$.
\end{prop}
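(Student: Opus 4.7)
The plan is to reduce the coercivity bound to a lower bound on a weighted ``perpendicular'' variance of $f$ under the measure $\langle \vet\rangle^{\g} f(\vet)\,\d\vet$, and then use the entropy constraint built into $\mathcal{Y}_{0}(f_{\mathrm{in}})$ to prevent this variance from vanishing.

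For $\xi\in\R^{d}$ with $|\xi|=1$, a direct computation from the definition of $a_{i,j}$ and $\Pi$ gives
$$
\sum_{i,j}\mathcal{A}_{i,j}[f](v)\,\xi_i\,\xi_j \;=\; \int_{\R^{d}}|v-\vet|^{\g}\,\bigl|P_{\xi}(v-\vet)\bigr|^{2}\,f(\vet)\,\d\vet,
$$
where $P_{\xi}:=\mathrm{Id}-\xi\otimes\xi$ denotes the orthogonal projection onto $\xi^{\perp}$. Since $|v-\vet|\le|v|+|\vet|\le\sqrt{2}\,\langle v\rangle\langle\vet\rangle$ and $\g\le 0$, one has $|v-\vet|^{\g}\ge 2^{\g/2}\langle v\rangle^{\g}\langle\vet\rangle^{\g}$, so it is enough to prove that
$$
\inf_{w\in\R^{d},\;|\xi|=1}\;\int_{\R^{d}}\langle\vet\rangle^{\g}\,\bigl|w-P_{\xi}\vet\bigr|^{2}\,f(\vet)\,\d\vet\;\geq\; c_{0}\,>\,0
$$
uniformly over $f\in\mathcal{Y}_{0}(f_{\mathrm{in}})$, since $P_{\xi}v$ ranges over $\xi^{\perp}$ as $v$ varies in $\R^{d}$. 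Optimizing the quadratic $w\mapsto \int\langle\vet\rangle^{\g}|w-P_{\xi}\vet|^{2}f\,\d\vet$ over $w\in\xi^{\perp}$ identifies the infimum with the weighted variance of $\vet\mapsto P_{\xi}\vet$ with respect to the finite measure $\d\mu_{f}:=\langle\vet\rangle^{\g}f(\vet)\,\d\vet$, whose total mass is bounded below in terms of $\|f_{\mathrm{in}}\|_{L^{1}_{2}}$ by a Chebyshev argument using $\int f|v|^{2}\,\d v=d$.

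That variance vanishes only if $\mu_{f}$ is concentrated on a single affine line parallel to $\xi$, which is incompatible with $f\in L\log L$. To make the positivity quantitative uniformly on $\mathcal{Y}_{0}(f_{\mathrm{in}})$, I would argue by contradiction: along a minimizing sequence $(f_{n},\xi_{n},w_{n})$, the de la Vall\'ee Poussin theorem applied to $f_{\mathrm{in}}$ combined with $H(f_{n})\le H(f_{\mathrm{in}})$ and $\|f_{n}\|_{L^{1}_{2}}\le\|f_{\mathrm{in}}\|_{L^{1}_{2}}$ yields an equi-integrability modulus, hence weak $L^{1}$-compactness via Dunford--Pettis; any accumulation point $f_{\infty}$ still belongs to $\mathcal{Y}_{0}(f_{\mathrm{in}})$ by weak lower semicontinuity of the entropy and stability of the constraints \eqref{eq:Mass}. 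A limiting direction $\xi_{\infty}$ would then force $\mu_{f_{\infty}}$ to be supported on an affine line, contradicting the absolute continuity of $f_{\infty}$.

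The main obstacle is precisely guaranteeing that the resulting constant $K_{0}$ depends \emph{only} on $H(f_{\mathrm{in}})$ and $\|f_{\mathrm{in}}\|_{L^{1}_{2}}$: this is achieved thanks to the de la Vall\'ee Poussin function $\bm{\Psi}$ attached to $f_{\mathrm{in}}$ (constructed as in Corollary \ref{cor:epsilon_poincare}), which produces a common modulus of equi-integrability shared by every element of $\mathcal{Y}_{0}(f_{\mathrm{in}})$, and thereby makes the whole compactness argument quantitative in terms of these two numbers alone.
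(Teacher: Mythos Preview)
The paper does not prove this proposition: it is listed in Appendix~\ref{sec:known} among the ``known results'' serving as a toolbox, and no argument is given there. Your outline follows the classical route (write the quadratic form as $\int |v-\vet|^{\g}|P_{\xi}(v-\vet)|^{2}f_{*}\,\d\vet$, extract $\langle v\rangle^{\g}$, and reduce to a uniform lower bound on a weighted transverse variance), which is essentially how this estimate is obtained in the references the paper points to.

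Two points in your write-up need tightening. First, the assertion that any weak $L^{1}$ accumulation point $f_{\infty}$ still belongs to $\mathcal{Y}_{0}(f_{\mathrm{in}})$ is not correct as stated: the energy constraint $\int f_{\infty}|v|^{2}=d$ need not survive (one only gets $\leq d$ by lower semicontinuity). This is harmless for the contradiction, since all you actually use is $\int f_{\infty}=1$, which \emph{is} preserved by weak $L^{1}$ convergence, together with the fact that a nonzero $L^{1}$ function cannot be supported on an affine line. Second, your closing paragraph conflates two different things. Equi-integrability of the sequence $(f_{n})$ comes straight from the uniform entropy and second-moment bounds (this is the de la Vall\'ee Poussin criterion with $\Phi(t)=t\log^{+}t$); it has nothing to do with the function $\bm{\Psi}$ of Corollary~\ref{cor:epsilon_poincare}, which concerns tail moments of $f_{\mathrm{in}}$. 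Moreover, the dependence of $K_{0}$ on $H(f_{\mathrm{in}})$ and $\|f_{\mathrm{in}}\|_{L^{1}_{2}}$ alone is automatic once you notice that the class $\mathcal{Y}_{0}(f_{\mathrm{in}})$ itself is determined by these quantities; no extra ingredient is needed for that. What the compactness argument does \emph{not} give is an explicit constant: it is inherently non-constructive. The proofs in the literature (e.g.\ \cite{DeVi1,Desv,ABDL1}) proceed directly by restricting the integral to a ball where $f$ carries a fixed amount of mass away from any given line, using the entropy bound to control $|\{f>R\}|$, and thereby produce a quantitative $K_{0}$.
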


We next recall the main result from \cite{CDH} concerning the propagation of polynomial moments of solutions to \eqref{LFD}. 
\begin{theo}[\textit{\textbf{Lemma 7 of \cite{CDH}}}]\label{theo:prop_mom} Assume that $-2\le \gamma<0$. Let a nonnegative initial datum $f_{\rm in} \in L^{1}_{s}  {\cap L\log L}(\R^{d})$ $(s>2)$ be given satisfying \eqref{eq:Mass} and consider any global weak solution $f(t,v)$ to \eqref{LFD} with initial datum $f_{\rm in}.$ Then there exists $\bm{C}_{s,\g} >0$ depending on $s,\g$, and $\|f_{\rm in}\|_{L^{1}_{2}}$ such that
$$\lm_{s}(t) \leq \lm_{s}(0)+ \bm{C}_{s,\g} t, \qquad t \geq 0\,.$$
\end{theo}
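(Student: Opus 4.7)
The plan is to derive a differential inequality for $\lm_s(t)$ from the weak formulation applied to the test function $\varphi(v)=\langle v\rangle^{s}$, and then integrate in time. First I would compute
$$
\nabla\varphi(v)=s\,\langle v\rangle^{s-2}\,v,\qquad D^{2}\varphi(v)=s\,\langle v\rangle^{s-2}\,\mathrm{Id}+s(s-2)\,\langle v\rangle^{s-4}\,v\otimes v,
$$
and insert these into the weak formulation of \eqref{LFD} given in the definition of weak solution (this requires a mild approximation step since $\varphi$ is not compactly supported, handled by truncation $\varphi_{R}=\chi_{R}\langle v\rangle^{s}$ together with the $L^{1}_{2}$ bound and Fatou as $R\to\infty$).

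Next, I would introduce the symmetrized integrand
$$
\Psi_{s}(v,\vet):=\tfrac{1}{2}a(v-\vet):\big[D^{2}\varphi(v)+D^{2}\varphi(\vet)\big]+b(v-\vet)\cdot\big[\nabla\varphi(v)-\nabla\varphi(\vet)\big],
$$
so that $\dfrac{\d}{\d t}\lm_{s}(t)=\displaystyle\int\!\!\int f(v)\,f(\vet)\,\Psi_{s}(v,\vet)\,\d v\,\d\vet$. Using $\Pi(z)z=0$ one immediately sees $a(v-\vet):\mathrm{Id}=(d-1)|v-\vet|^{\g+2}$ and $a(v-\vet):(v\otimes v)=|v-\vet|^{\g+2}\bigl(|v|^{2}-((v\cdot(v-\vet))/|v-\vet|)^{2}\bigr)\leq|v-\vet|^{\g+2}|v|^{2}$, so the Hessian contribution is bounded pointwise by $C_{s}(1+|v|^{s-2}+|\vet|^{s-2})|v-\vet|^{\g+2}$. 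For the drift part one exploits the antisymmetry of $\nabla\varphi(v)-\nabla\varphi(\vet)$: by the mean-value theorem along the segment joining $v$ and $\vet$, this difference is bounded by $|v-\vet|\sup\|D^{2}\varphi\|$ restricted to that segment, which recovers an extra factor $|v-\vet|$ that kills the $|v-\vet|^{\g}$ singularity of $b$, turning the integrand into something of order $|v-\vet|^{\g+1}\cdot|v-\vet|=|v-\vet|^{\g+2}$ times a polynomial of degree $s-2$ in the velocities. Since $-2\le\g<0$, we have $\g+2\in[0,2)$, so $|v-\vet|^{\g+2}\le C(\langle v\rangle^{\g+2}+\langle\vet\rangle^{\g+2})\le C(\langle v\rangle^{2}+\langle\vet\rangle^{2})$.

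Combining these estimates gives a pointwise bound of the form
$$
|\Psi_{s}(v,\vet)|\leq C_{s,\g}\Bigl[\langle v\rangle^{s-2}\langle\vet\rangle^{2}+\langle\vet\rangle^{s-2}\langle v\rangle^{2}+\langle v\rangle^{s-2}+\langle\vet\rangle^{s-2}\Bigr].
$$
Integrating against $f(v)f(\vet)$ and interpolating between $\lm_{2}$ and $\lm_{s}$ via H\"older,
$$
\lm_{s-2}(t)\le\lm_{2}(t)^{\frac{2}{s-2}}\lm_{s}(t)^{\frac{s-4}{s-2}}\quad\text{(if }s>4\text{), or directly }\lm_{s-2}\le\lm_{2}\text{ otherwise},
$$
yields (after a Young's inequality that absorbs the sublinear power of $\lm_{s}(t)$ on the right) an estimate
$$
\dfrac{\d}{\d t}\lm_{s}(t)\leq C_{s,\g}\,\lm_{2}(t)^{2}=C_{s,\g}\,\|f_{\mathrm{in}}\|_{L^{1}_{2}}^{2},
$$
by conservation of energy. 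Integration in time produces the announced linear-in-$t$ bound, and the claim follows with $\bm{C}_{s,\g}$ proportional to $\|f_{\mathrm{in}}\|_{L^{1}_{2}}^{2}$.

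The main obstacle is the critical case $\g=-2$, where $b(v-\vet)\sim|v-\vet|^{-2}(v-\vet)$ is genuinely non-integrable near the diagonal. The trick is that the drift contribution appears only through the \emph{difference} $\nabla\varphi(v)-\nabla\varphi(\vet)$, which via Taylor expansion provides exactly one extra power of $|v-\vet|$, reducing the singularity to the harmless $|v-\vet|^{\g+1}\cdot|v-\vet|=|v-\vet|^{0}$ on short scales; on long scales one uses the polynomial weight directly. A secondary care must be taken with the rigorous approximation argument, since $\langle v\rangle^{s}$ is unbounded; this is standard and relies on propagation of some strictly larger moment along regularised solutions, passing to the limit by the weak lower semicontinuity of $\lm_{s}$.
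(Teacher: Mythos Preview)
Your argument has two genuine gaps. First, the pointwise bound $|\Psi_{s}(v,\vet)|\leq C_{s,\g}\bigl[\langle v\rangle^{s-2}\langle\vet\rangle^{2}+\langle\vet\rangle^{s-2}\langle v\rangle^{2}+\cdots\bigr]$ fails when $-2<\g<0$. Take $\vet=0$: since $a(v):(v\otimes v)=0$, a direct computation gives $\Psi_{s}(v,0)=-\tfrac{s(d-1)}{2}|v|^{\g+2}\langle v\rangle^{s-2}+\tfrac{s(d-1)}{2}|v|^{\g+2}$, so $|\Psi_{s}(v,0)|\sim |v|^{s+\g}$ for large $|v|$; as $s+\g>s-2$, this exceeds your claimed bound whenever $s>2+|\g|$. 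The reason is that both your Hessian and drift estimates produce a factor $|v-\vet|^{\g+2}\bigl(\langle v\rangle^{s-2}+\langle\vet\rangle^{s-2}\bigr)$, and bounding $|v-\vet|^{\g+2}\leq C(\langle v\rangle^{2}+\langle\vet\rangle^{2})$ inevitably generates a term of order $\langle v\rangle^{s+\g}$, not $\langle v\rangle^{s-2}\langle\vet\rangle^{2}$.

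Second, and more fundamentally, even granting $\tfrac{\d}{\d t}\lm_{s}\leq C\,\lm_{s-2}$ (which \emph{is} correct for $\g=-2$), your ``Young's inequality absorbs the sublinear power'' step cannot produce a constant right-hand side: there is no negative term on the left to absorb into. From $\tfrac{\d}{\d t}\lm_{s}\leq C\,\lm_{s}^{(s-2)/s}$ one obtains only $\lm_{s}(t)\lesssim(1+t)^{s/2}$, and writing $\lm_{s}^{\beta}\leq \beta\lm_{s}+(1-\beta)$ via concavity leads, after Gronwall, to exponential growth. The linear bound hinges on the \emph{sign} of the leading contribution: symmetrising in $(v,\vet)$ turns the first-order piece into
\[
-s\int_{\R^{d}\times\R^{d}} f\,f_{*}\,|v-\vet|^{\g}\bigl(\langle v\rangle^{2}-\langle\vet\rangle^{2}\bigr)\bigl(\langle v\rangle^{s-2}-\langle\vet\rangle^{s-2}\bigr)\,\d v\,\d\vet\ \leq\ 0,
\]
since both factors share the same sign. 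This is exactly the mechanism used in the proof of Corollary~\ref{cor:epsilon_poincare}. A quantitative version of this negativity (a coercive term $-\eta_{*}\lm_{s+\g}$) is what then allows Young's inequality to legitimately absorb the remainder $\lm_{s-2}$ when $\g>-2$. Bounding $|\Psi_{s}|$ in absolute value throws away precisely the structure that makes linear growth hold.
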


\section{Elements of Lorentz spaces}\label{sec:Lorentz}

We collect here elementary properties of Lorentz spaces that are used in the core of the paper.  A main  reference for the results is \cite{grafakos}. Let $(X,\mathcal{F},\mu)$ be a given measure space. In practice, we consider the case $X=\R^{d}$ endowed with the Borel $\sigma$-algebra and the Lebesgue measure. For a measurable mapping
$$f\::\:X \to \R$$
define the distribution
$${d}_{f}(s):=\mu\left(\left\{x \in \R^{d}\;;\;|f(x)| > s\right\}\right), \qquad s \in \R^{+}.$$
We introduce then the \emph{decreasing rearrangement of} $f$ as the function 
$$f^{\ast}\::\:\R^{+}\to\R^{+}$$
defined as
$$f^{\ast}(t):=\inf\left\{s >0\;;\;d_{f}(s) \leq t\right\}, \qquad t \geq 0.$$
One has $f^{\ast}$ is nonincreasing and supported in $[0,\mu(X))$. Moreover, 
$$\left(|f|^{p}\right)^{\ast}=\left(f^{\ast}\right)^{p}, \qquad 0 < p < \infty$$
and
$$\|f\|_{L^{p}}^{p}=\int_{0}^{\infty}\left(f^{\ast}\right)^{p}(t)\d t, \qquad \|f\|_{L^{\infty}}=f^{\ast}(0).$$

\begin{defi} Let $f\::\:X \to \R$ be measurable and $1\leq p,q \leq \infty$. One defines
\begin{equation}\label{lorentz}
\|f\|_{p,q}:=\begin{cases} \left(\displaystyle\int_{0}^{\infty}\left(t^{\frac{1}{p}}f^{\ast}(t)\right)^{q}\dfrac{\d t}{t}\right)^{\frac{1}{q}}, \quad \qquad &q < \infty\,,\\
\\
\sup_{t >0}t^{\frac{1}{p}}f^{\ast}(t), \quad \qquad &q=\infty.\end{cases}
\end{equation}
The Lorentz space $L^{p,q}(X,\mu)$ is the space of all measurable $f\::\:X \to \R$ for which $\|f\|_{p,q} <\infty.$
\end{defi}
\begin{rmq} One has
$$\left\|\,|f|^{r}\right\|_{p,q}=\left\|f\right\|_{pr,qr}^{r}, \qquad r >0.$$
Moreover,  $\|f\|_{p,q}=0$ if and only if $f=0$ $\mu$-a.e. on $X$ and it can be shown that
\begin{equation}\label{eq:ppqq}
\|f\|_{p,q}^{q}=p\int_{0}^{\infty}\left[s\,d_{f}(s)^{\frac{1}{p}}\right]^{q}\dfrac{\d s}{s}.\end{equation}
If $X=\R^{d}$ is endowed with the Borel $\sigma$-algebra and the Lebesgue measure, we simply denote the corresponding $L^{p,q}(X,\mu)$ space as  $L^{p,q}(\R^{d})$.
\end{rmq} 
\begin{rmq} Notice that, for $1 \leq p \leq \infty$ and $1 \leq q < \infty$, the space $\left(L^{p,q}(X,\mu),\|\cdot\|_{p,q}\right)$ is then a quasi-Banach space, i.e. it is complete for the \emph{quasi-norm} $\|\cdot\|_{p,q}$.  Moreover, 
$$L^{p,p}(X,\mu)=L^{p}(X,\mu),\qquad \|f\|_{L^{p}}=\|f\|_{p,p}, \qquad \forall f \in L^{p}(X,\mu).$$
\end{rmq}

A version of H\"older inequality is known to hold in Lorentz spaces.
\begin{prop} Let $1 < p < \infty$ and $1 \leq q \leq \infty$. We define $p',q'$ by $\frac{1}{q}+\frac{1}{q'}=1,$ $\frac{1}{p}+\frac{1}{p'}=1$. If $f \in L^{p,q}(X,\mu)$ and $g \in L^{p',q'}(X,\mu)$ then $fg \in L^{1}(X,\mu)$ and
$$\left|\int_{X}fg\,\d\mu\right| \leq \|f\|_{p,q}\|g\|_{p',q'}\,.$$
Moreover, if $1 \leq p_{1} < p < p_{2} < \infty$,
\begin{equation}\label{eq:interpopq}\|f\|_{p,q} \leq \frac{p^{\frac{1}{q}}}{p_{1}^{\theta}p_{2}^{1-\theta}}\|f\|_{p_{1},q}^{\theta}\,\|f\|_{p_{2},q}^{1-\theta}, \qquad \tfrac{1}{p}=\tfrac{\theta}{p_{1}}+\tfrac{1-\theta}{p_{2}}.\end{equation}
\end{prop}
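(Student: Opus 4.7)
The proposition bundles together two distinct (but parallel) statements: the generalized Hölder inequality and the interpolation inequality \eqref{eq:interpopq}. Both rest on the same two classical ingredients: (i) the Hardy–Littlewood rearrangement inequality
$$\int_{X}|fg|\,\d\mu \;\leq\;\int_{0}^{\infty} f^{\ast}(t)\,g^{\ast}(t)\,\d t,$$
which is obtained from the layer–cake identity $f^{\ast}(t)=\int_{0}^{\infty}\mathbf{1}_{\{s<f^{\ast}(t)\}}\,\d s$, Fubini, and the relation $\{t : f^{\ast}(t)>s\}=[0,d_{f}(s))$; and (ii) the usual scalar Hölder inequality applied on $(0,\infty)$ with respect to the homogeneous measure $\d t/t$.

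For the Hölder part, I would first assume without loss of generality that $f,g \geq 0$ and bound $\int fg\,\d\mu$ by $\int_{0}^{\infty} f^{\ast}(t)g^{\ast}(t)\,\d t$ using (i). Then I would split the exponent through $1=\tfrac{1}{p}+\tfrac{1}{p'}$ to rewrite
$$\int_{0}^{\infty} f^{\ast}(t)g^{\ast}(t)\,\d t = \int_{0}^{\infty}\bigl(t^{1/p}f^{\ast}(t)\bigr)\bigl(t^{1/p'}g^{\ast}(t)\bigr)\dfrac{\d t}{t},$$
and apply scalar Hölder in $L^{q}(\d t/t)$ versus $L^{q'}(\d t/t)$. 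This produces exactly $\|f\|_{p,q}\,\|g\|_{p',q'}$ after recognizing the two factors through the definition \eqref{lorentz}. The endpoint cases $q=\infty$ (so $q'=1$) or $q=1$ are handled by the obvious pointwise bound $t^{1/p'}g^{\ast}(t)\leq\|g\|_{p',\infty}$ inside the integral.

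For \eqref{eq:interpopq}, the key algebraic identity is that $\tfrac{1}{p}=\tfrac{\theta}{p_{1}}+\tfrac{1-\theta}{p_{2}}$ allows the factorization
$$t^{1/p}f^{\ast}(t)=\bigl(t^{1/p_{1}}f^{\ast}(t)\bigr)^{\theta}\bigl(t^{1/p_{2}}f^{\ast}(t)\bigr)^{1-\theta}.$$
Raising to the $q$-th power, integrating against $\d t/t$, and applying Hölder with conjugate exponents $1/\theta$ and $1/(1-\theta)$, I get
$$\|f\|_{p,q}^{q}\leq\Bigl(\int_{0}^{\infty}\!\bigl(t^{1/p_{1}}f^{\ast}(t)\bigr)^{q}\dfrac{\d t}{t}\Bigr)^{\theta}\Bigl(\int_{0}^{\infty}\!\bigl(t^{1/p_{2}}f^{\ast}(t)\bigr)^{q}\dfrac{\d t}{t}\Bigr)^{1-\theta} = \|f\|_{p_{1},q}^{\theta q}\,\|f\|_{p_{2},q}^{(1-\theta)q}.$$
Taking the $q$-th root yields the inequality up to constants. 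To recover the exact prefactor $p^{1/q}/(p_{1}^{\theta}p_{2}^{1-\theta})$ stated in the proposition, one re-expresses the three norms through the distribution-function formulation \eqref{eq:ppqq} before performing the split, which introduces the three numerical factors $p^{1/q}$, $p_{1}^{1/q}$ and $p_{2}^{1/q}$; the combination $p_{1}^{\theta} p_{2}^{1-\theta}$ arises naturally (rather than $p_{1}^{\theta/q}p_{2}^{(1-\theta)/q}$) by applying $\log$-convexity of $p\mapsto p^{1/q}$ on the exponents $p_{1},p_{2}$.

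\textbf{Main obstacle.} The only genuinely delicate point is the Hardy–Littlewood inequality: although essentially elementary, it requires care because $f$ and $g$ are defined on $X$ whereas $f^{\ast},g^{\ast}$ live on $(0,\infty)$, and one must verify that the layer–cake / Fubini manipulation is rigorous when $\mu(X)$ may be infinite. Everything else is a bookkeeping exercise in applying scalar Hölder on $(0,\infty,\d t/t)$. The endpoint $q=\infty$ interpolation needs a separate (but easier) monotonicity argument, since the integral form of the norm degenerates.
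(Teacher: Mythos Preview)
The paper does not supply its own proof of this proposition: it sits in Appendix~\ref{sec:Lorentz} as part of a list of standard facts about Lorentz spaces, with \cite{grafakos} cited as the reference. Your approach via the Hardy--Littlewood rearrangement inequality followed by scalar H\"older on $(0,\infty,\d t/t)$ is the standard one and is correct for both parts.

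One point deserves correction. Your claim that the exact prefactor $p^{1/q}/(p_{1}^{\theta}p_{2}^{1-\theta})$ is recovered ``by applying $\log$-convexity of $p\mapsto p^{1/q}$'' does not hold up. Working directly from the distribution formulation \eqref{eq:ppqq} and applying H\"older with exponents $1/\theta,\,1/(1-\theta)$ exactly as you outline gives
$$\frac{1}{p}\|f\|_{p,q}^{q}\;\leq\;\Bigl(\frac{1}{p_{1}}\|f\|_{p_{1},q}^{q}\Bigr)^{\theta}\Bigl(\frac{1}{p_{2}}\|f\|_{p_{2},q}^{q}\Bigr)^{1-\theta},$$
i.e. $\|f\|_{p,q}\leq p^{1/q}(p_{1}^{\theta}p_{2}^{1-\theta})^{-1/q}\|f\|_{p_{1},q}^{\theta}\|f\|_{p_{2},q}^{1-\theta}$. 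For $q>1$ and $p_{1},p_{2}\geq 1$ one has $(p_{1}^{\theta}p_{2}^{1-\theta})^{1/q}\leq p_{1}^{\theta}p_{2}^{1-\theta}$, so the constant you actually obtain is \emph{larger} than the one printed; no convexity argument can improve this in the direction you need. This discrepancy is immaterial for the paper's purposes---in the proof of Proposition~\ref{prop:GG} only \emph{some} finite constant depending on $d,\g$ is required---and most likely reflects a typographical slip in the stated prefactor (the exponent $1/q$ missing in the denominator) rather than a defect in your argument.
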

We also recall the following refined version of Sobolev inequality. 
\begin{theo}\label{theo:Sob} For $d \geq 3$ and $1 \leq q < d$, there exists $C_{d,q} >0$ such that
$$\|f\|_{q^{*},q} \leq C_{d,q}\,\|\nabla f\|_{L^{q}}, \qquad q^{*}=\frac{qd}{d-q}\,,$$
for any compactly supported function $f\::\R^{d}\to\R$ and where $\|\cdot\|_{q^{*},q}$ denotes the (quasi)-norm on $L^{q^{*},q}(\R^{d})$.\end{theo}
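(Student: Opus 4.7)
The plan is to reduce the inequality, via symmetrization, to a classical one-dimensional weighted Hardy inequality. Since the Lorentz quasi-norm $\|\cdot\|_{q^{\ast},q}$ depends only on the distribution function of $f$ (recall \eqref{eq:ppqq}), the symmetric decreasing rearrangement $f^{\sharp}$ of $|f|$ satisfies $\|f^{\sharp}\|_{q^{\ast},q}=\|f\|_{q^{\ast},q}$. Combined with the classical P\'olya--Szeg\H{o} inequality $\|\nabla f^{\sharp}\|_{L^{q}}\le \|\nabla f\|_{L^{q}}$, this reduces the problem to the case of radial, nonincreasing $f$, which we write as $f(x)=F(|x|)$ with $F:[0,\infty)\to[0,\infty)$ nonincreasing and compactly supported.

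Next, I would translate both sides into one-dimensional weighted integrals. Denoting by $\omega_{d}$ the Lebesgue measure of the unit ball in $\R^{d}$, one has $f^{\ast}(t)=F\bigl((t/\omega_{d})^{1/d}\bigr)$, and the change of variables $t=\omega_{d}r^{d}$ in \eqref{lorentz}, together with $q/q^{\ast}=(d-q)/d$, gives
\begin{equation*}
\|f\|_{q^{\ast},q}^{q}=d\,\omega_{d}^{q/q^{\ast}}\int_{0}^{\infty}F(r)^{q}\,r^{d-q-1}\,\d r,
\end{equation*}
while integration in spherical coordinates yields
\begin{equation*}
\|\nabla f\|_{L^{q}}^{q}=d\,\omega_{d}\int_{0}^{\infty}|F'(r)|^{q}\,r^{d-1}\,\d r.
\end{equation*}
The claim is therefore equivalent to the one-dimensional inequality
\begin{equation*}
\int_{0}^{\infty}F(r)^{q}\,r^{d-q-1}\,\d r \le C(d,q)\int_{0}^{\infty}|F'(r)|^{q}\,r^{d-1}\,\d r.
\end{equation*}

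Since $F$ is nonincreasing with compact support, $F(r)=\int_{r}^{\infty}|F'(s)|\,\d s$, so this is a Hardy-type inequality for the operator $g\mapsto\int_{r}^{\infty}g(s)\,\d s$ with weights $u(r)=r^{d-q-1}$ and $v(r)=r^{d-1}$. For $q>1$, one invokes Muckenhoupt's classical criterion, which reduces to checking
\begin{equation*}
\sup_{R>0}\Bigl(\int_{0}^{R}r^{d-q-1}\,\d r\Bigr)^{1/q}\Bigl(\int_{R}^{\infty}r^{(d-1)(1-q')}\,\d r\Bigr)^{1/q'}<\infty.
\end{equation*}
A direct computation using $1/q+1/q'=1$ shows that both integrals are finite exactly when $q<d$, and that the resulting powers of $R$ cancel, so the supremum is finite with an explicit value depending only on $d$ and $q$. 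The endpoint $q=1$ is handled directly by Fubini's theorem, which yields $\int_{0}^{\infty}\bigl(\int_{r}^{\infty}|F'(s)|\,\d s\bigr)r^{d-2}\,\d r=\tfrac{1}{d-1}\int_{0}^{\infty}|F'(s)|\,s^{d-1}\,\d s$.

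The main obstacle is the P\'olya--Szeg\H{o} step: one must ensure $f^{\sharp}$ is regular enough for $\|\nabla f^{\sharp}\|_{L^{q}}$ to make sense and satisfy the rearrangement inequality. This is classical for Lipschitz (or smooth) compactly supported $f$, see e.g.\ \cite{grafakos}; once this is granted, the remainder of the argument is a routine change of variables and an application of the weighted Hardy inequality, yielding a constant $C_{d,q}$ depending only on the dimension, on $q$, and on the Muckenhoupt characteristic of the pair $(u,v)$.
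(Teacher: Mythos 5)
Your proof is correct. Note that the paper itself does not prove Theorem \ref{theo:Sob}: it is stated in Appendix \ref{sec:Lorentz} as a recalled classical result with a pointer to \cite{grafakos}, where the Lorentz improvement of the Sobolev embedding is usually obtained by a different route (the weak-type bound for the Riesz potential together with real interpolation, or O'Neil-type rearrangement inequalities). Your argument is the other standard proof, in the spirit of Alvino and Talenti: since $\|\cdot\|_{q^{*},q}$ depends only on the distribution function, symmetric decreasing rearrangement leaves the left-hand side unchanged while P\'olya--Szeg\H{o} decreases the right-hand side, and the radial case reduces, after the change of variables $t=\omega_{d}r^{d}$ (your exponent bookkeeping $dq/q^{*}-1=d-q-1$ is right), to a weighted Hardy inequality for $g\mapsto\int_{r}^{\infty}g$ whose Muckenhoupt characteristic you verify is finite and scale-invariant precisely for $q<d$, with the $q=1$ endpoint done by Fubini. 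The only points to make fully explicit are routine: replace $f$ by $|f|$ at the outset (using $\|\nabla|f|\|_{L^{q}}=\|\nabla f\|_{L^{q}}$ a.e.), justify the absolute continuity of the radial profile $F$ so that $F(r)=\int_{r}^{\infty}|F'(s)|\,\d s$ (automatic for Lipschitz $f$, and the general case follows by density), and cite P\'olya--Szeg\H{o} from, say, \cite{LL} rather than \cite{grafakos}. Your approach has the advantage of being elementary and of producing an explicit constant $C_{d,q}$; the interpolation route is shorter if one already has the machinery of Lorentz interpolation at hand, which the paper in fact uses elsewhere.
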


\end{document}